\newtheorem{thm}{Theorem}
\newtheorem{lem}{Lemma}
\newtheorem{cor}{Corollary}
\newtheorem{conj}{Conjecture}
\newtheorem{qu}{Question}
\theoremstyle{definition}
\theoremstyle{definition}
\theoremstyle{remark}
\newtheorem*{remark}{Remark}
\newcommand{\proj}{\textrm{\normalfont{proj}}}
\newcommand{\ball}{\mathcal{B}}
\newcommand{\scap}{\textrm{\normalfont{cap}}}
\newcommand{\Scap}{\textrm{\normalfont{Cap}}}
\newcommand{\SO}{\textrm{\normalfont{O}}}
\newcommand{\Stab}{\textrm{\normalfont{Stab}}}
\newcommand{\diam}{\textrm{\normalfont{diam}}\,}
\newcommand{\supp}{\textrm{\normalfont{supp}}\,}
\newcommand{\m}{\mathbf{m}}
\newcommand{\meas}{\textrm{\normalfont{vol}}}
\newcommand{\dist}{\textrm{\normalfont{dist}}}
\newcommand{\Qcal}{\mathcal{Q}}
\newcommand{\R}{\mathbb{R}}
\renewcommand{\S}{\mathbb{S}}
\title{Geometrical sets with forbidden configurations}
\author{Davi Castro-Silva}
\date{\today}
\newcommand{\Addresses}{{
    \bigskip
    \small
    
    Davi de Castro Silva, \textsc{CWI \& QuSoft, Science Park 123, 1098 XG Amsterdam, The Netherlands}\par\nopagebreak
    \textit{E-mail address:} \texttt{davisilva15@gmail.com}
}}
\begin{document}

\maketitle

\begin{abstract}
    Given finite configurations $P_1, \dots, P_n \subset \R^d$, let us denote by $\m_{\R^d}(P_1, \dots, P_n)$ the maximum density a set $A \subseteq \R^d$ can have without containing congruent copies of any $P_i$.
    We will initiate the study of this geometrical parameter, called the \emph{independence density} of the considered configurations, and give several results we believe are interesting.
    For instance we show that, under suitable size and non-degeneracy conditions, $\m_{\R^d}(t_1 P_1, t_2 P_2, \dots, t_n P_n)$
    progressively `untangles' and tends to $\prod_{i=1}^n \m_{\R^d}(P_i)$ as the ratios $t_{i+1}/t_i$ between consecutive dilation parameters grow large;
    this shows an exponential decay on the density when forbidding multiple dilates of a given configuration, and gives a common generalization of theorems by Bourgain and by Bukh in geometric Ramsey theory.
    We also consider the analogous parameter $\m_{\S^d}(P_1, \dots, P_n)$ in the more complicated framework of sets on the unit sphere $\S^d$, obtaining the corresponding results in this setting.
\end{abstract}

\section{Introduction}

The general problem we consider in this paper can be phrased by the following question:
how large can a set be if it does not contain a given geometrical configuration?

The simplest and most well-studied instance of this problem concerns forbidden configurations of only two points on $\R^d$,
which are then characterized by their distance;
since there clearly exist unbounded sets on $\R^d$ which do not span a given distance, the appropriate notion of `largeness' must take into account their density rather than their cardinality or measure.
Define the \emph{upper density} $\overline{d}(A)$ of a measurable set $A \subseteq \R^d$ by
$$\overline{d}(A) = \limsup_{T \rightarrow \infty} \frac{\meas(A \cap [-T,\, T]^d)}{\meas([-T,\, T]^d)},$$
where $\meas$ denote the Lebesgue measure.
Our general problem in this case becomes:
what is the maximum upper density that a subset of $\R^d$ can have if it does not contain pairs of points at distance $1$?\footnote{Note that this problem is dilation invariant, so there is no loss of generality in assuming the forbidden distance to be $1$.}

This extremal density is commonly denoted $m_1(\R^d)$, and it is associated to the measurable chromatic number\footnote{The measurable chromatic number of $\R^d$ is the minimum number of measurable sets needed to partition $\R^d$ so that no two points belonging to the same part are at distance $1$ from each other.}
$\chi_m(\R^d)$ of the Euclidean space by the simple inequality $\chi_m(\R^d) \geq 1/m_1(\R^d)$.
Indeed, if no colour class contains pairs of points at unit distance, then each of them has upper density at most $m_1(\R^d)$, and it takes at least $1/m_1(\R^d)$ such classes to cover the whole space.
The parameter $m_1(\R^d)$ is many times studied in the context of providing lower bounds for the measurable chromatic number.

Despite significant research on the subject, there is still no dimension $d \geq 2$ for which the value of $m_1(\R^d)$ is known.
As far back as 1982, Erd\H{o}s~\cite{ErdosProblems} conjectured that $m_1(\R^2) < 1/4$, implying that any measurable planar set covering one fourth of the Euclidean plane contains pairs of points at unit distance;
this conjecture is still open.
A celebrated theorem of Frankl and Wilson~\cite{IntersectionTheorems} implies that $m_1(\R^d)$ decays exponentially with the dimension, and obtains the asymptotic upper bound $m_1(\R^d) \leq (1.2 + o(1))^{-d}$.
We refer the reader to Bachoc, Passuello and Thiery~\cite{densityBachoc} and to DeCorte, Oliveira and Vallentin~\cite{completelyPositive} for the best known bounds on $m_1(\R^d)$ and $\chi_m(\R^d)$.

The situation becomes even more complex and interesting when one forbids multiple distances $r_1, \dots, r_n > 0$;
let us denote by $\m_{\R^d}(r_1, \dots, r_n)$ the maximum upper density of a set in $\R^d$ avoiding all of these distances.
This parameter was first studied by Sz\'ekely~\cite{RemarksChromatic, Szekely} in connection with the chromatic number of geometric graphs, and it depends not only on the dimension of the space and number of forbidden distances but also on how these distances relate to each other.

In his first paper, Sz\'ekely pondered on the connection between the structure of a set of forbidden distances and the maximum density of a set in Euclidean space which avoids them all, and conjectured that $\m_{\R^2}\big((r_j)_{j \geq 1}\big) = 0$ whenever the sequence $(r_j)_{j \geq 1}$ of forbidden distances is unbounded.
His conjecture was proven by Furstenberg, Katznelson and Weiss~\cite{FurstKatzWeiss} using methods from ergodic theory, who obtained the following result:

\begin{thm}
If $A \subseteq \R^2$ has positive upper density, then there is some number $t_0$ such that for any $t \geq t_0$ one can find a pair of points $x, y \in A$ with $\|x - y\| = t$.
\end{thm}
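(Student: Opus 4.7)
My plan is ergodic-theoretic. First I would invoke a Furstenberg-type correspondence principle to pass from $A \subseteq \mathbb{R}^2$, of positive upper density $\delta$, to a measure-preserving action $(T_v)_{v \in \mathbb{R}^2}$ of the translation group $\mathbb{R}^2$ on a probability space $(X, \mathcal{B}, \mu)$, together with a measurable set $B \subseteq X$ satisfying $\mu(B) = \delta$, such that whenever $\mu\bigl(B \cap T_{v}^{-1} B\bigr) > 0$ for some $v \in \mathbb{R}^2$, the set $A$ must contain two points $x, y$ with $y - x = v$. The theorem then reduces to showing that for every sufficiently large $l$, there exists at least one direction $\theta \in S^1$ with $\mu\bigl(B \cap T_{l\theta}^{-1} B\bigr) > 0$.

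To produce such a $\theta$, I would study the spherical-average integral
\[
I(l) \;=\; \int_{S^1} \mu\bigl(B \cap T_{l\theta}^{-1} B\bigr)\, d\sigma(\theta) \;=\; \langle P_l f, f\rangle_{L^2(\mu)},
\]
where $f = \mathbbm{1}_B$ and $P_l f(x) = \int_{S^1} f(T_{l\theta} x)\, d\sigma(\theta)$ is the circle-averaging operator of radius $l$. The central claim is that $\liminf_{l \to \infty} I(l) \geq \delta^2$, which immediately forces $I(l) > 0$ for $l$ large and hence yields the required pair. To prove this I would use the spectral theorem for the unitary $\mathbb{R}^2$-representation $v \mapsto T_v$ on $L^2(X,\mu)$: it produces, for each $f \in L^2$, a finite spectral measure $\mu_f$ on $\mathbb{R}^2$ with $\langle T_v f, f\rangle = \int_{\mathbb{R}^2} e^{2\pi i \langle v, \xi\rangle}\, d\mu_f(\xi)$. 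Integrating over $\theta$ then gives
\[
I(l) \;=\; \int_{\mathbb{R}^2} \hat{\sigma}(l\xi)\, d\mu_f(\xi),
\]
where $\hat{\sigma}(\eta) = J_0(2\pi|\eta|)$ is the Bessel-function transform of uniform measure on the unit circle. Splitting the integration domain as $\{0\} \sqcup (\mathbb{R}^2 \setminus \{0\})$, the atom at $0$ equals $\|E_0 f\|^2$, where $E_0$ is the projection onto the translation-invariant subspace; conditioning on the invariant $\sigma$-algebra and applying Cauchy--Schwarz shows this is at least $(\int f\,d\mu)^2 = \delta^2$. Meanwhile on $\mathbb{R}^2 \setminus \{0\}$ the classical decay $J_0(r) \to 0$ as $r \to \infty$, together with dominated convergence, forces the remaining contribution to vanish as $l \to \infty$. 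Combining these two facts yields $\liminf_l I(l) \geq \delta^2 > 0$, which finishes the argument.

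The main obstacle is technical rather than conceptual: one has to implement the Furstenberg correspondence cleanly in the continuous-group setting (which typically proceeds through weak-$\ast$ limits of translates of a suitably mollified $\mathbbm{1}_A$, ensuring joint measurability of the resulting $\mathbb{R}^2$-action) and one must verify the spectral representation above for this action. A pleasant feature of the plan is that no ergodicity of the limiting system is needed, since the invariant component automatically carries mass at least $\delta^2$. The harmonic-analytic input, namely the decay of $\hat{\sigma}$ away from the origin, is precisely the two-dimensional manifestation of the fact that the unit sphere has nonvanishing Gaussian curvature, and it is here that the two-dimensionality of $\mathbb{R}^2$ enters the argument only superficially: the same strategy should extend verbatim to $\mathbb{R}^d$ for $d \geq 2$.
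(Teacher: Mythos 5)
Your proposal is, in essence, the original ergodic-theoretic argument of Furstenberg, Katznelson and Weiss, which is exactly the proof the paper cites (but does not reproduce) for this theorem. The outline is sound: the spectral decomposition $\mu_f = \mu_f(\{0\})\delta_0 + (\mu_f - \mu_f(\{0\})\delta_0)$, the identification of the atom at $0$ with $\|E_0 f\|_2^2 \geq (\int f\,d\mu)^2 = \delta^2$ via conditional expectation on the invariant $\sigma$-algebra and Cauchy--Schwarz, and the use of the classical decay $J_0(r) \to 0$ together with dominated convergence (legitimate since $|J_0| \leq 1$ and $\mu_f$ is finite) do indeed give $\lim_{l\to\infty} I(l) = \mu_f(\{0\}) \geq \delta^2$, forcing $I(l)>0$ for all large $l$. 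The one genuinely delicate step, which you correctly flag, is the correspondence principle for the continuous group $\mathbb{R}^2$: one needs $\overline{d}\bigl(A \cap (A-v)\bigr) \geq \mu(B \cap T_v^{-1}B)$ for \emph{every} $v$, not merely almost every $v$, so that positivity transfers back to an actual pair of points of $A$. This requires care with joint measurability and with the choice of Borel model for the limiting system, but it is known to work and your mollification-plus-weak$^*$-limit strategy is the standard way in.

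What the paper does is quite different. Rather than passing to a measure-preserving system, it works directly with the set $A$ inside large cubes and develops two tools: a Fourier-analytic \emph{Counting Lemma} (Lemma~\ref{lem:counting_space}), which controls $|I_P(A) - I_P(A*\mathcal{Q}_\delta)|$ using the same decay of $\widehat{\sigma}$ that powers your Bessel-function step, and a functional-analytic \emph{Supersaturation Theorem} (Theorem~\ref{thm:supersat_space}), proved by weak$^*$ compactness of the unit ball of $L^\infty$. These feed into the asymptotic-independence Theorem~\ref{thm:main_space}, of which Bourgain's Theorem~\ref{thmbourgain} and hence this statement are corollaries. The trade-off is clear: your route is shorter and conceptually transparent for the two-point case, and (as you note) extends immediately to distances in $\mathbb{R}^d$; the paper's route is heavier but yields uniform quantitative information (supersaturation, the exponential decay $\m_{\mathbb{R}^d}(P)^n$, continuity of the independence density, existence of extremizers) and handles $k$-point admissible configurations simultaneously, which the spectral decomposition at a single frequency does not directly do. In short: correct proof, same harmonic-analytic engine at the core, but the chassis is ergodic rather than combinatorial-Fourier, matching the cited FKW argument rather than the paper's own machinery.
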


Using Fourier analytic methods, Bourgain~\cite{Bourgain} was then able to generalize this theorem from two-point configurations on $\R^2$ to $d$-point configurations in general position on $\R^d$, for any $d \geq 2$.
For convenience, we shall say that a configuration $P \subset \R^d$ is \emph{admissible} if it has at most $d$ points and spans a $(|P|-1)$-dimensional affine hyperplane.
Bourgain showed:

\begin{thm} \label{thmbourgain}
Suppose $P \subset \R^d$ is admissible.
If $A \subseteq \R^d$ has positive upper density, then there is some number $t_0 > 0$ such that $A$ contains a congruent copy of $t \!\cdot\! P$ for all $t \geq t_0$.
\end{thm}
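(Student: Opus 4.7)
The plan is to produce a quantitative lower bound on the number of congruent copies of $l \cdot P$ contained in $A \cap B_R$ for suitably chosen balls $B_R$, and show that this count is strictly positive once $l$ is large enough. After translating, write $P = \{0, v_1, \dots, v_k\}$ with $v_1, \dots, v_k$ linearly independent and $k \leq d-1$; admissibility enters precisely through this inequality. For $l > 0$ and $f : \mathbb{R}^d \to [0,1]$ bounded and measurable, consider the multilinear counting form
\[
\Lambda_l(f) = \int_{\mathbb{R}^d}\int_{\mathrm{O}(d)} f(x)\prod_{i=1}^k f(x + l\rho v_i)\, d\rho\, dx,
\]
with $d\rho$ the normalized Haar measure on $\mathrm{O}(d)$. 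It suffices to produce a constant $c(P) > 0$ such that $\Lambda_l\bigl(1_{A \cap B_R}\bigr) \geq c(P)\,\overline{d}(A)^{k+1}\meas(B_R)$ whenever $l$ is large and $R = R(l)$ is chosen appropriately; any configuration contributing to this integral is the desired copy of $l \cdot P$.

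The analysis proceeds by disintegrating the Haar measure on $\mathrm{O}(d)$ according to the ordered images $\rho v_1, \rho v_2, \dots, \rho v_k$. Since $\rho$ preserves all inner products, once $\rho v_1, \dots, \rho v_{i-1}$ are fixed the vector $\rho v_i$ is constrained to a sphere of dimension $d - i$ and positive radius depending only on $P$; the hypothesis $k \leq d-1$ guarantees that every fiber sphere in this chain has dimension at least $1$. Under this disintegration $\Lambda_l(f)$ is expressed as a nested integral against normalized surface measures on spheres whose radii are proportional to $l$, the innermost having dimension $d-k \geq 1$. The decisive quantitative input is the stationary-phase estimate
\[
\bigl|\widehat{\sigma^{\,n}_r}(\xi)\bigr| \lesssim (r|\xi|)^{-n/2}
\]
for the normalized surface measure $\sigma^{\,n}_r$ on an $n$-sphere of radius $r$, valid whenever $n \geq 1$; this drives oscillatory contributions at frequencies $\gtrsim 1/l$ to zero as $l \to \infty$.

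I would argue by induction on $|P|$. The case $|P|=1$ is trivial, and $|P|=2$ follows from the classical Fourier proof of the theorem of Furstenberg-Katznelson-Weiss: decompose $1_{A \cap B_R} = \overline{d}(A)\cdot 1_{B_R} + g$, apply Plancherel, and use the pointwise decay $\widehat{\sigma_l}(\xi) \to 0$ to annihilate the $g$-contribution. For the inductive step, isolate the innermost factor $f(x + l\rho v_k)$ and split it the same way. The main term collapses the innermost integration and reduces the problem to the counting form for the sub-configuration $\{0, v_1, \dots, v_{k-1}\}$, which has one fewer point and is still admissible; the inductive hypothesis then yields a lower bound of order $\overline{d}(A)^{k+1}\meas(B_R)$. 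The error from $g$ is controlled by Cauchy-Schwarz on the outer factors combined with the Fourier estimate above, and tends to zero as $l \to \infty$. The principal obstacle is purely technical: bridging the infinite-volume density hypothesis on $A$ and the compactly-supported Fourier analysis. The localization radius $R = R(l)$ must be chosen so that $A \cap B_R$ genuinely has density near $\overline{d}(A)$, boundary effects on $\partial B_R$ remain negligible in the nested spherical convolutions, and the Fourier decay $(l|\xi|)^{-n/2}$ dominates the frequency content of $1_{A \cap B_R}$ down to scale $\sim 1/R$; balancing these requirements demands that $R$ grow faster than $l$, but not so fast that the uniform density estimates break down.
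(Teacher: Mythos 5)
The proposal sketches a direct single–scale Fourier argument in the spirit of Bourgain's original paper, but there is a genuine gap that the paper's proof is specifically designed to avoid. The stationary-phase estimate $|\widehat{\sigma^n_r}(\xi)| \lesssim (r|\xi|)^{-n/2}$ controls only the \emph{high-frequency} part of the error term, i.e.\ frequencies $|\xi| \gg 1/l$. For $|\xi| \lesssim 1/l$ the surface-measure Fourier transform can have modulus close to $1$ and can be negative, so the contribution from the low- and intermediate-frequency content of $g = 1_{A\cap B_R} - \overline{d}(A)\,1_{B_R}$ is not small — and that content depends on the fine structure of $A$, not merely on $\overline{d}(A)$. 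This is not a technical nuisance to be absorbed into the choice of $R(l)$: for any \emph{single} $l$ one can build sets $A$ of arbitrary prescribed density that avoid $l\cdot P$ (e.g.\ small balls on a lattice of spacing $\sim l$), so the asserted lower bound $\Lambda_l(1_{A\cap B_R}) \gtrsim \overline{d}(A)^{k+1}\meas(B_R)$ cannot hold with a threshold $l_0$ depending only on $\overline{d}(A)$ and $P$. The threshold must depend on $A$, and the Fourier estimate alone gives no handle on which scales are good for a given $A$. Closing the argument requires a multi-scale pigeonhole: one must show that the set of bad scales cannot contain an unbounded, rapidly separated sequence. The induction the proposal runs on $|P|$ does not substitute for this; the induction one actually needs is over the \emph{number of forbidden scales}.

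The paper handles this by packaging the Fourier input as a Counting Lemma (Lemma~\ref{lem:counting_space}), upgrading it to the Supersaturation Theorem (Theorem~\ref{thm:supersat_space}), and then proving Asymptotic Independence (Theorem~\ref{thm:main_space}) by an induction over $n$ dilation scales: if $A$ avoids $t_1 P, \dots, t_n P$ with widely separated $t_i$, then $\overline{d}(A) \le \m_{\mathbb{R}^d}(P)^n + o(1)$. Bourgain's theorem then drops out in two lines: a set avoiding $l_j P$ along an unbounded sequence can, after passing to a fast-growing subsequence, be shown to have upper density at most $2\m_{\mathbb{R}^d}(P)^n$ for every $n$, hence density zero. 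Your spherical disintegration and the dimension count $d-k\ge 1$ from admissibility are correct and indeed mirror the Counting Lemma, but the step from ``Fourier decay kills the high frequencies'' to ``a positive proportion of copies exists for all large $l$'' requires the supersaturation-plus-nested-scales machinery, and that is precisely what is missing from the proposal.
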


This result motivates the introduction of the \emph{independence density} of a given family of configurations $P_1, P_2, \dots, P_n \subset \R^d$, denoted $\m_{\R^d}(P_1,\, P_2,\, \dots,\, P_n)$, as the maximum upper density of a set in $\R^d$ which does not contain a congruent copy of any of these configurations.
This parameter generalizes our earlier notion of extremal density $\m_{\R^d}(r_1, \dots, r_n)$ from two-point to higher-order configurations, and can be seen as the natural analogue of the independence number\footnote{Given some finite hypergraph $H$, its independence number is the maximum size of a subset of vertices which does not entirely contain any edge.
Its independence density can then be defined as the independence number divided by the total number of vertices.}
for the (infinite) geometrical hypergraph on $\R^d$ whose edges are all isometric copies of $P_j$, $1 \leq j \leq n$.

With the notation now introduced, Bourgain's Theorem can be restated as the assertion that $\m_{\R^d}\big((t_j P)_{j \geq 1}\big) = 0$ for all admissible $P \subset \R^d$ and all unbounded positive sequences $(t_j)_{j \geq 1}$;
his proof in fact implies the stronger result that
$$\m_{\R^d}(t_1 P,\, t_2 P,\, \dots,\, t_n P) \rightarrow 0 \hspace{3mm} \text{as } n \rightarrow \infty$$
whenever the dilation parameters $t_j$ grow without bound.
Seen in this light, his results might inspire several further natural questions;
for instance:
\begin{itemize}
    \item[(Q1)] What is the rate of decay of $\m_{\R^d}(t_1 P,\, t_2 P,\, \dots,\, t_n P)$ with $n$ as the ratios $t_{j+1}/t_j$ between consecutive scales get large?
    \item[(Q2)] What possible values can be taken by the independence density
    $\m_{\R^d}(t_1 P,\, t_2 P,$ $\dots,\, t_n P)$
    of $n$ distinct dilates of a given configuration $P$?
    \item[(Q3)] Are there analogous results which are valid for other (non-Euclidean) spaces?
\end{itemize}
The goal of the present paper is to initiate the study of the independence density function $\m_{\R^d}$ and related geometrical parameters,
and the investigation of these three problems will serve as the driving force behind our analysis.

\subsection{Outline of the paper}

In Section~\ref{Space} we will formally define the independence density of a family of configurations, both in the entire space $\R^d$ and when restricted to bounded cubes in $\R^d$, and start our study of this geometrical parameter.
The methods we use are a mix of Fourier analysis, functional analysis and combinatorics.
The Fourier-analytic part is based mainly on Bourgain's arguments from~\cite{Bourgain}, and the combinatorial part is based on Bukh's arguments from~\cite{Bukh}
(where he considered similar problems to ours but concerning forbidden distances).
We do not assume that the reader is familiar with either of these papers, instead giving a presentation of the relevant parts of their reasoning that will be important to us.

The main tools to be used in this section will be a Counting Lemma (Lemma~\ref{lem:counting_space}) and a Supersaturation Theorem (Theorem~\ref{thm:supersat_space}), both of which are conceptually similar to results of the same name in graph and hypergraph theory (see~\cite{RegularityGraphs, quasirandomness, Supersaturation}).
Intuitively, the Counting Lemma says that the count of admissible configurations inside a given set does not significantly change if we blur the set a little;
this will be proven by Fourier-analytic methods.
The Supersaturation Theorem states that any bounded set $A \subseteq [-R,\, R]^d$, which is just slightly denser than the independence density of an admissible configuration $P$, must necessarily contain a positive proportion of all congruent copies of $P$ lying in $[-R,\, R]^d$;
this is proven by functional-analytic methods, via a compactness and weak$^*$ continuity argument.

We will then use these tools to obtain several results on the independence density parameter, and in particular answer questions (Q1) and (Q2) in the case where the considered configuration $P$ is admissible.
Regarding question (Q1), we show that $\m_{\R^d}(t_1 P,\, t_2 P,\, \dots,\, t_n P)$ tends to $\m_{\R^d}(P)^n$ as the ratios $t_{j+1}/t_j$ get large;
this generalizes a theorem of Bukh from two-point configurations to $k$-point configurations with $k \leq d$, and easily implies Bourgain's Theorem discussed in the Introduction.
As for question (Q2) we show that, by forbidding $n$ distinct dilates of such a configuration $P$, we can obtain as independence density any real number
strictly\footnote{Whether these boundary values can be attained is not yet clear.}
between $\m_{\R^d}(P)^n$ and $\m_{\R^d}(P)$, but none smaller than $\m_{\R^d}(P)^n$ or larger than $\m_{\R^d}(P)$.
We also prove:
\begin{itemize}
    \item[-] The general lower bound $\m_{\R^d}(P_1,\, P_2,\, \dots,\, P_n) \geq \prod_{i=1}^n \m_{\R^d}(P_i)$, which holds for all configurations $P_1, P_2, \dots, P_n \subset \R^d$;
    \item[-] Continuity of the independence density function $\m_{\R^d}$ on the set of admissible configurations; and
    \item[-] Existence of extremizer measurable sets (i.e. having maximal density) which avoid admissible configurations.
\end{itemize}

In Section~\ref{Sphere} we will consider these same questions but related to the more complicated setting of sets on the unit sphere $\S^d$.
We will also present (and prove) a spherical analogue of Bourgain's Theorem;
this is in line with our question~(Q3), as the sphere is the most well-studied non-Euclidean space.

Many of the arguments from the Euclidean setting will be used again in the spherical setting
(in particular the reliance on our two main combinatorial tools),
but there are also some complications we need to solve that are intrinsic to the sphere.
One of them is that harmonic analysis is (for our purposes) much more complicated on~$\S^d$ than it is on $\R^d$, which makes our proof of the spherical Counting Lemma correspondingly harder and more technical than its Euclidean counterpart.
Moreover, due to the lack of dilation invariance in the spherical setting, we will only be able to make a modest progress towards answering its analogue of question~(Q2)
(and the answer to question~(Q1) will be somewhat more intricate).
The other results proven in the Euclidean space setting will continue to hold in the same form for sets on the sphere.

Finally, in Section~\ref{Conclusion} we discuss some related results in the literature and suggest several intriguing open problems in line with the results presented here.

\subsection{Some remarks on notation}

The same denomination will be used for both a set and its indicator function;
for instance, if we are given $A \subseteq \R^d$, then $A(x) = 1$ if $x \in A$ and $A(x) = 0$ otherwise.
The group of permutations of $\{1, \dots, k\}$ is denoted by $\mathfrak{S}_k$.
Given a group $G$ acting on some space $X$ and an element $x$ of this space, we write $\Stab^G(x) := \{g \in G:\, g.x = x\}$ for the stabilizer subgroup of $x$.

The averaging notation $\mathbb{E}_{x \in X}$ is used to denote the expectation when the variable~$x$ is distributed uniformly over the set~$X$.
When~$X$ is (a subset of) a compact group~$G$, this measure is (the restriction of) the normalized Haar measure on~$G$, which is the unique Borel probability measure on~$G$ which is invariant by both left- and right-actions of this group.
Similarly, we write $\mathbb{P}_{x \in X}$ to denote the probability under this same distribution.

\section{Configurations in Euclidean space} \label{Space}

Throughout this section we shall fix an integer $d \geq 2$ and work on the $d$-dimensional Euclidean space $\R^d$, equipped with its usual inner product $x \cdot y$ and associated Euclidean norm $\|x\|$.
We denote by $\meas$ the Lebesgue measure on $\R^d$ and by $\mu$ the normalized Haar measure on the orthogonal group $\SO(\R^d) = \{O \in \R^{d \times d}:\, O^t O = I\}$.

Given $x \in \R^d$ and $R > 0$, we denote by $Q(x, R)$ the axis-parallel open cube of side length $R$ centered at $x$.
We write $d_{Q(x, R)}(A) := \meas(A \cap Q(x, R))/R^d$ for the density of $A \subseteq \R^d$ inside the cube $Q(x, R)$.
The upper density of a measurable set $A \subseteq \R^d$ can then be written as
$\overline{d}(A) = \limsup_{R \rightarrow \infty} d_{Q(0, R)}(A);$
if the limit exists, we shall instead denote it by $d(A)$.

A \emph{configuration} $P$ is just a finite subset of $\R^d$, and we define its \emph{diameter} $\diam P$ as the largest distance between two of its points.
Recall that a configuration $P \subset \R^d$ on $k$ points is said to be \emph{admissible} if $k \leq d$ and if $P$ is non-degenerate (that is, if it spans a $(k-1)$-dimensional affine hyperplane).
The space of $k$-point configurations can be given a metric induced from the Euclidean norm as follows:
if $P = \{v_1, \dots, v_k\}$ and $Q = \{u_1, \dots, u_k\}$, the distance between $P$ and $Q$ is
$$\|P - Q\|_{\infty} := \min_{\sigma \in \mathfrak{S}_k} \max_{1 \leq i \leq k} \|v_i - u_{\sigma(i)}\|,$$
where the minimum is taken over all permutations $\sigma$ of $\{1, \dots, k\}$.
It is easy to see that, under the topology induced by this metric, the set of admissible configurations is an open set and that it is dense inside the family of all subsets of $\R^d$ with at most $d$ elements.

We say that two configurations $P, Q \subset \R^d$ are \emph{congruent}, and write $P \simeq Q$, if they can be made equal using only rigid transformations;
that is, $P \simeq Q$ if and only if there exist $x\in \R^d$ and $T \in \SO(\R^d)$ such that $P = x + T \cdot Q$.
Given a configuration $P \subset \R^d$, we say that a set $A \subseteq \R^d$ \emph{avoids} $P$ if there is no subset of $A$ which is congruent to $P$.

We can now formally define our main object of study in this section, the \emph{independence density} of a configuration or family of configurations.
There are in fact two closely related versions of this parameter we will need, depending on whether we are considering bounded or unbounded configuration-avoiding sets.
Given $n \geq 1$ configuration $P_1, \dots, P_n \subset \R^d$, we then define the quantities
\begin{align*}
    \m_{\R^d}(P_1, \dots, P_n) &:= \sup \big\{\overline{d}(A):\, A \subset \R^d \text{ avoids } P_i,\, 1 \leq i \leq n\big\} \hspace{2mm} \text{and} \\
    \m_{Q(0, R)}(P_1, \dots, P_n) &:= \sup \big\{d_{Q(0, R)}(A):\, A \subset Q(0, R) \text{ avoids } P_i,\, 1 \leq i \leq n\big\}.
\end{align*}
These parameters are analogous to the notion of independence number of a hypergraph:
if we consider the hypergraph on vertex set $\R^d$ (resp. $Q(0, R)$) whose edges are all isometric copies of $P_j$, $1\leq j\leq n$, then $\m_{\R^d}(P_1, \dots, P_n)$ (resp. $\m_{Q(0, R)}(P_1, \dots, P_n)$) can be thought of as the density of a largest independent set in this hypergraph.

\begin{remark}
For the sake of clarity and notational convenience, whenever possible the results we give about independence density will be stated and proved in the case of only one forbidden configuration.
It can be easily verified that these results also hold in the case of several (but finitely many) forbidden configurations, with essentially unchanged proofs.
Whenever we need this greater generality we will mention how the corresponding statement would be in the case of several configurations.
\end{remark}

We start our investigations by proving a simple lemma which relates the two versions of independence density just defined:

\begin{lem} \label{lem:bounds_space}
For all configurations $P \subset \R^d$ and all $R > 0$, we have
$$\frac{\m_{Q(0, R)}(P)}{\left(1 + \frac{\diam P}{R}\right)^d} \leq \m_{\R^d}(P) \leq \m_{Q(0, R)}(P).$$
\end{lem}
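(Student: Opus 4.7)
The inequality splits into two directions, one essentially algebraic (the upper bound) and one constructive (the lower bound), and both boil down to comparing the measure of $A$ in cubes of two different scales.

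For the upper bound $\m_{\mathbb{R}^d}(P) \leq \m_{Q(0,R)}(P)$, I would fix any measurable $A \subseteq \mathbb{R}^d$ avoiding $P$ and observe that for every $x \in \mathbb{R}^d$ the set $(A - x) \cap Q(0,R)$ sits inside $Q(0,R)$ and still avoids $P$ (since congruence is translation-invariant), so $\meas\bigl(A \cap (x + Q(0,R))\bigr) \leq \m_{Q(0,R)}(P) \cdot R^d$. Covering $Q(0,T)$ by $\lceil T/R \rceil^d$ essentially disjoint translates of $Q(0,R)$ and summing this bound gives $\meas(A \cap Q(0,T)) \leq \m_{Q(0,R)}(P) \cdot T^d + O_R(T^{d-1})$; dividing by $T^d$ and letting $T \to \infty$ yields the inequality.

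For the lower bound $\m_{Q(0,R)}(P) / (1 + \diam P/R)^d \leq \m_{\mathbb{R}^d}(P)$, the plan is a periodic tiling argument. Pick $A \subset Q(0,R)$ avoiding $P$ with density arbitrarily close to $\m_{Q(0,R)}(P)$, set $L := R + \diam P$, and define the $L\mathbb{Z}^d$-periodic set $B := \bigcup_{k \in \mathbb{Z}^d} (Lk + A) \subset \mathbb{R}^d$, so that consecutive translates of $A$ are separated by corridors of width $\diam P$. The key claim is that $B$ still avoids $P$: any congruent copy $P' \simeq P$ contained in $B$ must in fact lie in a single translate $Lk + A$, because two points lying in distinct translates of $Q(0,R)$ would differ by strictly more than $L - R = \diam P$ in some coordinate (the open cubes $Q(0,R)$ contribute strictly less than $R$ to any coordinate-wise difference), giving $\ell^2$-distance strictly greater than $\diam P = \diam P'$. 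Once $P'$ is confined to one translate, the avoidance property of $A$ gives a direct contradiction. Computing the density of $B$ is then routine: $B$ is periodic of period $L$ with $\meas(A) = d_{Q(0,R)}(A) \cdot R^d$ inside each fundamental cell, so $d(B) = d_{Q(0,R)}(A) \cdot (R/L)^d$; letting $d_{Q(0,R)}(A) \to \m_{Q(0,R)}(P)$ produces the claimed bound.

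The only real subtlety is the open-versus-closed cube distinction, which is what allows the gap to be exactly $\diam P$ (rather than strictly larger) while still ruling out copies of $P$ that straddle the boundary between adjacent cells. Everything else is bookkeeping: boundary errors in the upper bound are negligible on scale $T^d$, and the lower bound is obtained by taking near-extremizers for $\m_{Q(0,R)}(P)$.
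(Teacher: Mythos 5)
Your proof is correct and follows essentially the same route as the paper: the lower bound is the identical periodic-tiling construction with period $R + \diam P$ (you supply the gap-distance argument that the paper leaves implicit), and the upper bound compares $Q(0,T)$ against $R$-scale translates — you sum measures over a covering where the paper averages densities to locate one good translate, but these are two dual forms of the same bookkeeping and yield the same conclusion.
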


\begin{proof}
For the first inequality, suppose $A \subseteq Q(0, R)$ is a set avoiding $P$ and consider the periodic set $A' := A + (R + \diam P) \mathbb{Z}^d$.
This set also avoids $P$, and it has density
$$d(A') = \frac {\meas(A)}{(R + \diam P)^d} = \frac{d_{Q(0, R)}(A)}{\left(1 + \frac{\diam P}{R}\right)^d}.$$
Since we can choose $d_{Q(0, R)}(A)$ arbitrarily close to $\m_{Q(0, R)}(P)$, the leftmost inequality follows.

Now let $A \subseteq \R^d$ be any set avoiding $P$, and note that $A \cap Q(x, R)$ also avoids $P$ for every $x \in \R^d$.
By fixing $\varepsilon > 0$ and then averaging over all $x$ inside a large enough cube $Q(0, R')$ (depending on $A$, $\diam P$ and $\varepsilon$), we conclude there is $x \in \R^d$ for which $\meas(A \cap Q(x, R)) > (\overline{d}(A) - \varepsilon) R^d$.
The rightmost inequality follows.
\end{proof}

As we are interested in the study of sets avoiding certain configurations, it is useful to also have a way of \emph{counting} how many such configurations there are in a given set.
For a given configuration $P = \{v_1, v_2, \dots, v_k\} \subset \R^d$ and a measurable set $A \subseteq \R^d$, we define
$$I_P(A) := \int_{\R^d} \int_{\SO(\R^d)} A(x + T v_1) A(x + T v_2) \cdots A(x + T v_{k}) \,d\mu(T) \,dx,$$
which represents how many (congruent) copies of $P$ are contained in $A$.
This quantity $I_P(A)$ can of course be infinite if the set $A$ is unbounded, but we will use it almost exclusively for bounded sets.
We can similarly define its weighted version
$$I_P(f) := \int_{\R^d} \int_{\SO(\R^d)} f(x + T v_1) f(x + T v_2) \cdots f(x + T v_{k}) \,d\mu(T) \,dx,$$
whenever $f: \R^d \rightarrow \R$ is a measurable function for which this integral makes sense
(say, for $f \in L^k(\R^d)$).
A large part of our analysis consists in getting a better understanding of the counting function $I_P$.


When a measurable set $A \subseteq \R^d$ avoids some configuration $P$, it is clear from the definition that $I_P(A) = 0$;
however, it is also possible for $I_P(A)$ to be zero even when $A$ contains congruent copies of $P$.
In intuitive terms, the condition $I_P(A) = 0$ means only that $A$ contains a negligible fraction of all possible copies of $P$.
The next result shows that this distinction is essentially irrelevant for most purposes:

\begin{lem}[Zero-measure removal] \label{lem:zero_meas_space}
Suppose $P \subset \R^d$ is a finite configuration and $A \subseteq \R^d$ is measurable.
If $I_P(A) = 0$, then we can remove a zero-measure subset of $A$ in order to remove all copies of $P$.
\end{lem}

\begin{proof}
By the Lebesgue Density Theorem, we have that
$$\lim_{\delta \rightarrow 0} \bigg| \frac{1}{\delta^d} \int_{Q(x, \delta)} A(y) \,dy - A(x) \bigg| = 0 \hspace{3mm} \text{for almost every } x \in \R^d.$$
Now we remove from $A$ all points $x$ for which this identity does not hold, thus obtaining a subset $B \subseteq A$ with $\meas(A \setminus B) = 0$ and
$$\lim_{\delta \rightarrow 0} \frac{1}{\delta^d} \int_{Q(x, \delta)} B(y) \,dy = 1 \hspace{3mm} \text{for all } x \in B.$$
We will show that no congruent copy of $P$ remains on this restricted set $B$.

Suppose for contradiction that $B$ contains a copy $\{u_1, \dots, u_k\}$ of $P$.
By assumption there exists some $\delta > 0$ such that
\begin{equation} \label{eq:dens_hyp}
    d_{Q(u_i, \delta)}(B) = \frac{1}{\delta^d} \int_{Q(u_i, \delta)} B(y) \,dy \geq 1 - \frac{1}{2^{d+1}k} \hspace{3mm} \text{for all } 1 \leq i \leq k;
\end{equation}
fix such a value of $\delta$.
Note that, if $d_{Q(x, \delta)}(B) \geq 1 - 1/(2^{d+1} k)$ for some $x \in \R^d$,
then for all $y \in Q(x, \delta/2)$ we have
\begin{align*}
    d_{Q(y, \delta/2)}(B)
    &= 1 - \frac{\meas\big(Q(y, \delta/2) \setminus B\big)}{(\delta/2)^d} \\
    &\geq 1 - \frac{\meas\big(Q(x, \delta) \setminus B\big)}{(\delta/2)^d} \\
    &= 1 - \frac{\delta^d \big(1 - d_{Q(x, \delta)}(B)\big)}{(\delta/2)^d} \\
    &\geq 1 - \frac{1}{2k}.
\end{align*}
Our hypothesis (\ref{eq:dens_hyp}) thus implies that $d_{Q(y, \delta/2)}(B) \geq 1 - 1/2k$ whenever $y \in Q(u_i, \delta/2)$ for some $1 \leq i \leq k$.

Let $\ell := \max \{\|u_i\|:\, 1 \leq i \leq k\}$ be the largest length of a vector in our copy of $P$, and let us write
$\ball(I,\, \delta/(4\ell)) := \big\{ T \in \SO(\R^d):\, \|T - I\| \leq \delta/(4\ell) \big\}$
for the ball of radius $\delta/(4\ell)$ in spectral norm centered on the identity $I$.
Note that, whenever $T \in \ball(I,\, \delta/(4\ell))$, we have that $T u_i \in Q(u_i, \delta/2)$ for each $1 \leq i \leq k$.
By the union bound we then have
\begin{align*}
    \int_{\R^d} \prod_{i = 1}^k B(x + Tu_i) \,dx
    &\geq \int_{Q(0, \delta/2)} \prod_{i = 1}^k B(x + Tu_i) \,dx \\
    &= \bigg(\frac{\delta}{2}\bigg)^d \mathbb{P}_{x \in Q(0, \delta/2)} \big(x + Tu_i \in B \text{ for all } 1 \leq i \leq k\big) \\
    &\geq \bigg(\frac{\delta}{2}\bigg)^d \bigg(1 - \sum_{i=1}^k \mathbb{P}_{x \in Q(0, \delta/2)}(x + Tu_i \notin B)\bigg) \\
    &= \bigg(\frac{\delta}{2}\bigg)^d \bigg(1 - \sum_{i=1}^k \big(1 - d_{Q(Tu_i, \delta/2)}(B)\big)\bigg) \\
    &\geq \frac{1}{2} \bigg(\frac{\delta}{2}\bigg)^d.
\end{align*}
This immediately implies that
$$I_P(B) \geq \int_{\R^d} \int_{\ball(I,\, \delta/(4\ell))} \prod_{i = 1}^k B(x + Tu_i) \,d\mu(T) \,dx \geq \frac{\mu\big(\ball(I,\, \delta/(4\ell))\big)}{2} \bigg(\frac{\delta}{2}\bigg)^d > 0,$$
contradicting our assumption that $I_P(A) = 0$ and finishing the proof.
\end{proof}

\subsection{Fourier analysis on $\R^d$ and the Counting Lemma} \label{FourierSection}

We next show that the count of copies of an admissible configuration $P$ inside a measurable set $A$ does not significantly change if we ignore its fine details and `blur' the set $A$ a little.
The philosophy is similar to the famous \emph{regularity method} in graph theory, where a large graph can be replaced by a much smaller weighted `reduced graph'
(which is an averaged version of the original graph which ignores its fine details)
without significantly changing the count of copies of any small subgraph.

The methods we will use are Fourier analytic in nature, drawing from Bourgain's arguments presented in~\cite{Bourgain}.
We define the Fourier transform on $\R^d$ by
$$\widehat{f}(\xi) := \int_{\R^d} f(x) e^{-2\pi i x\cdot \xi} \,dx \hspace{3mm} \text{and} \hspace{3mm} \widehat{\sigma}(\xi) := \int_{\R^d} e^{-2\pi i x\cdot \xi} \,d\sigma(x),$$
for a (complex-valued) function $f\in L^1(\R^d)$ and a finite Borel measure $\sigma$ on $\R^d$.
The convolution between two functions $f$, $g \in L^1(\R^d)$ is defined by
$$f * g(x) := \int_{\R^d} f(y) g(x-y) \,dy.$$
We recall the basic identities $\widehat{f * g}(\xi) = \widehat{f}(\xi) \widehat{g}(\xi)$ and
$$\int_{\R^d} f(x) \,d\sigma(x) = \int_{\R^d} \widehat{f}(\xi) \widehat{\sigma}(-\xi) \,d\xi,$$
as well as Parseval's Identity $\|f\|_2 = \|\widehat{f}\|_2$ for $f\in L^1(\R^d) \cap L^2(\R^d)$.
For background in Fourier analysis we refer the reader to the classic textbook of Stein and Weiss~\cite{SW71}.

Denote $\mathcal{Q}_{\delta}(x) := \delta^{-d} Q(0, \delta)(x)$.
This way, $f * \mathcal{Q}_{\delta}(x) = \delta^{-d} \int_{Q(x, \delta)} f(y) \,dy$ is the average of a function $f$ on the cube $Q(x, \delta)$.
Specializing to the indicator function of a set $A \subseteq \R^d$, we obtain $A * \mathcal{Q}_{\delta}(x) = d_{Q(x, \delta)}(A)$;
this represents a `blurring' of the set $A$ considered (see Figure~\ref{fig:setblur}).
What we wish to obtain is then an upper bound on the difference $|I_P(A) - I_P(A * \mathcal{Q}_{\delta})|$ which goes to zero as $\delta$ goes to zero, uniformly over all measurable sets $A \subseteq Q(0, R)$ (for any fixed $R > 0$).

\begin{figure}[ht]
    \centering
    \includegraphics[width = 0.9\textwidth]{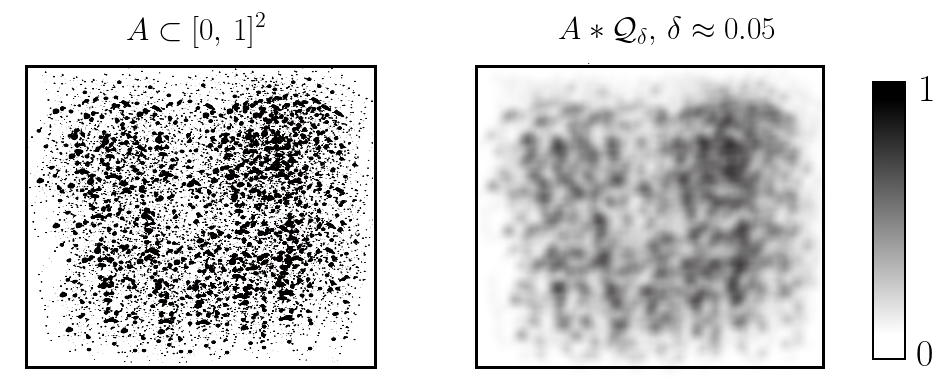}
    \caption{An example of a planar set $A$ on the unit square and the corresponding function $A * \Qcal_{\delta}$, for some small $\delta$;
    the shades of gray represent the value this function takes at each point.}
    \label{fig:setblur}
\end{figure}

Before delving into the details of our argument, let us present a simple telescoping sum argument which will be needed here and will be reused several times in this paper.
Suppose we wish to bound from above the expression
$$|I_P(f) - I_P(g)| = \bigg| \int_{\R^d} \int_{\SO(\R^d)} \bigg(\prod_{i=1}^k f(x + T v_i) - \prod_{i=1}^k g(x + T v_i)\bigg) \,d\mu(T) \,dx \bigg|$$
for some given functions $f$, $g$ and some configuration $P = \{v_1, \dots, v_k\}$.
Since we can rewrite the term inside the parenthesis above
as the telescoping sum
$$\sum_{i=1}^k \bigg(\prod_{j=1}^{i-1} f(x + T v_j)\bigg) \big(f(x + T v_i) - g(x + T v_i)\big) \bigg(\prod_{j=i+1}^{k} g(x + T v_j)\bigg),$$
it follows from the triangle inequality that $|I_P(f) - I_P(g)|$ is at most
$$\sum_{i=1}^k \bigg| \int_{\R^d} \int_{\SO(\R^d)} \prod_{j=1}^{i-1} f(x + T v_j) \big(f(x + T v_i) - g(x + T v_i)\big) \prod_{j=i+1}^{k} g(x + T v_j) \,d\mu(T) \,dx \bigg|.$$
To obtain some bound for $|I_P(f) - I_P(g)|,$ it then suffices to obtain a similar bound for an expression of the form
$$\bigg|\int_{\R^d} \int_{\SO(\R^d)} h_1(x + T u_1) \cdots h_{k-1}(x + T u_{k-1}) \big(f(x + T u_k) - g(x + T u_k)\big) \,d\mu(T) \,dx\bigg|$$
whenever each $h_i$ is either $f$ or $g$, and whenever $(u_1, \dots, u_k)$ is a permutation of the points of $P$.

We shall refer to an argument of this form (breaking a difference of products into a telescoping sum, using the triangle inequality and bounding each term of the resulting expression) as the \emph{telescoping sum trick}.
It is frequently used in modern graph and hypergraph theory when estimating the number of subgraphs inside a given large (hyper)graph $G$ with the aid of edge-discrepancy measures such as the cut norm;
such results are usually known as \emph{counting lemmas}, and are an essential part of the regularity method we have already mentioned
(see the surveys~\cite{RegularityGraphs, quasirandomness} for details).

\medskip

In our arguments we will also need some analytic facts and estimates, which we now provide.
Given an $m$-dimensional subspace $U\subseteq \R^d$, we denote by $\sigma_U^{(m-1)}$ the uniform probability measure on its unit sphere $\mathbb{S}_U^{m-1} := \big\{x\in U:\, \|x\| = 1\big\}$.
This measure is closely related to the Haar measure $\mu_U$ on the orthogonal group $\SO(U)$:
if $X \subseteq \S_U^{m-1}$ is a measurable set and $x\in \S_U^{m-1}$ is any point, then
$$\sigma_U^{(m-1)}(X) = \mu_U\big( \big\{T\in \SO(U):\, Tx \in X\big\} \big).$$
(See for instance \cite[Appendix~A.5]{harmAnalSphere} for a simple proof of this fact.)
Given $T\in \SO(\R^d)$, we write $TU := \{Tu:\, u \in U\}$ for the rotated subspace.

\begin{lem} \label{lem:Fourier_bound}
There are constants $C_1, C_2 > 0$ (depending on the dimension $d$) such that
$$|1 - \widehat{\mathcal{Q}}_{\delta}(\xi)| \leq C_1 \delta^2 \|\xi\|^2 \quad \text{for all $\delta>0$, $\xi\in \R^d$}$$
and, if $V$ is an $m$-dimensional subspace of $\R^d$,
$$\int_{\SO(\R^d)} |\widehat{\sigma}^{(m-1)}_{TV}(\xi)|^2 \,d\mu(T) \leq C_2 \|\xi\|^{-(m-1)} \quad \text{for all $\xi \in \R^d\setminus \{0\}$.}$$

\end{lem}

\begin{proof}
For the first inequality, note that
$$\widehat{\mathcal{Q}}_{\delta}(\xi) = \frac{1}{\delta^d} \int_{Q(0,\delta)} e^{-2\pi i x\cdot \xi} \,dx = \prod_{j=1}^d \frac{1}{\delta} \int_{-\delta/2}^{\delta/2} e^{-2\pi i x_j \xi_j} \,dx_j = \prod_{j=1}^d \frac{\sin(\pi\delta \xi_j)}{\pi\delta \xi_j}$$
(where the $j$-th term in the product is $1$ if $\xi_j = 0$).
It follows from the Taylor expansion of $\sin(\cdot)$ that $|x - \sin(x)| \leq C |x|^3$ for some $C>0$ and all $x\in [-1, 1]$.
As the sine function is bounded, we conclude there is some constant $C_1 > 0$ (depending on $d$) for which
$$|1 - \widehat{\mathcal{Q}}_{\delta}(\xi)| = \bigg|1 - \prod_{j=1}^d \frac{\sin(\pi\delta \xi_j)}{\pi\delta \xi_j}\bigg| \leq C_1 \sum_{j=1}^d (\delta \xi_j)^2 = C_1 \delta^2 \|\xi\|^2$$
holds for all $\delta>0$, $\xi\in \R^d$.

For the second inequality we use the estimate
\begin{equation*}
    \big|\widehat{\sigma}^{(m-1)}_{U}(\xi)\big| \leq K \|\pi_U \xi\|^{-(m-1)/2},
\end{equation*}
where $\pi_U \xi$ is the orthogonal projection of $\xi$ onto $U$ and $K$ is an absolute constant.
This estimate follows from
$$\widehat{\sigma}^{(m-1)}_{U}(\xi) = \int_{\R^d} e^{-2\pi i x\cdot \xi} \,d\sigma^{(m-1)}_{U}(x) = \int_{U} e^{-2\pi i x\cdot \pi_U\xi} \,d\sigma^{(m-1)}_{U}(x)$$
and the well-known asymptotic bound $|\widehat{\sigma}_{\R^m}^{(m-1)}(\xi)| = O(\|\xi\|^{-(m-1)/2})$ for the unit sphere on $\R^m$
(see for instance Chapter VIII, Section 3 in Stein's book~\cite{Stein93}).
For any $\xi\in \R^d\setminus \{0\}$, we then have that
\begin{align*}
    \int_{\SO(\R^d)} |\widehat{\sigma}^{(m-1)}_{TV}(\xi)|^2 \,d\mu(T)
    &\leq \int_{\SO(\R^d)} K^2 \|\pi_{TV} \xi\|^{-(m-1)} \,d\mu(T) \\
    &= K^2 \int_{\SO(\R^d)} \|\pi_{V} (T^{-1}\xi)\|^{-(m-1)} \,d\mu(T) \\
    &= K^2 \|\xi\|^{-(m-1)} \int_{\S^{d-1}} \|\pi_{\R^m} y\|^{-(m-1)} \,d\sigma^{(d-1)}_{\R^d}(y),
\end{align*}
where we performed the change of variables $y = T^{-1} \xi/\|\xi\|$.

It now suffices to show that the last integral above is finite, which we will do by induction on $d \geq m$.
In the base case where $d=m$ the integral is clearly equal to $1$, since the projection operator~$\pi_{\R^m}$ is the identity.
If $d \geq m+1$, parameterize $\S^{d-1}$ by
$$y = (z \sin\theta,\, \cos\theta) \quad \text{for $z\in \S^{d-2}$, $\theta \in [0, \pi]$;}$$
denoting by $\omega_{d-1}$ (resp. $\omega_{d-2}$) the total Lebesgue measure of the unit sphere of $\R^d$ (resp. $\R^{d-1}$), this change of variables gives
$$\omega_{d-1} \,d\sigma^{(d-1)}_{\R^d}(y) = \omega_{d-2} (\sin\theta)^{d-2} \,d\sigma^{(d-2)}_{\R^{d-1}}(z) \,d\theta.$$
We then obtain
\begin{align*}
    \int_{\S^{d-1}} &\|\pi_{\R^m} y\|^{-(m-1)} \,d\sigma^{(d-1)}_{\R^d}(y) \\
    &= \frac{1}{\omega_{d-1}} \int_{0}^{\pi} \int_{\S^{d-2}} \big(\sin\theta \,\|\pi_{\R^m} z\|\big)^{-(m-1)} \omega_{d-2} (\sin\theta)^{d-2} \,d\sigma^{(d-2)}_{\R^{d-1}}(z) \,d\theta \\
    &= \frac{\omega_{d-2}}{\omega_{d-1}} \bigg(\int_{0}^{\pi} (\sin\theta)^{d-m-1} \,d\theta\bigg) \int_{\S^{d-2}} \|\pi_{\R^m} z\|^{-(m-1)} \,d\sigma^{(d-2)}_{\R^{d-1}}(z) \\
    &\leq \frac{\omega_{d-2} \pi}{\omega_{d-1}} \int_{\S^{d-2}} \|\pi_{\R^m} z\|^{-(m-1)} \,d\sigma^{(d-2)}_{\R^{d-1}}(z),
\end{align*}
and the desired bound follows by induction.
\end{proof}

We are now ready to formally state and prove our main technical tool in the Euclidean setting, which by analogy with methods from graph theory we shall call the Counting Lemma.
We note that the main steps of its proof were already present in Bourgain's paper~\cite{Bourgain}.

\begin{lem} [Counting Lemma] \label{lem:counting_space}
For every admissible configuration $P \subset \R^d$ there exists a constant $C_P > 0$ such that the following holds:
for every $R > 0$ and any measurable set $A \subseteq Q(0, R)$, we have that
$$|I_P(A) - I_P(A * \mathcal{Q}_{\delta})| \leq C_P \delta^{1/4} R^d \quad \text{for all $\delta \in (0, 1].$}$$
Moreover, the same constant $C_P$ can be made to hold uniformly over all configurations $P'$ inside a neighborhood of $P$.
\end{lem}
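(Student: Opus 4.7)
My plan is to apply the telescoping sum trick to reduce to a single-term Fourier estimate, then decompose the $\SO(d)$-average over the Grassmannian $G_{d,k-1}$, and finally invoke Lemma~\ref{lem:Grassm_bound} to extract the $\delta^{1/4}$ decay. The telescoping trick as set up in the text bounds $|I_P(A) - I_P(A*\mathcal{Q}_{\delta})|$ by a sum of at most $k$ terms of the form
$$J_i := \bigg| \int_{\mathbb{R}^d}\int_{\SO(d)} (A - A*\mathcal{Q}_{\delta})(x + Tu_i)\prod_{j \neq i} h_j(x + Tu_j) \,d\mu(T)\,dx \bigg|,$$
with $h_j \in \{A, A*\mathcal{Q}_{\delta}\}$. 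After the substitution $x \mapsto x - Tu_i$ I may assume $u_i = 0$, and admissibility of $P$ forces the remaining points $\{u_j\}_{j \neq i}$ to span a linear subspace $V$ of dimension $k - 1 < d$. Setting $F(x) := \int_{\SO(d)} \prod_{j \neq i} h_j(x + Tu_j)\,d\mu(T)$, Plancherel turns $J_i$ into $(2\pi)^{-d}\big|\int \widehat{A}(\xi)(1-\widehat{\mathcal{Q}_{\delta}}(\xi))\overline{\widehat{F}(\xi)}\,d\xi\big|$.

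Next I would decompose Haar measure on $\SO(d)$ according to the quotient map $T \mapsto T(V) \in G_{d,k-1}$, whose fibres are cosets of $\Stab^{\SO(d)}(V) \cong \SO(k-1) \times \SO(d-k+1)$. This writes $F(x) = \mathbb{E}_{W \in G_{d,k-1}} G_W(x)$, where $G_W(x) := \mathbb{E}_{O \in \SO(k-1)} \prod_{j \neq i} h_j(x + R_W O u_j)$ averages over rotations of the configuration inside $W = R_W(V)$ (with $R_W$ a fixed rotation sending $V$ to $W$). Since every shift $R_W O u_j$ lies in $W$, the function $G_W$ is smoothed along $W$ and its Fourier transform $\widehat{G_W}(\xi)$ decays in $\dist(\xi, W)$.

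Applying Cauchy--Schwarz on the Fourier integral with the symmetric weight $(1+\dist(\xi, W))^{\pm \rho/2}$, followed by a further Cauchy--Schwarz in $W \in G_{d,k-1}$, yields one factor matching the integrand of Lemma~\ref{lem:Grassm_bound} with $m = k - 1$; it is bounded by $(C_d(\delta^2 + \delta^{\rho/2}))^{1/2}\|A\|_2 \leq C_d' \delta^{\rho/4} R^{d/2}$, and choosing $\rho = 1$ produces the desired $\delta^{1/4}$ decay. The other factor, $\bigl(\mathbb{E}_W \int |\widehat{G_W}(\xi)|^2 (1 + \dist(\xi,W))^\rho \,d\xi\bigr)^{1/2}$, is bounded by $O_P(R^{d/2})$ using the transverse Fourier decay of $\widehat{G_W}$ from the previous paragraph, together with $\|h_j\|_\infty \leq 1$ and the fact that everything is supported near $Q(0,R)$. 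Summing over $i$ yields the bound $C_P \delta^{1/4} R^d$; uniformity of $C_P$ over a neighbourhood of $P$ follows because all the constants depend continuously on the geometric data of $P$ and admissibility is an open condition.

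The principal obstacle is the decomposition step: establishing quantitative Fourier decay of $\widehat{G_W}$ in directions transverse to $W$ with enough uniformity in $W$ to drive the Grassmannian Cauchy--Schwarz. This is the Fourier-analytic core of Bourgain's argument, and it is precisely where the admissibility hypothesis $k \leq d$ is used: it ensures $k-1 < d$, so that the effective subspace $V$ is a proper subspace of $\mathbb{R}^d$ and Lemma~\ref{lem:Grassm_bound} applies with $m = k-1$.
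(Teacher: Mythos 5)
The proposal breaks down at the step you yourself flag as the "principal obstacle": the claimed transverse Fourier decay of $\widehat{G_W}$ is not merely a technical detail to be filled in; it is false, and this is not where the Fourier-analytic work of Bourgain's argument lives. Consider the simplest case $k=2$: then $V$ is one-dimensional, $\mathrm{O}(1)=\{\pm 1\}$, and $G_W(x) = \tfrac12\big(h(x+w) + h(x-w)\big)$ with $w = R_W u$, so $\widehat{G_W}(\xi) = \widehat{h}(\xi)\cos(\xi\cdot w)$ -- there is no decay whatsoever in $\dist(\xi,W)$. More generally, the $\mathrm{O}(k-1)$-average moves the shifts around \emph{inside} $W$; to the extent this resembles convolution with a sphere measure supported in $W$, the resulting Fourier decay is \emph{along} $W$-directions and constant (up to phase) transverse to $W$, which is the opposite of what you need. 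Consequently the second Cauchy--Schwarz factor $\mathbb{E}_W \int |\widehat{G_W}(\xi)|^2(1+\dist(\xi,W))^{\rho}\,d\xi$ diverges: already for $h = \mathbbm{1}_{Q(0,R)}$ and $\rho = 1$ the transverse part of this integral is infinite, so the bound $O_P(R^{d/2})$ cannot hold.

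The way the paper actually extracts the gain is by \emph{not} absorbing the orbital average into the product function. After putting $v_k=0$ and placing the blurring difference on $f_{k-1}$, the $\SO(d)$-average is disintegrated so that the last variable $y_{k-1}$ is integrated against an explicit $(d-k+1)$-sphere measure $\sigma^{(d-k+1)}_Y$ (the orbit of $v_{k-1}$ under the stabilizer of $y_1,\dots,y_{k-2}$). Parseval in $(x,y_{k-1})$ then puts $\widehat{\sigma}^{(d-k+1)}_Y$ as a multiplicative factor against $\widehat{G}(\xi) = \widehat{f}_{k-1}(\xi)(1-\widehat{\mathcal{Q}}_\delta(\xi))$, and it is \emph{this} sphere measure whose Fourier transform supplies the transverse decay $(1+\dist(\xi,[Y]))^{-(d-k+1)/2}$. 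The frozen product $F_Y(x)=f_0(x)f_1(x+y_1)\cdots f_{k-2}(x+y_{k-2})$ is controlled only through its $L^2$ norm, via the pointwise bound $|F_Y|\le |f_0|$, and no decay of $\widehat{F}_Y$ is used or available. Finally Lemma~\ref{lem:Grassm_bound} is invoked with $m = k-2$ (the span $[y_1,\dots,y_{k-2}]$) and $\rho = d-k+1$; these are forced by the geometry and not free choices. Your proposal of $m=k-1$, $\rho=1$ corresponds to a different disintegration in which the needed transverse decay has been pushed onto a factor that cannot carry it.
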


\begin{proof}
Let $(v_1, \dots, v_k)$ be a fixed permutation of the points of $P$.
We will work a bit more generally and show that a bound as in the statement of the lemma holds for
$$\bigg| \int_{\R^d} \int_{\SO(\R^d)} f_1(x + T v_1) \cdots f_{k-1}(x + T v_{k-1}) \big( f_k(x + T v_k) - f_k * \mathcal{Q}_{\delta}(x + T v_k) \big) \,d\mu(T) \,dx \bigg|$$
whenever $f_1,\, \dots,\, f_k:\, Q(0, R) \rightarrow [-1, 1]$ are measurable functions.
By our telescoping sum trick, this immediately implies the result.

We first exploit the translation invariance of the problem in order to simplify the argument later on.
Let $U \subset \R^d$ denote the $(k-2)$-dimensional affine hyperplane spanned by $v_1, \dots, v_{k-1}$, and let $\pi_U v_k$ be the orthogonal projection of $v_k$ onto $U$
(so $\pi_U v_k$ is the point in $U$ which is closest to $v_k$).
By translating all points in $P$ by $-\pi_U v_k$, we may assume that $U$ contains the origin (being thus a subspace of $\R^d$) and that $v_k$ belongs to its orthogonal complement $U^{\perp}$.
Note that $v_k \neq 0$ since the points in $P$ are affinely independent, and $U^{\perp}$ has dimension $d-k+2 \geq 2$;
these are the two properties we will need in the proof which require the assumption that $P$ is admissible.

Let $H := \Stab^{\SO(\R^d)}(U)$ denote the subgroup of orthogonal transformations which act trivially on the subspace $U$, and let $\nu_H$ be the Haar measure on $H$.
Let $G := f_k - f_k * \mathcal{Q}_{\delta}$ and, for a given $T\in \SO(\R^d)$, define the function $F_T:\, Q(0, R) \rightarrow [-1, 1]$ by
$$F_T(x) = \prod_{i=1}^{k-1} f_i(x + Tv_i).$$
The integrand on the expression we wish to bound can then be written more succinctly as $F_T(x) G(x + T v_k)$.
By symmetry of the Haar measure $\mu$ we conclude that
\begin{align*}
    \int_{\SO(\R^d)} F_T(x) \, G(x + T v_k) \,d\mu(T)
    &= \int_{H} \int_{\SO(\R^d)} F_{TS}(x) \, G(x + TS v_k) \,d\mu(T) \,d\nu_H(S) \\
    &= \int_{\SO(\R^d)} \int_{H} F_T(x) \, G(x + TS v_k) \,d\nu_H(S) \,d\mu(T),
\end{align*}
where we have used that $F_{TS} = F_T$ for all $S\in H$, since by definition $Sv_i = v_i$ for all $1\leq i\leq k-1$.
Using this identity, we conclude that the expression we wish to bound is at most
\begin{equation} \label{whattoboundspace}
    \bigg|\int_{\SO(\R^d)} \bigg(\int_{\R^d} \int_{H} F_T(x) \, G(x + TS v_k) \,d\nu_H(S) \,dx\bigg) d\mu(T) \bigg|.
\end{equation}

Now we concentrate on the expression inside the parenthesis in \eqref{whattoboundspace}, for some fixed $T\in \SO(\R^d)$.
We claim that, when $S$ is distributed according to the Haar measure on $H$, the variable $y := TS(v_k/\|v_k\|)$ is uniformly distributed on the unit sphere of the subspace $TU^{\perp}$.
This follows from the fact that $v_k/\|v_k\|$ is on the unit sphere of $U^{\perp}$, and $H := \Stab^{\SO(\R^d)}(U)$ is
isomorphic\footnote{Every orthogonal transformation on  $U^{\perp}$ can be identified with an element of $\Stab^{\SO(\R^d)}(U)$ by tensoring with the identity on $U$, with this identification being bijective and measure-preserving.}
to the orthogonal group of $U^{\perp}$.
Denoting by $\sigma^{(d-k+1)}_{TU^{\perp}}$ the normalized surface measure on the unit sphere of $TU^{\perp}$, we can then write the expression inside the parenthesis in \eqref{whattoboundspace} as
\begin{align*}
    \int_{\R^d} \int_{\R^d} F_T(x) &\,G(x + \|v_k\|y) \,d\sigma^{(d-k+1)}_{TU^{\perp}}(y) \,dx \\
    &= \int_{\R^d} F_T(x) \int_{\R^d} e^{2\pi i x\cdot \xi} \,\widehat{G}(\xi) \,\widehat{\sigma}^{(d-k+1)}_{TU^{\perp}}(-\|v_k\|\xi) \,d\xi \,dx \\
    &= \int_{\R^d} \widehat{F}_T(-\xi) \,\widehat{G}(\xi) \,\widehat{\sigma}^{(d-k+1)}_{TU^{\perp}}(-\|v_k\|\xi) \,d\xi.
\end{align*}
Integrating over $T\in \SO(\R^d)$ and applying Cauchy-Schwarz to the inner integral, we conclude that~\eqref{whattoboundspace} is at most
\begin{align*}
    \int_{\SO(\R^d)} &\|\widehat{F}_T\|_2 \bigg( \int_{\R^d} |\widehat{G}(\xi)|^2 \,|\widehat{\sigma}^{(d-k+1)}_{TU^{\perp}}(\|v_k\|\xi)|^2 \,d\xi \bigg)^{1/2} d\mu(T) \\
    &= \int_{\SO(\R^d)} \|F_T\|_2 \bigg( \int_{\R^d} |\widehat{f}_k(\xi)|^2 \,|1 - \widehat{\mathcal{Q}}_{\delta}(\xi)|^2 \,|\widehat{\sigma}^{(d-k+1)}_{TU^{\perp}}(\|v_k\|\xi)|^2 \,d\xi \bigg)^{1/2} d\mu(T),
\end{align*}
where we have used Parseval's Identity and the convolution identity.

Since $|F_T(x)| \leq |f_1(x + Tv_1)|$ pointwise, it follows that $\|F_T\| \leq \|f_1\|_2$ for all $T \in \SO(\R^d)$.
Using this inequality and applying Cauchy-Schwarz to the outer integral, we see that the expression above is at most
\begin{equation} \label{eq:finalexp}
    \|f_1\|_2 \bigg( \int_{\SO(\R^d)} \int_{\R^d} |\widehat{f}_k(\xi)|^2 \,|1 - \widehat{\mathcal{Q}}_{\delta}(\xi)|^2 \,|\widehat{\sigma}^{(d-k+1)}_{TU^{\perp}}(\|v_k\|\xi)|^2 \,d\xi \,d\mu(T) \bigg)^{1/2}.
\end{equation}
Finally, the double integral in~\eqref{eq:finalexp} can be bounded using the Fourier estimates given in Lemma~\ref{lem:Fourier_bound}, as we now show.
Divide the integral over $\R^d$ into two parts, corresponding to the bounded region where $\|\xi\| \leq (\delta \|v_k\|)^{-1/2}$ and the unbounded region where $\|\xi\| > (\delta \|v_k\|)^{-1/2}$.
For the bounded region we note that $|\widehat{\sigma}^{(d-k+1)}_{TU^{\perp}}(\xi)| \leq 1$ for all $T\in \SO(\R^d)$, $\xi\in \R^d$, and use the first inequality in Lemma~\ref{lem:Fourier_bound} to obtain
\begin{align*}
    \int_{\SO(\R^d)} \int_{\|\xi\| \leq (\delta \|v_k\|)^{-1/2}} &|\widehat{f}_k(\xi)|^2 \,|1 - \widehat{\mathcal{Q}}_{\delta}(\xi)|^2 \,|\widehat{\sigma}^{(d-k+1)}_{TU^{\perp}}(\|v_k\|\xi)|^2 \,d\xi \,d\mu(T) \\
    &\leq \int_{\SO(\R^d)} \int_{\|\xi\| \leq (\delta \|v_k\|)^{-1/2}} |\widehat{f}_k(\xi)|^2 \big(C_1 \delta^2 \|\xi\|^2\big)^2 \,d\xi \,d\mu(T) \\
    &\leq C_1^2 \delta^2 \|v_k\|^{-2} \|f_k\|_2^2.
\end{align*}
For the unbounded region, we use the simple estimate $|\widehat{\mathcal{Q}}_{\delta}(\xi)| \leq \|\mathcal{Q}_{\delta}\|_1 = 1$ and the second inequality in Lemma~\ref{lem:Fourier_bound} to conclude that
\begin{align*}
    \int_{\SO(\R^d)} \int_{\|\xi\| > (\delta \|v_k\|)^{-1/2}} &|\widehat{f}_k(\xi)|^2 \,|1 - \widehat{\mathcal{Q}}_{\delta}(\xi)|^2 \,|\widehat{\sigma}^{(d-k+1)}_{TU^{\perp}}(\|v_k\|\xi)|^2 \,d\xi \,d\mu(T) \\
    &\leq \int_{\|\xi\| > (\delta \|v_k\|)^{-1/2}} 4 |\widehat{f}_k(\xi)|^2 \,\int_{\SO(\R^d)} |\widehat{\sigma}^{(d-k+1)}_{TU^{\perp}}(\|v_k\|\xi)|^2 \,d\mu(T)  \,d\xi \\
    &\leq 4 \left\|f_k\right\|_2^2 \sup_{\|\xi\| > (\delta \|v_k\|)^{-1/2}} \int_{\SO(\R^d)} |\widehat{\sigma}^{(d-k+1)}_{TU^{\perp}}(\|v_k\|\xi)|^2 \,d\mu(T) \\
    &\leq 4C_2 \,(\delta \|v_k\|^{-1})^{(d-k+1)/2} \|f_k\|_2^2.
\end{align*}

Summing the bounds obtained for both regions shows that, for $d \geq k$ and $0 < \delta \leq 1$, we can bound expression~\eqref{eq:finalexp} by
$$\big(C_1^2 \|v_k\|^{-2} + 4C_2 \|v_k\|^{-(d-k+1)/2}\big)^{1/2} \delta^{1/4} \|f_1\|_2 \|f_k\|_2,$$
and the inequality in the statement of the lemma follows.
Since this last bound depends continuously on the positioning of the points of~$P$ (which gives the value of~$\|v_k\|$), the claim that the obtained constant $C_P$ can be made uniform inside a neighborhood of $P$ also follows.
\end{proof}

We remark that the proof above is the only place (in the Euclidean setting) where we make explicit use of the assumption that a configuration is admissible.
However, as the Counting Lemma will be an essential ingredient of several later results, this requirement will be inherited by them as well.

\subsection{Continuity properties of the counting function}

Given some configuration $P$ on the space $\R^d$,
it is sometimes important to understand how much the count of congruent copies of $P$ on a set $A \subseteq \R^d$ can change if we perturb the set $A$ a little.
An instance of this problem was already considered in the Counting Lemma, where the perturbation was given by blurring and it was seen that the counting function $I_P$ is somewhat robust to small perturbations
(in the case of admissible configurations).

Using our telescoping sum trick, it is easy to show that $I_P$ is also robust to small perturbations measured by the $L^{\infty}$ norm;
more precisely, $I_P$ is continuous on $L^{\infty}(Q(0, R))$ for any fixed $R > 0$.
When $P$ is admissible, we obtain the following significantly stronger continuity property:

\begin{lem}[Weak$^*$ continuity] \label{lem:weak*cont_space}
If $P \subset \R^{d}$ is an admissible configuration, then for every fixed $R > 0$ the function $I_P$ is weak$^*$ continuous on the unit ball of $L^{\infty}(Q(0, R))$.
\end{lem}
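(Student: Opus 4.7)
Since $L^1(Q(0, R))$ is separable, the unit ball of $L^\infty(Q(0, R))$ is metrizable in the weak$^*$ topology, so it suffices to prove sequential continuity. Taking a sequence $(f_n)_{n \geq 1}$ in this unit ball converging weak$^*$ to some $f$ and fixing $\varepsilon > 0$, the strategy is to insert the blurring operator $\mathcal{Q}_\delta$ via the telescopic decomposition
$$I_P(f_n) - I_P(f) = \bigl[I_P(f_n) - I_P(f_n * \mathcal{Q}_\delta)\bigr] + \bigl[I_P(f_n * \mathcal{Q}_\delta) - I_P(f * \mathcal{Q}_\delta)\bigr] + \bigl[I_P(f * \mathcal{Q}_\delta) - I_P(f)\bigr],$$
and to handle the outer and middle differences by two entirely different mechanisms. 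For the two outer terms I would invoke Lemma~\ref{lem:counting_space}: although it is stated for indicator functions, its proof actually yields the uniform bound $|I_P(g) - I_P(g * \mathcal{Q}_\delta)| \leq C_{P,R}\delta^{1/4}$ for every measurable $g \colon Q(0, R) \to [-1, 1]$, since the argument passes through the expression~(\ref{whattoboundspace}), which is already set up for arbitrary $[-1,1]$-valued functions. Choosing $\delta$ sufficiently small then makes both outer terms at most $\varepsilon/3$, uniformly in $n$.

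For the middle difference, the crux is that for each $x \in \mathbb{R}^d$ the map $y \mapsto \mathcal{Q}_\delta(x - y)$ lies in $L^1(Q(0, R))$, so weak$^*$ convergence delivers the pointwise limit $(f_n * \mathcal{Q}_\delta)(x) \to (f * \mathcal{Q}_\delta)(x)$ for every $x$. Consequently, for each fixed $(x, T) \in \mathbb{R}^d \times \SO(d)$ the product $\prod_{i=1}^{k} (f_n * \mathcal{Q}_\delta)(x + Tv_i)$ converges to the analogous product with $f$, is dominated by $1$, and vanishes outside the bounded set where $x \in Q(0, R + \delta + \diam P)$. The dominated convergence theorem then gives $I_P(f_n * \mathcal{Q}_\delta) \to I_P(f * \mathcal{Q}_\delta)$, so the middle term falls below $\varepsilon/3$ for all sufficiently large $n$; combining the three estimates completes the argument.

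\textbf{Main obstacle.} The conceptually delicate point is the first step, where admissibility of $P$ really enters: without the uniform control over the blurring error afforded by Lemma~\ref{lem:counting_space}, one could not choose a single $\delta$ that works simultaneously for every $f_n$, and this uniformity is precisely what breaks down for non-admissible configurations. A minor technicality to check is that $f * \mathcal{Q}_\delta$ is supported in the enlarged cube $Q(0, R + \delta)$ rather than $Q(0, R)$, but this merely inflates the constant in the Counting Lemma by a harmless factor $(1 + \delta/R)^{d/2}$ coming from the $L^2$-norm estimate at the end of its proof.
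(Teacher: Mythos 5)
Your proof is correct, and for the middle term it takes a genuinely different route from the paper. Both proofs use the same framing (metrizability of the weak$^*$ unit ball, the three-term telescope through $\mathcal{Q}_\delta$, and the Counting Lemma applied to the two outer differences — where admissibility enters, exactly as you identify). For the middle difference $I_P(f_n * \mathcal{Q}_\delta) - I_P(f * \mathcal{Q}_\delta)$, the paper first observes the pointwise convergence $f_n * \mathcal{Q}_\delta(x) \to f * \mathcal{Q}_\delta(x)$, then upgrades it to $\|f_n * \mathcal{Q}_\delta - f * \mathcal{Q}_\delta\|_\infty \to 0$ using the fact that all these blurred functions are Lipschitz with a common constant on the bounded cube, and finally invokes the (easy, telescoping-sum) continuity of $I_P$ in $L^\infty$. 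You instead go from the same pointwise convergence directly to $I_P(f_n * \mathcal{Q}_\delta) \to I_P(f * \mathcal{Q}_\delta)$ via dominated convergence on the double integral $\int_{\mathbb{R}^d}\int_{\SO(d)}$, using boundedness of the integrand and compact support in $x$. Your route is a bit more direct (one step instead of two) and does not need the Arzel\`a--Ascoli-flavored observation; the paper's route yields the slightly stronger uniform-norm convergence of the blurred sequence, which costs nothing here but is perhaps more reusable. One small remark: the ``minor technicality'' about $f * \mathcal{Q}_\delta$ living in the enlarged cube $Q(0, R + \delta)$ is actually a non-issue in your argument, since the Counting Lemma is applied to $f_n$ and $f$ themselves (supported in $Q(0, R)$), never to their blurred versions, so no constant needs inflating.
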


\begin{proof}
Denote the closed unit ball of $L^{\infty}(Q(0, R))$ by $\ball_{\infty}$.
Since $\ball_{\infty}$ endowed with the weak$^*$ topology is metrizable (see e.g. \cite[Corollary~2.6.20]{Megginson98}), it suffices to prove that $I_P$ is sequentially continuous, i.e. that $I_P(f_i) \xrightarrow{i \rightarrow \infty} I_P(f)$ whenever $f_i \xrightarrow{i \rightarrow \infty} f$.

Suppose then $(f_i)_{i \geq 1} \subset \ball_{\infty}$ is a sequence weak$^*$ converging to $f \in \ball_{\infty}$.
It follows that, for every $x \in Q(0, R)$ and every $\delta > 0$, we have
\begin{equation*}
    f_i * \Qcal_{\delta}(x) = \delta^{-d} \int_{Q(x, \delta)} f_i(y) \,dy
    \,\xrightarrow{i \rightarrow \infty}\, \delta^{-d} \int_{Q(x, \delta)} f(y) \,dy = f * \Qcal_{\delta}(x).
\end{equation*}
Since $f * \Qcal_{\delta}$ and each $f_i * \Qcal_{\delta}$ are Lipschitz with the same constant (depending only on $\delta$, as $\|f\|_{\infty}$, $\|f_i\|_{\infty} \leq 1$) and $Q(0, R)$ is bounded, this easily implies that
$$\|f_i * \Qcal_{\delta} - f * \Qcal_{\delta}\|_{\infty} \rightarrow 0 \hspace{3mm} \text{as } i \rightarrow \infty.$$
In particular, it follows that $\lim_{i \rightarrow \infty} I_P(f_i * \Qcal_{\delta}) = I_P(f * \Qcal_{\delta})$.

Since $P$ is admissible, by the Counting Lemma (Lemma~\ref{lem:counting_space}) we have
$$|I_P(f * \Qcal_{\delta}) - I_P(f)|, \hspace{2mm} |I_P(f_i * \Qcal_{\delta}) - I_P(f_i)| \leq C_P \delta^{1/4} R^d \hspace{3mm} \text{for all } i \geq 1.$$
Choosing $i_0(\delta) \geq 1$ sufficiently large so that
$$|I_P(f_i * \Qcal_{\delta}) - I_P(f * \Qcal_{\delta})| \leq C_P \delta^{1/4} R^d \hspace{5mm} \text{for all } i \geq i_0(\delta),$$
we conclude that
\begin{align*}
    |I_P(f) - I_P(f_i)| &\leq |I_P(f) - I_P(f * \Qcal_{\delta})| + |I_P(f * \Qcal_{\delta}) - I_P(f_i * \Qcal_{\delta})| \\
    & \hspace{1cm} + |I_P(f_i * \Qcal_{\delta}) - I_P(f_i)| \\
    &\leq 3 C_P \delta^{1/4} R^d \hspace{5mm} \text{for all } i \geq i_0(\delta).
\end{align*}
Since $\delta > 0$ is arbitrary, this implies that $\lim_{i \rightarrow \infty} I_P(f_i) = I_P(f)$, as wished.
\end{proof}

We will also need an \emph{equicontinuity} property for the family of counting functions $P \mapsto I_P(A)$, over all bounded measurable sets $A \subseteq \R^d$.
In what follows we shall write $\ball(P, r) \subset (\R^d)^k$ for the ball of radius $r$
centered on $P = \{v_1, \dots, v_k\}$, where we recall that the distance from $P$ to $Q = \{u_1, \dots, u_k\}$ is given by
$$\|P - Q\|_{\infty} = \min_{\sigma \in \mathfrak{S}_k} \max_{1 \leq i \leq k} \|v_i - u_{\sigma(i)}\|.$$

\begin{lem}[Equicontinuity] \label{lem:equicont_space}
For every admissible $P \subset \R^d$ and every $\varepsilon > 0$ there is $\delta > 0$ such that the following holds:
if $P' \in \ball(P, \delta)$, then for all $R \geq 1$ we have
$$|I_{P'}(A) - I_P(A)| \leq \varepsilon R^d \hspace{3mm} \text{for all measurable } A \subseteq Q(0, R).$$
\end{lem}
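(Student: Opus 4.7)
The plan is to bridge $I_P(A)$ and $I_{P'}(A)$ via their blurred analogues $I_P(g)$ and $I_{P'}(g)$, where $g := A * \Qcal_{\delta_0}$ for a suitably small $\delta_0 > 0$ chosen in advance. By the uniformity clause of the Counting Lemma (Lemma~\ref{lem:counting_space}), there is a neighborhood $U$ of $P$ and a constant $C_P$ so that for every $P' \in U$, every $R > 0$, and every measurable $A \subseteq Q(0, R)$, the two blurring errors $|I_P(A) - I_P(g)|$ and $|I_{P'}(A) - I_{P'}(g)|$ are each at most $C_P \delta_0^{1/4} R^d$. Fixing $\delta_0$ small enough that $C_P \delta_0^{1/4} \leq \varepsilon/3$ controls these two error terms.

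The remaining task is to bound $|I_P(g) - I_{P'}(g)|$ for this fixed $\delta_0$. The key observation is that $g$ takes values in $[0,1]$ and is globally Lipschitz with constant $L = L(d, \delta_0)$, since $|g(x) - g(x')| \leq \delta_0^{-d}\, \meas(Q(x, \delta_0) \triangle Q(x', \delta_0)) \lesssim \sqrt{d}\,\|x - x'\|/\delta_0$. After WLOG translating so that one point of $P$ (and the corresponding point of $P'$) sits at the origin — which changes neither $I_P$, nor $I_{P'}$, nor the distance $\|P' - P\|_\infty$ — every point of $P \cup P'$ lies within distance $\diam P + \delta$ of the origin, and so the integrand of $I_P(g) - I_{P'}(g)$ is supported in $x \in Q(0, R + M)$ for some constant $M = M(P, \delta_0)$. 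Applying the telescoping sum trick to
$$\prod_{j} g(x + Tv_j) - \prod_{j} g(x + Tu_j),$$
we express this as a sum of $k$ terms, each of the form (bounded factors)$\,\times (g(x + Tv_i) - g(x + Tu_i))$, whose absolute value is at most $L\delta$. Integrating yields $|I_P(g) - I_{P'}(g)| \leq k L \delta (2R + M)^d$.

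The one subtlety is the $R$-dependence of the prefactor $(2R+M)^d$, which is only uniformly comparable to $R^d$ once $R$ is of order $\diam P$ or larger. This is resolved by a case split: for $R < \diam P/(2\sqrt{d})$, no subset of $Q(0, R)$ can contain a congruent copy of $P$ (nor of $P'$, once $\delta$ is small enough that $\diam P' \geq \diam P/2$), so $I_P(A) = I_{P'}(A) = 0$ and the conclusion is trivial; for $R \geq \diam P/(2\sqrt{d})$ we have $(2R+M)^d \leq \tilde C_P R^d$, so taking $\delta$ small enough (in terms of $\varepsilon$, $L$, $k$, and $\tilde C_P$) that $k L \delta \tilde C_P \leq \varepsilon/3$ makes the remaining contribution at most $\varepsilon R^d/3$. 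Summing the three error terms gives $|I_P(A) - I_{P'}(A)| \leq \varepsilon R^d$, as desired.
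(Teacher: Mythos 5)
Your proof is correct and follows essentially the same route as the paper: both fix the blurring scale first to control the two Counting-Lemma error terms, then use the Lipschitz/uniform-modulus continuity of $A * \Qcal_{\delta_0}$ together with the telescoping sum trick to bound $|I_P(g) - I_{P'}(g)|$. The one point where you are more careful than the paper is the case split for small $R$: the paper writes the telescoping bound with a bare $R^d$ even though the $x$-integration really sees a cube of side $\sim R + \diam P + \delta_0$, whereas you explicitly observe that for $R \lesssim \diam P$ both counting functions vanish, which cleanly closes this minor gap.
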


\begin{proof}
We will use the fact that the constant $C_P$ promised in the Counting Lemma can be made uniform inside a small neighborhood of $P$;
more precisely, there is $r > 0$ and a constant $\tilde{C}_P > 0$ such that
$$|I_{P'}(A) - I_{P'}(A * \Qcal_{\rho})| \leq \tilde{C}_P \rho^{1/4} R^d \hspace{5mm} \text{for all } \rho \in (0, 1]$$
holds for all $P' \in \ball(P, r)$, $R > 0$ and (measurable) $A \subseteq Q(0, R)$.

Fix constants $R \geq 1$ and $\delta, \rho \in (0, 1]$ with $\delta < \rho$.
For any set $A \subseteq Q(0, R)$ and any points $x,\, y \in Q(0, R)$ with $\|x - y\| \leq \delta$, we have that
\begin{align*}
    |A * \Qcal_{\rho}(x) - A * \Qcal_{\rho}(y)|
    &= \frac{\big| \meas(A \cap Q(x,\, \rho)) - \meas(A \cap Q(y,\, \rho)) \big|}{\rho^d} \\
    &\leq \frac {\meas(Q(x,\, \rho) \setminus Q(y,\, \rho))}{\rho^d} \\
    &\leq \frac{\rho^d - (\rho - \delta)^d}{\rho^d}.
\end{align*}
Noting that $A * \Qcal_{\rho}$ is supported on $Q(0, R+\rho) \subseteq Q(0, 2R)$, we conclude from our telescoping sum trick that
$$|I_{P'}(A * \Qcal_{\rho}) - I_P(A * \Qcal_{\rho})| \leq k \frac{\rho^d - (\rho - \delta)^d}{\rho^d} (2R)^d$$
whenever $\|P' - P\|_{\infty} \leq \delta$.

Now take $\rho \in (0, 1]$ small enough so that $\tilde{C}_P \rho^{1/4} \leq \varepsilon/4$, and for this value of $\rho$ take $0 < \delta < \min\{r,\, \rho\}$ small enough so that
$$(\rho - \delta)^d \geq \Big(1 - \frac{\varepsilon}{2^{d+1} k}\Big) \rho^d.$$
Then, for any configuration $P' \in \ball(P,\, \delta)$ and any set $A \subseteq Q(0, R)$, we obtain
\begin{align*}
    |I_{P'}(A) - I_P(A)| &\leq |I_{P'}(A) - I_{P'}(A * \Qcal_{\rho})| + |I_{P'}(A * \Qcal_{\rho}) - I_P(A * \Qcal_{\rho})| \\
    & \hspace{1cm}
    + |I_P(A * \Qcal_{\rho}) - I_P(A)| \\
    &\leq \tilde{C}_P \rho^{1/4} R^d + k \frac{\rho^d - (\rho - \delta)^d}{\rho^d} (2R)^d + \tilde{C}_P \rho^{1/4} R^d \\
    &\leq \frac{\varepsilon}{4} R^d + k \frac{\varepsilon}{2^{d+1} k} (2R)^d + \frac{\varepsilon}{4} R^d
    = \varepsilon R^d,
\end{align*}
as desired.
\end{proof}

\subsection{The Supersaturation Theorem}

Now we wish to show that geometrical hypergraphs encoding copies of some admissible configuration $P$ have a nice \emph{supersaturation property}:
if a set $A \subseteq \R^d$ is just slightly denser than the independence density of $P$, then it must contain a positive proportion of all congruent copies of $P$.
This result is quite similar, both formally and in spirit, to an important combinatorial theorem of Erd\H{o}s and Simonovits~\cite{Supersaturation} in the setting of forbidden graphs and hypergraphs.

\begin{remark}
The insight that supersaturation results can be used to better study extremal geometrical problems of the kind we are interested in is due to Bukh~\cite{Bukh}.
He introduced the notion of a `supersaturable property' as any characteristic of measurable sets which satisfies several conditions meant to enable the proof of a supersaturation result;
the prototypical and most important example of supersaturable property given in Bukh's paper is that of avoiding a finite collection of distances.
Here we will obtain similar results in the case of avoiding general admissible configurations, but our method of proof is more analytical in nature and quite different from his.
\end{remark}

Using our zero-measure removal lemma (Lemma~\ref{lem:zero_meas_space}), we can immediately obtain a weak supersaturation property which holds for any $R > 0$ and any configuration $P \subset \R^d$:
\begin{itemize}
    \item[(WS)] If $d_{Q(0, R)}(A) > \m_{Q(0, R)}(P)$, then $I_P(A) > 0$.
\end{itemize}
For our purposes, however, we will need to strengthen this simple property in two ways:
first to obtain a uniform lower bound on $I_P(A)$ which depends only on $R$ and the slack $d_{Q(0, R)}(A) - \m_{Q(0, R)}(P)$, but not on the specific set $A \subseteq Q(0, R)$;
and then to make the proportion $I_P(A \cap Q(0, R))/R^d$ of copies of $P$ uniform also on the size $R$ of the cube considered.

The first strengthening can be obtained from (WS) by a compactness argument, using the fact that the counting function of admissible configurations is weak$^*$ continuous:

\begin{lem}[Weak supersaturation] \label{lem:weak_supersat}
Let $P \subset \R^d$ be an admissible configuration.
For every $R > 0$ and $\varepsilon > 0$ there exists $c_0 > 0$ such that the following holds:
whenever $A \subseteq Q(0, R)$ satisfies
$d_{Q(0, R)}(A) \geq \m_{Q(0, R)}(P) + \varepsilon$, we have $I_P(A) \geq c_0$.
\end{lem}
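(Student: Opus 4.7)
The plan is to argue by contradiction via a weak$^*$ compactness argument, using the continuity properties already established and the zero-measure removal lemma to convert an analytic limit back into a genuine $P$-avoiding set.

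First I would suppose for contradiction that no such $c_0$ exists. Then for every $n \geq 1$ one can choose a measurable set $A_n \subseteq Q(0, R)$ with $d_{Q(0,R)}(A_n) \geq \m_{Q(0,R)}(P) + \varepsilon$ and $I_P(A_n) \leq 1/n$. I would view the indicator functions $\mathbbm{1}_{A_n}$ as elements of the closed unit ball $\ball_{\infty}$ of $L^{\infty}(Q(0,R))$. Since $L^1(Q(0,R))$ is separable, Banach--Alaoglu tells us $\ball_{\infty}$ is weak$^*$ compact and metrizable, so after passing to a subsequence we may assume $\mathbbm{1}_{A_n} \xrightarrow{w^*} f$ for some $f \in \ball_{\infty}$. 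Testing against $g = \mathbbm{1}_{Q(0,R)} \in L^1$ gives $\int f \geq (\m_{Q(0,R)}(P) + \varepsilon) R^d$, and testing against nonnegative $g \in L^1$ shows $0 \leq f \leq 1$ a.e.

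Next I would invoke Lemma~\ref{lem:weak*cont_space}: since $P$ is admissible, $I_P$ is weak$^*$ continuous on $\ball_{\infty}$, hence $I_P(f) = \lim_n I_P(\mathbbm{1}_{A_n}) = 0$. The integrand defining $I_P(f)$ is nonnegative, so it vanishes for almost every $(x, T) \in \mathbb{R}^d \times \SO(d)$; that is, for a.e.\ $(x, T)$ at least one factor $f(x + T v_i)$ equals $0$. Let $S := \{x \in Q(0, R) :\, f(x) > 0\}$. Then $\meas(S) \geq \int f \geq (\m_{Q(0,R)}(P) + \varepsilon) R^d$, and the vanishing property just observed translates directly into $I_P(S) = 0$, since $\prod_i \mathbbm{1}_S(x + T v_i) = 1$ forces $\prod_i f(x + T v_i) > 0$.

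Finally, I would apply the zero-measure removal lemma (Lemma~\ref{lem:zero_meas_space}) to $S$: we obtain $S' \subseteq S$ with $\meas(S \setminus S') = 0$ and no congruent copy of $P$ contained in $S'$. But then $S' \subseteq Q(0, R)$ avoids $P$ while having $d_{Q(0,R)}(S') \geq \m_{Q(0,R)}(P) + \varepsilon$, contradicting the definition of $\m_{Q(0,R)}(P)$. The main conceptual step, and the only place where admissibility is actually used, is the weak$^*$ continuity of $I_P$ in Step~2; the rest is an essentially formal compactness argument, with the small subtlety that one must pass from the limit function $f$ (a priori only in $[0,1]$) back to an honest set via its positivity set before invoking zero-measure removal.
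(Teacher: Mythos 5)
Your proof is correct and follows essentially the same route as the paper's: contradiction, weak$^*$ compactness of $\ball_\infty$, weak$^*$ continuity of $I_P$ (Lemma~\ref{lem:weak*cont_space}) to get $I_P(f)=0$, then pass from the $[0,1]$-valued limit to an honest $P$-avoiding set via Lemma~\ref{lem:zero_meas_space}. The only cosmetic difference is that you take the support $\{f>0\}$ and argue $I_P(S)=0$ by a.e.-vanishing of the nonnegative integrand, whereas the paper takes the superlevel set $\{f \ge \varepsilon\}$ and uses the pointwise inequality $\varepsilon B \le f$ to get $I_P(B) \le \varepsilon^{-k} I_P(f) = 0$; both are valid and land in the same contradiction.
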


\begin{proof}
Suppose for contradiction that the result is false.
Then there exist $\varepsilon > 0$, $R > 0$ and a sequence $(A_i)_{i \geq 1}$ of subsets of $Q(0, R)$, each of density at least $\m_{Q(0, R)}(P) + \varepsilon$, which satisfy $\lim_{i \rightarrow \infty} I_P(A_i) = 0$.

The unit ball $\ball_{\infty}$ of $L^{\infty}(Q(0, R))$ is weak$^*$ compact by the Banach-Alaoglu Theorem, and it is also metrizable in this topology (see \cite[Chapter~2.6]{Megginson98}).
By possibly restricting to a subsequence, we may then assume that $(A_i)_{i \geq 1}$ converges in the weak$^*$ topology of $L^{\infty}(Q(0, R))$;
let us denote its limit by $A \in \ball_{\infty}$.
It is clear that $0 \leq A \leq 1$ almost everywhere, and
$$\frac{1}{R^d} \int_{Q(0, R)} A(x) \,dx = \lim_{i \rightarrow \infty} \frac{1}{R^d} \int_{Q(0, R)} A_i(x) \,dx \geq \m_{Q(0, R)}(P) + \varepsilon.$$
By weak$^*$ continuity of $I_P$ (Lemma \ref{lem:weak*cont_space}), we also have $I_P(A) = \lim_{i \rightarrow \infty} I_P(A_i) = 0$.

Now let $B := \big\{x \in Q(0, R):\, A(x) \geq \varepsilon\big\}$.
Since
$$\varepsilon B(x) \leq A(x) < \varepsilon + B(x) \hspace{3mm} \text{for a.e. } x \in Q(0, R),$$
we conclude that $I_P(B) \leq \varepsilon^{-k} I_P(A) = 0$ and
$$d_{Q(0, R)}(B) > \frac{1}{R^d} \int_{Q(0, R)} A(x) \,d\sigma(x) - \varepsilon \geq \m_{Q(0, R)}(P).$$
But this set $B$ contradicts (WS) (or Lemma~\ref{lem:zero_meas_space}), finishing the proof.
\end{proof}

Our desired supersaturation result now follows from a simple averaging argument:

\begin{thm}[Supersaturation Theorem] \label{thm:supersat_space}
Let $P \subset \R^d$ be an admissible configuration and let $\varepsilon > 0$.
There exist constants $c > 0$ and $R_0 > 0$ such that the following holds for all $R \geq R_0$:
if $A \subseteq Q(0, R)$ satisfies
$$d_{Q(0, R)}(A) \geq \m_{Q(0, R)}(P) + \varepsilon,$$
then $I_P(A) \geq c R^d$.
\end{thm}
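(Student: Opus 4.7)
The plan is to promote the Weak Supersaturation Lemma (Lemma~\ref{lem:weak_supersat}) from ``one bounded cube with uniform constant'' to a bound proportional to $R^d$ by averaging over sub-cubes of a fixed intermediate scale $s$. Write $P = \{v_1, \dots, v_k\}$. First I would use Lemma~\ref{lem:bounds_space} to pick $s > \diam P$ large enough that $\m_{Q(0,s)}(P) \leq \m_{\mathbb{R}^d}(P) + \varepsilon/4$; combined with $\m_{\mathbb{R}^d}(P) \leq \m_{Q(0,R)}(P)$ this gives $\m_{Q(0,s)}(P) - \m_{Q(0,R)}(P) \leq \varepsilon/4$ for every $R \geq s$. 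Applying Lemma~\ref{lem:weak_supersat} on the cube $Q(x, s)$ with slack parameter $\varepsilon/4$ (and using translation invariance of both $\m_{Q(\cdot,s)}$ and $I_P$) then yields a constant $c_0 = c_0(P, \varepsilon) > 0$ such that any measurable $B \subseteq Q(x, s)$ with $d_{Q(x,s)}(B) \geq \m_{Q(0,s)}(P) + \varepsilon/4$ satisfies $I_P(B) \geq c_0$.

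Suppose now $R \geq R_0$ (with $R_0 \gg s$ to be fixed) and $A \subseteq Q(0, R)$ satisfies $d_{Q(0,R)}(A) \geq \m_{Q(0,R)}(P) + \varepsilon$, so that $d_{Q(0,R)}(A) \geq \m_{Q(0,s)}(P) + 3\varepsilon/4$. A Fubini computation gives
$$\frac{1}{(R-s)^d}\int_{Q(0, R-s)} d_{Q(x, s)}(A)\, dx \,\geq\, \frac{\meas(A \cap Q(0, R-2s))}{(R-s)^d} \,\geq\, d_{Q(0,R)}(A) - \frac{R^d - (R-2s)^d}{(R-s)^d},$$
and for $R_0$ large enough (depending only on $s$ and $\varepsilon$) the right-hand side exceeds $\m_{Q(0,s)}(P) + 5\varepsilon/8$. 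Since $d_{Q(x,s)}(A) \leq 1$ pointwise, a two-term bound on this average then forces the set
$$X := \bigl\{x \in Q(0, R-s):\, d_{Q(x,s)}(A) \geq \m_{Q(0,s)}(P) + \varepsilon/4\bigr\}$$
to have measure at least $(3\varepsilon/8)(R-s)^d \geq c_1 R^d$ for some $c_1 = c_1(\varepsilon) > 0$ (after enlarging $R_0$ if needed).

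For every $x \in X$ the first step gives $I_P(A \cap Q(x, s)) \geq c_0$. Integrating over $X$ and swapping the order of integration,
$$c_0 c_1 R^d \,\leq\, \int_X I_P(A \cap Q(x, s))\, dx \,\leq\, \int_{\mathbb{R}^d} \!\!\int_{\SO(d)} \prod_{i=1}^k A(z + Tv_i)\, \meas\bigl\{x:\, z + Tv_i \in Q(x, s)\ \forall i\bigr\}\, d\mu(T)\, dz.$$
The inner measure equals the volume of the intersection of the $k$ translated cubes $(z + Tv_i) + Q(0, s)$ and so is at most $s^d$, giving $I_P(A) \geq c_0 c_1 R^d / s^d$; setting $c := c_0 c_1 / s^d$ finishes the argument. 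The only real obstacle I foresee is bookkeeping: calibrating $R_0$ so that the boundary discrepancies in the averaging step (the term $(R^d - (R-2s)^d)/(R-s)^d$ and the mismatch between $R^d$ and $(R-s)^d$) are each absorbed into a fraction of $\varepsilon$. No new ideas beyond Lemma~\ref{lem:weak_supersat} and Fubini are needed.
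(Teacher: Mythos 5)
Your proof is correct and follows the same strategy as the paper: reduce to the weak supersaturation lemma (Lemma~\ref{lem:weak_supersat}) at a fixed intermediate scale, then show that a positive proportion of sub-cubes at that scale must inherit the density surplus, and sum up. The only variation is in the averaging: the paper tiles $Q(0, KR_1)$ discretely into $K^d$ \emph{disjoint} sub-cubes, so the final bound $I_P(A) \geq \sum_j I_P(A \cap Q_j)$ is immediate, whereas you slide the cube $Q(x,s)$ continuously and then need the extra Fubini swap (paying a harmless $s^d$ overcounting factor) to convert $\int_X I_P(A \cap Q(x,s))\,dx$ into a lower bound on $I_P(A)$.
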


\begin{proof}
Take $R_1 > 0$ large enough so that $\m_{Q(0, R_1)}(P) \leq \m_{\R^d}(P) + \varepsilon/4$
(see Lemma~\ref{lem:bounds_space}).
We will show that the conclusion of the theorem holds for $R_0 = 4 d R_1/\varepsilon$ and some constant $c > 0$ to be chosen later.

Suppose $R \geq 4 d R_1/\varepsilon$, and let $A \subseteq Q(0, R)$ be a set of density
$$d_{Q(0, R)}(A) \geq \m_{Q(0, R)}(P) + \varepsilon.$$
Since
$\m_{Q(0, R_1)}(P) \leq \m_{\R^d}(P) + \varepsilon/4 \leq \m_{Q(0, R)}(P) + \varepsilon/4,$
we have that
$$\meas(A) = \meas\big(A \cap Q(0, R)\big) \geq \big(\m_{Q(0, R_1)}(P) + 3\varepsilon/4\big) R^d.$$

Let $K := \lfloor R/R_1 \rfloor$, and note that
$$K^d R_1^d > \bigg( 1 - \frac{R_1}{R} \bigg)^d R^d \geq \bigg( 1 - \frac{d R_1}{R} \bigg) R^d.$$
By our assumption on $R$ we conclude that $K^d R_1^d \geq (1 - \varepsilon/4) R^d$, and thus
\begin{align*}
    \meas\big(A \cap Q(0, K R_1)\big)
    &\geq \meas(A) - \meas\big(Q(0, R) \setminus Q(0, K R_1)\big) \\
    &\geq \big(\m_{Q(0, R_1)}(P) + \varepsilon/2\big) K^d R_1^d.
\end{align*}
Partitioning the cube $Q(0, K R_1)$ into $K^d$ cubes of side length $R_1$, by averaging we conclude that at least $\varepsilon K^d/4$ of these cubes $Q(x, R_1)$ satisfy
\begin{equation} \label{eq:high_dens}
    \meas\big(A \cap Q(x, R_1)\big) \geq \big(\m_{Q(0, R_1)}(P) + \varepsilon/4\big) R_1^d.
\end{equation}
By Lemma~\ref{lem:weak_supersat}, there is some $c_0 > 0$ (depending on $R_1$ and $\varepsilon$ but not on $A$ or $R$) such that $I_P(A \cap Q(x, R_1)) \geq c_0$ holds for each one of the cubes in the partition satisfying~\eqref{eq:high_dens};
summing up all these values we conclude that
$$I_P(A) \geq \frac{\varepsilon K^d}{4} c_0 > \frac{\varepsilon c_0}{4} \bigg(\frac{R}{R_1} - 1\bigg)^d
\geq \frac{\varepsilon c_0}{4} \frac{1}{(2R_1)^d} R^d,$$
finishing the proof for $c = \varepsilon c_0/(2^{d+2} R_1^d)$.
\end{proof}

\begin{remark}
The arguments used in the proofs of Lemma~\ref{lem:weak_supersat} and Theorem~\ref{thm:supersat_space} easily extend to the case of several configurations $P_1, \dots, P_n \subset \R^d$, with only minor and notational modifications.
In the case of the Supersaturation Theorem, one concludes that $I_{P_i}(A) \geq c(\varepsilon) R^d$ holds for some $1 \leq i \leq n$ whenever the density condition
$d_{Q(0, R)}(A) \geq \m_{Q(0, R)}(P_1,\, \dots,\, P_n) + \varepsilon$
is satisfied (assuming $R$ is large enough and all the configurations $P_i$ are admissible).
\end{remark}

Following Bukh~\cite{Bukh}, for each $\delta > 0$ and $\gamma > 0$ we define the \emph{zooming-out operator} $\mathcal{Z}_{\delta}(\gamma)$ as the map which takes a measurable set $A \subseteq \R^d$ to the set
$$\mathcal{Z}_{\delta}(\gamma)[A] := \big\{ x \in \R^d:\, d_{Q(x, \delta)}(A) \geq \gamma \big\}.$$
Intuitively, $\mathcal{Z}_{\delta}(\gamma)[A]$ represents the points where $A$ is not too sparse at scale $\delta$.

Using the Supersaturation Theorem together with the Counting Lemma, we can now show that the existence of copies of $P$ in a set $A$ follows also from the weaker assumption that its zoomed-out version $\mathcal{Z}_{\delta}(\gamma)[A]$ has density higher than $\m_{\R^d}(P)$ (rather than $A$ itself having this same density).
This property will be important for us later on.

\begin{cor} \label{cor:supersat_space}
Given an admissible configuration $P \subset \R^d$ and $\varepsilon > 0$, there exists $\delta_0 > 0$ such that the following holds for all $\delta \leq \delta_0$:
if $A \subseteq \R^d$ satisfies
$$\overline{d} \big(\mathcal{Z}_{\delta}(\varepsilon)[A]\big) \geq \m_{\R^d}(P) + \varepsilon,$$
then $A$ contains a congruent copy of $P$.
\end{cor}

\begin{proof}
Let $R_0$, $c > 0$ be the constants promised in the Supersaturation Theorem applied to $P$ and with $\varepsilon$ substituted by $\varepsilon/3$.
Up to substituting $R_0$ by some larger constant, we may also assume that $\m_{Q(0, R)}(P) \leq \m_{\R^d}(P) + \varepsilon/3$ for all $R \geq R_0$
(see Lemma~\ref{lem:bounds_space}).

Suppose $A \subseteq \R^d$ satisfies
$\overline{d} \big(\mathcal{Z}_{\delta}(\varepsilon)[A]\big) \geq \m_{\R^d}(P) + \varepsilon$ for some $0 < \delta \leq 1$.
Since
\begin{align*}
    \limsup_{R\rightarrow \infty} d_{Q(0,R)} \big(\mathcal{Z}_{\delta}(\varepsilon)[A \cap Q(0,R)]\big)
    &= \limsup_{R\rightarrow \infty} d_{Q(0,R)} \big(\mathcal{Z}_{\delta}(\varepsilon)[A]\big) \\
    &= \overline{d} \big(\mathcal{Z}_{\delta}(\varepsilon)[A]\big)
    \geq \m_{\R^d}(P) + \varepsilon,
\end{align*}
there must exist some $R \geq R_0$ such that
$$d_{Q(0,R)} \big(\mathcal{Z}_{\delta}(\varepsilon)[A \cap Q(0,R)]\big) \geq \m_{\R^d}(P) + 2\varepsilon/3.$$
Denoting $A' := A\cap Q(0,R)$, we may then assume that $A' \subseteq Q(0, R)$ satisfies
\begin{equation} \label{eq:zoom}
    d_{Q(0, R)} \big(\mathcal{Z}_{\delta}(\varepsilon)[A']\big) \geq \m_{Q(0, R)}(P) + \varepsilon/3
\end{equation}
for some $R \geq R_0$, and wish to show that $A'$ (and hence $A$) contains a copy of $P$ if $\delta > 0$ is small enough depending on $P$ and $\varepsilon$.

By the Supersaturation Theorem, inequality~\eqref{eq:zoom} implies that $I_P \big(\mathcal{Z}_{\delta}(\varepsilon)[A']\big) \geq c R^d$.
Since
$A' * \mathcal{Q}_{\delta}(x) \geq \varepsilon \cdot \mathcal{Z}_{\delta}(\varepsilon)[A'](x)$ for all $x \in \R^d$,
we obtain from the Counting Lemma that
\begin{align*}
    I_P(A') \geq I_P(A' * \mathcal{Q}_{\delta}) - C_P \delta^{1/4} R^d
    &\geq \varepsilon^{|P|} I_P \big(\mathcal{Z}_{\delta}(\varepsilon)[A']\big) - C_P \delta^{1/4} R^d \\
    &\geq \big(\varepsilon^{|P|} c - C_P \delta^{1/4}\big) R^d.
\end{align*}
Taking $\delta > 0$ small enough for this last expression to be positive we conclude that $I_P(A') > 0$, and so $A'$ contains a copy of $P$ as wished.
\end{proof}

\subsection{Results on the independence density} \label{sec:several_config}

We are finally in a position to properly study the independence density parameter for a family of configurations in Euclidean space.


We start by proving a simple lower bound on the independence density of several distinct configurations;
this result and the argument we use to prove it are originally due to Bukh~\cite{Bukh}.

\begin{lem}[Supermultiplicativity] \label{lem:lower_bound_space}
For all $n \geq 1$ and all configurations $P_1, \dots, P_n \subset \R^d$, we have that
$$\m_{\R^d}(P_1,\, P_2,\, \dots,\, P_n) \geq \prod_{i=1}^n \m_{\R^d}(P_i).$$
\end{lem}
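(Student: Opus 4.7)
My approach will be a probabilistic construction that combines periodic near-extremizers for each $P_i$ via independent random translations, exploiting the fact that translating a periodic set by a uniformly random element of a fundamental domain makes its indicator function constant in expectation. Intuitively, the different periodic patterns become statistically independent after the random shifts, so the expected density of the intersection factors.

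For any fixed $\varepsilon > 0$ and each $i = 1, \dots, n$, I will first use the construction appearing in the proof of Lemma~\ref{lem:bounds_space} to produce a set $\tilde{A}_i \subset \mathbb{R}^d$ which is $L\mathbb{Z}^d$-periodic (for a common period $L > 0$, obtained by enlarging the cubes in that lemma if the natural periods disagree), avoids $P_i$, and has density $d(\tilde{A}_i) \geq m_i - \varepsilon$, where I write $m_i := \m_{\mathbb{R}^d}(P_i)$. I will then draw independent uniform translations $x_i \in Q(0, L)$ and consider the $L\mathbb{Z}^d$-periodic random set $B := \bigcap_{i=1}^n (\tilde{A}_i + x_i)$. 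Since $P_i$-avoidance is translation-invariant and $B \subseteq \tilde{A}_i + x_i$ for every $i$, the set $B$ simultaneously avoids every $P_i$, regardless of the choice of translations.

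The core computation is then an expected-density calculation. For each fixed $y \in \mathbb{R}^d$, the $L\mathbb{Z}^d$-periodicity of $\tilde{A}_i$ together with the uniformity of $x_i$ on a fundamental domain give $\mathbb{P}[y \in \tilde{A}_i + x_i] = d(\tilde{A}_i)$; by independence of the translations and Fubini, the expected density of $B$ equals
\[
\mathbb{E}\big[ d(B) \big] \;=\; \frac{1}{L^d}\int_{Q(0, L)} \prod_{i=1}^n \mathbb{P}[y \in \tilde{A}_i + x_i] \,dy \;=\; \prod_{i=1}^n d(\tilde{A}_i) \;\geq\; \prod_{i=1}^n (m_i - \varepsilon).
\]
Hence some realization of the translations yields $d(B) \geq \prod_i (m_i - \varepsilon)$, and this $B$ is a periodic subset of $\mathbb{R}^d$ that avoids every $P_i$ and has upper density at least $\prod (m_i - \varepsilon)$. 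Letting $\varepsilon \to 0$ finishes the proof.

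I do not anticipate a genuine obstacle here, as the factorization of the expected indicator follows immediately from the periodicity of the $\tilde{A}_i$'s and the independence of the translations. The only mild technical care is in arranging for a common period $L$, which is handled by enlarging the ambient cubes in Lemma~\ref{lem:bounds_space} (at arbitrarily small density cost) before periodizing, so that every $\tilde{A}_i$ can be taken $L\mathbb{Z}^d$-periodic for a single $L$.
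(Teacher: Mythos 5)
Your proposal is correct and matches the paper's own proof essentially step for step: both constructions periodize near-optimal $P_i$-avoiding sets to share a common fundamental domain, then average the density of the intersection over independent translates to factor the expectation as $\prod_i d(\tilde A_i)$. Your probabilistic phrasing (independence of the $x_i$, Fubini, and the fact that a uniform translate of a periodic set hits any fixed point with probability equal to the density) is exactly the averaging argument the paper carries out; the paper's bookkeeping of a common period via $A_i\subseteq Q(0,R-\diam P_i)$ periodized with period $R$ is the same device you describe as enlarging the cubes.
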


\begin{proof}
Fix $\varepsilon > 0$ and choose $R$ large enough so that
$$\min_{1 \leq i \leq n} (R - \diam P_i)^d \geq (1 - \varepsilon) R^d.$$
For each $1 \leq i \leq n$, let $A_i \subseteq Q(0, R - \diam P_i)$ be a set which avoids $P_i$ and satisfies $d_{Q(0, R - \diam P_i)}(A_i) > \m_{\R^d}(P_i) - \varepsilon$ (this is possible by Lemma~\ref{lem:bounds_space}).
We then construct the $R$-periodic set
$A_i' := A_i + R \mathbb{Z}^d$, which also avoids $P_i$ and has density
$$d(A_i') = \frac{(R - \diam P_i)^d}{R^d} d_{Q(0, R - \diam P_i)}(A_i) > \m_{\R^d}(P_i) - 2\varepsilon.$$

Since each set $A_i'$ is periodic with the same fundamental domain $Q(0, R)$, it follows that the average of $d\big(\bigcap_{i=1}^n (x_i + A_i')\big)$ over independent translates $x_1, \dots, x_n \in Q(0, R)$ is equal to $\prod_{i=1}^n d(A_i')$.
There must then exist some $x_1, \dots, x_n \in Q(0, R)$ for which $$d\bigg(\bigcap_{i=1}^n (x_i + A_i')\bigg) \geq \prod_{i=1}^n d(A_i') > \prod_{i=1}^n (\m_{\R^d}(P_i) - 2\varepsilon).$$
Since $\bigcap_{i=1}^n (x_i + A_i')$ avoids each of the configurations $P_i$ and $\varepsilon > 0$ was arbitrary, the desired lower bound follows.
\end{proof}

Intuitively, one may regard $\m_{\R^d}(P_1,\, P_2,\, \dots,\, P_n)$ being close to $\prod_{i=1}^n \m_{\R^d}(P_i)$ as some sort of \emph{independence} or \emph{lack of correlation} between the $n$ constraints of forbidding each configuration $P_i$;
in this case, there is no better way to choose a set avoiding all of these configurations than simply intersecting optimal $P_i$-avoiding sets for each $i$ (after suitably translating them).
One might then expect this to happen if the sizes of each $P_i$ are very different from each other, so that each constraint will be relevant in different and largely independent scales.

Our next result shows this is indeed the case whenever the configurations considered are all admissible.
(A theorem of Graham~\cite{Graham} implies this is \emph{not} necessarily true if the configurations considered are non-admissible;
see Section~\ref{Conclusion} for a discussion.)
The proof we present here is based on Bukh's arguments for supersaturable properties, and generalizes his result from two-point configurations to general admissible configurations.

\begin{thm}[Asymptotic independence] \label{thm:main_space}
If $P_1, P_2, \dots, P_n \subset \R^d$ are admissible configurations, then
$$\m_{\R^d}(t_1 P_1,\, t_2 P_2,\, \dots,\, t_n P_n) \rightarrow \prod_{i=1}^n \m_{\R^d}(P_i)$$
as the ratios $t_2/t_1,\, t_3/t_2,\, \dots,\, t_n/t_{n-1}$ tend to infinity.
\end{thm}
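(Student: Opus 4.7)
The argument proceeds by induction on $n$, the base case $n = 1$ being trivial and the lower bound $\m_{\mathbb{R}^d}(t_1 P_1, \dots, t_n P_n) \geq \prod_{i=1}^n \m_{\mathbb{R}^d}(P_i)$ being the content of the Supermultiplicativity lemma. The real task is the matching upper bound: given $\varepsilon > 0$, one must produce a threshold $L$ such that whenever all $n-1$ ratios $t_{i+1}/t_i$ exceed $L$, the inequality $\m_{\mathbb{R}^d}(t_1 P_1, \dots, t_n P_n) \leq \prod_{i=1}^n \m_{\mathbb{R}^d}(P_i) + \varepsilon$ holds. The strategy for the inductive step is to fix an arbitrary set $A$ avoiding the $n$ dilates and to analyse it at a carefully chosen \emph{intermediate scale} $\delta$ satisfying $\delta \gg t_{n-1}$ and $\delta \ll t_n$.

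At this scale I would derive two independent density bounds. The \emph{first} uses that $A$ avoids $t_n P_n$: since the $\delta_0$ appearing in Corollary~\ref{cor:supersat_space} scales linearly when one rescales the forbidden configuration, taking $\delta \leq t_n\,\delta_0(P_n, \alpha)$ yields
\begin{equation*}
    \overline{d}\big(\mathcal{Z}_{\delta}(\alpha) A\big) \leq \m_{\mathbb{R}^d}(P_n) + \alpha
\end{equation*}
for any fixed $\alpha > 0$. The \emph{second} uses that $A$ also avoids the $n-1$ smaller dilates: for every $x \in \mathbb{R}^d$, the set $A \cap Q(x,\delta)$ still avoids $t_1 P_1, \dots, t_{n-1} P_{n-1}$, so by translation invariance of Lebesgue measure and Lemma~\ref{lem:bounds_space},
\begin{equation*}
    (A * \mathcal{Q}_{\delta})(x) = d_{Q(x,\delta)}(A) \leq \Big(1 + \frac{t_{n-1}\,\Delta}{\delta}\Big)^d \m_{\mathbb{R}^d}(t_1 P_1, \dots, t_{n-1} P_{n-1}),
\end{equation*}
where $\Delta := \max_{i < n} \diam P_i$. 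Invoking the induction hypothesis on $P_1, \dots, P_{n-1}$ and taking $\delta/t_{n-1}$ large enough, this upper bound is at most $\prod_{i<n}\m_{\mathbb{R}^d}(P_i) + 2\varepsilon_1$ for any prescribed $\varepsilon_1 > 0$.

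Combining the two bounds is then a standard splitting argument. Writing $\overline{d}(A)$ as the limsup of the averages of $A * \mathcal{Q}_{\delta}$ over cubes $Q(0,R)$ (the boundary contribution being of order $\delta/R$ and hence vanishing as $R \to \infty$), and partitioning the integration region according to whether $x$ lies in $\mathcal{Z}_{\delta}(\alpha) A$ or not, one obtains
\begin{equation*}
    \overline{d}(A) \leq \Big(\prod_{i<n}\m_{\mathbb{R}^d}(P_i) + 2\varepsilon_1\Big)\big(\m_{\mathbb{R}^d}(P_n) + \alpha\big) + \alpha,
\end{equation*}
using the pointwise bound from the previous paragraph on $\mathcal{Z}_{\delta}(\alpha) A$ and the trivial bound $(A * \mathcal{Q}_{\delta})(x) < \alpha$ off the zoom set. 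Choosing $\alpha$ and $\varepsilon_1$ sufficiently small in terms of $\varepsilon$ closes the induction.

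The main obstacle is ensuring that an intermediate scale $\delta$ satisfying both constraints exists: the lower bound $\delta \geq C_1(\varepsilon, P_1, \dots, P_{n-1})\,t_{n-1}$ required by the second estimate and the upper bound $\delta \leq C_2(\varepsilon, P_n)\,t_n$ required by the first can be met simultaneously only once $t_n/t_{n-1}$ exceeds $C_1 C_2$, which is precisely how the new ratio threshold is produced. A consistent book-keeping of constants — verifying that all of them depend only on $\varepsilon$ and the shapes $P_i$ (and not on $A$ or the individual $t_i$) — is the only real care the argument demands; with that in hand, the Counting Lemma and supersaturation machinery developed in the previous subsections slot together cleanly.
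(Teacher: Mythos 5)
Your proposal is correct and follows essentially the same strategy as the paper's proof: induction on $n$, the supermultiplicativity lemma for the lower bound, and for the upper bound a two-scale argument combining Corollary~\ref{cor:supersat_space} (applied to $A/t_n$, exploiting that $\delta_0$ rescales linearly) with the finite-cube bound from Lemma~\ref{lem:bounds_space} applied to the remaining $n-1$ dilates, finished by averaging over cubes of the intermediate side length. The only cosmetic difference is organizational — the paper fixes the scales $t_1,\dots,t_{n-1}$ and the cube side $R_0$ before introducing $t_n$, whereas you fix $A$ and then locate an admissible intermediate scale $\delta \in [C_1 t_{n-1},\, C_2 t_n]$ — but the bounds, the constants, and the way the new ratio threshold emerges are identical.
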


\begin{proof}
We have already seen that
$$\m_{\R^d}(t_1 P_1,\, t_2 P_2,\, \dots,\, t_n P_n) \geq \prod_{i=1}^n \m_{\R^d}(t_i P_i) = \prod_{i=1}^n \m_{\R^d}(P_i)$$
always holds, so it suffices to show that $\m_{\R^d}(t_1 P_1,\, t_2 P_2,\, \dots,\, t_n P_n)$ is no larger than $\prod_{i=1}^n \m_{\R^d}(P_i) + \varepsilon$ whenever $\varepsilon > 0$ and the ratios between consecutive scales $t_i$ are large enough.
We shall proceed by induction, with the case $n = 1$ being trivial.

Let $n \geq 2$ and suppose the theorem holds for configurations $P_1, \dots, P_{n-1}$.
Fix $0 < \varepsilon \leq 1$, and let $t_1,\, \dots,\, t_{n-1} > 0$ be dilation parameters for which
$$\m_{\R^d}(t_1 P_1,\, \dots,\, t_{n-1} P_{n-1}) \leq \prod_{i=1}^{n-1} \m_{\R^d}(P_i) + \varepsilon;$$
now take $R_0 > 0$ large enough so that
$$\m_{Q(0, R)}(t_1 P_1,\, \dots,\, t_{n-1} P_{n-1}) \leq \m_{\R^d}(t_1 P_1,\, \dots,\, t_{n-1} P_{n-1}) + \varepsilon$$
holds for all $R \geq R_0$
(this quantity exists by Lemma~\ref{lem:bounds_space}).

If $A \subseteq \R^d$ is a measurable set avoiding $t_1 P_1,\, \dots,\, t_{n-1} P_{n-1}$, then clearly
\begin{equation} \label{eq:1}
    d_{Q(x, R)}(A) \leq \m_{Q(0, R)}(t_1 P_1,\, \dots,\, t_{n-1} P_{n-1}) \hspace{5mm} \text{for all } x \in \R^d,\, R > 0.
\end{equation}
Moreover, if $A$ also avoids $t_n P_n$ for some $t_n > 0$, then $A/t_n$ avoids $P_n$ and so by Corollary~\ref{cor:supersat_space} there is some $\delta_0 > 0$ (depending only on $P_n$ and $\varepsilon$) for which
\begin{equation} \label{eq:2}
    \overline{d}\big(\mathcal{Z}_{\delta}(\varepsilon)[A/t_n]\big) \leq \m_{\R^d}(P_n) + \varepsilon \hspace{5mm} \text{for all } \delta \leq \delta_0.
\end{equation}

Suppose now that $t_n \geq R_0/\delta_0$, and let $A \subseteq \R^d$ be any measurable set avoiding $t_1 P_1,\, \dots,\, t_n P_n$.
We conclude from (\ref{eq:1}) that
\begin{align*}
    d_{Q(x,\, t_n \delta_0)}(A) &\leq \m_{Q(0,\, t_n \delta_0)}(t_1 P_1,\, \dots,\, t_{n-1} P_{n-1}) \\
    &\leq \m_{\R^d}(t_1 P_1,\, \dots,\, t_{n-1} P_{n-1}) + \varepsilon \\
    &\leq \prod_{i=1}^{n-1} \m_{\R^d}(P_i) + 2\varepsilon
\end{align*}
holds for all $x \in \R^d$, and from (\ref{eq:2}) we have
$$\overline{d}\big(\mathcal{Z}_{t_n \delta_0}(\varepsilon)[A]\big) = \overline{d}\big(\mathcal{Z}_{\delta_0}(\varepsilon)[A/t_n]\big) \leq \m_{\R^d}(P_n) + \varepsilon.$$
This means that the density of $A$ inside cubes $Q(x, t_n \delta_0)$ of side length $t_n \delta_0$ is at most $\varepsilon$
(when $x \notin \mathcal{Z}_{t_n \delta_0}(\varepsilon)[A]$)
except at a set of upper density at most $\m_{\R^d}(P_n) + \varepsilon$, when it is instead no more than $\prod_{i=1}^{n-1} \m_{\R^d}(P_i) + 2\varepsilon$.
Taking averages, we conclude that
$$\overline{d}(A) \leq \varepsilon + \big( \m_{\R^d}(P_n) + \varepsilon \big) \bigg( \prod_{i=1}^{n-1} \m_{\R^d}(P_i) + 2\varepsilon \bigg)
\leq \prod_{i=1}^n \m_{\R^d}(P_i) + 6\varepsilon$$
(where we used that $0 < \varepsilon \leq 1$).
This inequality finishes the proof.
\end{proof}

As an immediate corollary of the last theorem, we conclude that
$$\m_{\R^d}(t_1 P,\, t_2 P,\, \dots,\, t_n P) \rightarrow \m_{\R^d}(P)^n$$
as $t_2/t_1,\, t_3/t_2,\, \dots,\, t_n/t_{n-1} \rightarrow \infty$ whenever $P \subset \R^d$ is admissible;
this answers our question (Q1) in the case of admissible configurations.
Let us now show how this result easily implies Bourgain's Theorem given in the Introduction:

\begin{proof}[Proof of Theorem~\ref{thmbourgain}]
Suppose $A \subset \R^d$ is a measurable set not satisfying the conclusion of the theorem;
thus there is a sequence $(t_j)_{j \geq 1}$ tending to infinity such that $A$ does not contain a copy of any $t_j P$.
This implies that $\overline{d}(A) \leq \m_{\R^d}(t_1 P,\, t_2 P, \dots,\, t_n P)$ for all $n \in \mathbb{N}$.
By taking a suitably fast-growing subsequence, we may then use Theorem~\ref{thm:main_space} to obtain (say) $\overline{d}(A) \leq 2 \m_{\R^d}(P)^n$ for any fixed $n \geq 1$.
Since $\m_{\R^d}(P) < 1$,\footnote{An easy averaging argument shows that $\m_{\R^d}(P) \leq 1 - 1/|P|$;
see Lemma~\ref{lem:reassuring} for a proof of this inequality in the spherical setting.}
this implies that $\overline{d}(A) = 0$, as wished.
\end{proof}

Going back to our study of the independence density for multiple configurations, we will now consider the opposite situation of what we have seen before:
when the constraints of forbidding each individual configuration are so strongly correlated as to be essentially redundant.
One might expect this to be the case, for instance, when we are forbidding very close dilates of a given configuration $P$.

We will show that this intuition is indeed correct, whether or not the configuration considered is admissible, and the proof is much simpler than in the case of very distant dilates of $P$ (in particular not needing the results from earlier sections).

\begin{lem}[Asymptotic redundancy] \label{lem:tend_to_one}
For any configuration $P \subset \R^d$, we have that
$$\m_{\R^d}(t_1 P,\, t_2 P,\, \dots,\, t_n P) \rightarrow \m_{\R^d}(P)$$
as $t_2/t_1,\, t_3/t_2,\, \dots,\, t_n/t_{n-1} \rightarrow 1$.
\end{lem}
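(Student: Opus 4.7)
The plan is to prove the two inequalities separately. The upper bound $\m_{\mathbb{R}^d}(t_1 P, \ldots, t_n P) \leq \m_{\mathbb{R}^d}(P)$ is immediate, since forbidding additional configurations can only decrease the independence density and the dilation invariance of the problem gives $\m_{\mathbb{R}^d}(t_1 P) = \m_{\mathbb{R}^d}(P)$. For the lower bound, I will use scale-invariance to reduce to the case $t_1 = 1$; then, since $t_i = \prod_{j<i}(t_{j+1}/t_j)$ is a product of finitely many factors each tending to $1$, all the $t_i$ lie in an arbitrarily small neighborhood of $1$ as soon as every consecutive ratio does.

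The core idea is to start from a set $A \subseteq Q(0,R)$ which avoids $P$ with $d_{Q(0,R)}(A) \geq \m_{\mathbb{R}^d}(P) - \varepsilon/2$ (chosen via Lemma~\ref{lem:bounds_space} for $R$ large), and pass to a ``Lebesgue-regular'' subset $A_0$ that avoids \emph{every} configuration $P'$ close enough to $P$. Concretely, the Lebesgue density theorem produces, for any $\gamma > 0$, a scale $\delta_0 > 0$ and a subset $A_0 \subseteq A$ with $d_{Q(0,R)}(A_0) \geq d_{Q(0,R)}(A) - \varepsilon/4$ such that $d_{Q(x,\delta_0)}(A) \geq 1-\gamma$ for every $x \in A_0$. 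Suppose for contradiction that $\{u_i\} = \{x_0 + T_0 v_i'\} \subseteq A_0$ is a copy of some $P' = \{v_1', \ldots, v_k'\}$ with $\max_i \|v_i - v_i'\| \leq \eta$, and consider the nearby $P$-copies $\{x_0 + y + T_0 v_i\}$ indexed by small shifts $y \in Q(0, \delta_0/10)$. For $\eta \leq \delta_0/10$, each point $x_0 + y + T_0 v_i$ lies inside $Q(u_i, \delta_0)$, where $A^c$ has density at most $\gamma$; a union bound over $i = 1, \ldots, k$ then shows that, provided $\gamma < 1/(k\cdot 10^d)$, a positive fraction of shifts $y$ yield a genuine $P$-copy inside $A$, contradicting that $A$ avoids $P$.

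Applying this observation uniformly with $P' = t_i P$ (translating $P$ so that $\max_j \|t_i v_j - v_j\| = |t_i - 1| \max_j \|v_j\|$), we find that for every $t_i$ sufficiently close to $1$ the set $A_0$ simultaneously avoids all of $t_1 P, \dots, t_n P$. Extending $A_0$ periodically and invoking Lemma~\ref{lem:bounds_space} once more yields a set in $\mathbb{R}^d$ of upper density at least $\m_{\mathbb{R}^d}(P) - \varepsilon$ avoiding every $t_i P$, which completes the argument. The main subtlety is the order in which the parameters must be chosen --- first $\gamma$ (depending only on $|P|$ and $d$, so that the union bound is effective), then $\delta_0$ (so that the regularized subset $A_0$ has nearly full density in $Q(0,R)$), and finally $\eta$ (small enough relative to $\delta_0$); it is worth noting that admissibility of $P$ plays no role anywhere in the argument, in line with the lemma's claim of full generality.
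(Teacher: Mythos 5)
Your proposal is correct, and it takes a genuinely different route from the paper's. The paper argues via compactness: inner-approximate the $P$-avoiding set $A \subseteq Q(0,R)$ by a compact set $\widetilde{A} \subseteq A$ of nearly the same density, introduce the continuous function $g_P(x_1, \dots, x_k, T) := \sum_{j} \|(x_j - x_1) - T(v_j - v_1)\|$ whose vanishing over $T \in \SO(d)$ characterizes congruence to $P$, note that $g_P$ attains a strictly positive minimum $\gamma$ on the compact set $\widetilde{A}^k \times \SO(d)$, and then deduce from the triangle inequality that $\widetilde{A}$ avoids $tP$ whenever $|t - 1| < \gamma/(k \cdot \diam P)$. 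Your argument instead passes (via Lebesgue density plus Egorov) to a subset $A_0 \subseteq A$ of nearly full density on which $A$ has density $\geq 1 - \gamma$ at some fixed scale $\delta_0$, and shows that any copy of a small perturbation $P'$ landing in $A_0$ would, by a union bound over random shifts $y \in Q(0, \delta_0/10)$, force a genuine $P$-copy into $A$, contradicting $P$-avoidance. Both extract a positive ``robustness radius'' from the finiteness of $\meas(A)$; the paper's is a clean qualitative compactness argument, while yours is a more hands-on measure-theoretic pigeonhole in the spirit of the zero-measure removal Lemma~\ref{lem:zero_meas_space}, and has the modest advantage of producing explicit quantitative bounds in terms of the Lebesgue-density scale $\delta_0$. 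You correctly observe that admissibility is not used; neither does the paper use it. The final step (periodize $A_0$ and appeal to the first inequality of Lemma~\ref{lem:bounds_space}) is fine, provided $R$ was chosen large enough at the start so that the $(1 + \diam/R)^d$ loss is absorbed into $\varepsilon$.
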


\begin{proof}
Assume by dilation invariance that $t_1 = 1$, and note that it suffices to show the convergence above with $\m_{\R^d}$ replaced by $\m_{Q(0, R)}$ for every fixed $R > 0$.
We will then fix an arbitrary $R > 0$ and prove that $\m_{Q(0, R)}(P,\, t_2 P,\, \dots,\, t_n P) \rightarrow \m_{Q(0, R)}(P)$ as $t_2,\, t_3,\, \dots,\, t_n \rightarrow 1$.

Let $(v_1, v_2, \dots, v_k)$ be an ordering of the points of $P$, and consider the continuous function $g_P:\, (\R^d)^k \times \SO(\R^d) \rightarrow \R$ given by
$$g_P(x_1,\, \dots,\, x_k,\, T) := \sum_{j=2}^k \| (x_j - x_1) - T (v_j - v_1) \|.$$
Note that $\min_{T \in \SO(\R^d)} g_P(x_1,\, \dots,\, x_k,\, T) = 0$ if and only if $(x_1, \dots, x_k)$ is congruent to $(v_1, \dots, v_k)$.

Fix some $\varepsilon > 0$, and let $A \subset Q(0, R)$ be a measurable set which avoids $P$ and has density $d_{Q(0, R)}(A) \geq \m_{Q(0, R)}(P) - \varepsilon$.
By inner regularity, we know there exists a compact set $\widetilde{A} \subseteq A$ with $d_{Q(0, R)}(\widetilde{A}) \geq \m_{Q(0, R)}(P) - 2\varepsilon$.
Denote by $\gamma$ the minimum of the continuous function $g_P$ on the compact set $\widetilde{A}^k \times \SO(\R^d)$;
since $\widetilde{A}$ avoids $P$, it follows that $\gamma > 0$.

We will now prove that $\widetilde{A}$ also avoids $t P$ whenever $t$ is sufficiently close to 1, say when $|t-1| < \gamma/(k \cdot \diam P)$.
Indeed, for all $x_1, \dots, x_k \in \widetilde{A}$ and all $T \in \SO(\R^d)$, by the triangle inequality we have that
\begin{align*}
    \sum_{j=2}^k \| (x_j - x_1) - T (t v_j - t v_1) \|
    &\geq \sum_{j=2}^k \big| \|(x_j - x_1) - T (v_j - v_1)\| - |t-1| \|v_j - v_1\| \big| \\
    &> \sum_{j=2}^k \| (x_j - x_1) - T (v_j - v_1) \| - k \cdot |t-1| \,\diam P \\
    &\geq \gamma - k \cdot |t-1| \,\diam P,
\end{align*}
which is positive if $|t-1| < \gamma/(k \cdot \diam P)$.
In particular, we see that
$$\m_{Q(0, R)}(P,\, t_2 P,\, \dots,\, t_n P) \geq d_{Q(0, R)}(\widetilde{A}) \geq \m_{Q(0, R)}(P) - 2\varepsilon$$
whenever $|t_j - 1| < \gamma/(k \cdot \diam P)$ for $2 \leq j \leq n$.
Since we clearly have that $\m_{Q(0, R)}(P,\, t_2 P,\, \dots,\, t_n P) \leq \m_{Q(0, R)}(P)$, the result follows.
\end{proof}

The proof of this last result actually implies a somewhat stronger and more technical property of the independence density, namely that every configuration $P$ where $\m_{\R^d}$ is \emph{discontinuous} must be a local minimum across the `discontinuity barrier';
more formally, we have that
$\inf_{P'\in \ball(P, \delta)} \m_{\R^d}(P') \rightarrow \m_{\R^d}(P)$
as $\delta \rightarrow 0$, where $\ball(P, \delta)$ is the ball of radius $\delta$ centered on $P$.
(The details of the proof are given below.)
If the configuration $P$ is admissible, then we can also prove the corresponding limit for $\sup_{P'\in \ball(P, \delta)} \m_{\R^d}(P')$ and conclude that $\m_{\R^d}$ is in fact continuous at this point.
This is done in the next theorem:

\begin{thm}[Continuity of the independence density] \label{thm:cont_of_m_space}
For every $n \geq 1$, the function
$(P_1, \dots, P_n) \mapsto \m_{\R^d}(P_1, \dots, P_n)$
is continuous on the set of $n$ admissible configurations in $\R^d$.
\end{thm}

\begin{proof}
For the sake of better readability, we will prove the result in the case of only one forbidden configuration;
the $n$-variable version easily follows from the same argument.
Fix some $\varepsilon > 0$, and let $R_1 \geq 1$ be large enough so that
$$\m_{\R^d}(P') \leq \m_{Q(0, R)}(P') \leq \m_{\R^d}(P') + \varepsilon$$
holds for all $P' \in \ball(P, 1)$ and all $R \geq R_1$
(this value exists by Lemma~\ref{lem:bounds_space}).

Let $R \geq R_1$ and let $A \subset Q(0, R)$ be a compact $P$-avoiding set with density
$$d_{Q(0, R)}(A) \geq \m_{Q(0, R)}(P) - \varepsilon.$$
Proceeding exactly as we did in the proof of the last lemma, we conclude that $A$ also avoids all $P'$ close enough to $P$;
for all such configurations we then have
$$\m_{Q(0, R)}(P') \geq d_{Q(0, R)}(A) \geq \m_{Q(0, R)}(P) - \varepsilon \geq \m_{\R^d}(P) - \varepsilon.$$
Since $\m_{Q(0, R)}(P') \leq \m_{\R^d}(P') + \varepsilon$ whenever $P' \in \ball(P, 1)$, this implies that $\m_{\R^d}(P') \geq \m_{\R^d}(P) - 2\varepsilon$ for all $P'$ close enough to $P$.

Now we suppose that $P$ is admissible, and let $R_0$, $c > 0$ be the constants promised by the Supersaturation Theorem
(Theorem \ref{thm:supersat_space}).
Let $R \geq \max\{R_0, R_1\}$.
By equicontinuity (Lemma~\ref{lem:equicont_space}), there is some $\delta > 0$ for which the inequality
\begin{equation} \label{eq:equicont}
    |I_{P'}(A) - I_P(A)| < c R^d \hspace{5mm} \text{for all } P' \in \ball(P, \delta)
\end{equation}
holds whenever $A \subset Q(0, R)$ is a measurable set;
fix such a value of $\delta$.

If $P' \in \ball(P, \delta)$ and $A \subset Q(0, R)$ is a measurable set avoiding $P'$, we conclude from inequality~\eqref{eq:equicont} that $I_P(A) < c R^d$.
By the Supersaturation Theorem this implies that $d_{Q(0, R)}(A) < \m_{Q(0, R)}(P) + \varepsilon$, and thus (by optimizing over $A$) we conclude that $\m_{Q(0, R)}(P') \leq \m_{Q(0, R)}(P) + \varepsilon$.
It follows that
$$\m_{\R^d}(P') \leq \m_{Q(0, R)}(P') \leq \m_{Q(0, R)}(P) + \varepsilon \leq \m_{\R^d}(P) + 2\varepsilon$$
whenever $P' \in \ball(P, \delta)$, finishing the proof.
\end{proof}

These last results can now be combined in a very simple way to give an (almost complete) answer to question (Q2), when restricted to admissible configurations.
Let us denote by $\mathcal{M}_n(P)$ the set of all possible independence densities one can obtain by forbidding $n$ distinct dilates of a configuration $P$, that is
$$\mathcal{M}_n(P) := \big\{\m_{\R^d}(t_1 P,\, t_2 P,\, \dots,\, t_n P):\, 0 < t_1 < t_2 < \dots < t_n < \infty\big\}.$$
Recall that (Q2) asked for an explicit description of this set $\mathcal{M}_n(P)$.

\begin{thm}[Forbidding multiple dilates] \label{thm:description_of_M}
If $P \subset \R^d$ is admissible, then
$$\big(\m_{\R^d}(P)^n,\, \m_{\R^d}(P)\big) \subseteq \mathcal{M}_n(P) \subseteq \big[\m_{\R^d}(P)^n,\, \m_{\R^d}(P)\big].$$
\end{thm}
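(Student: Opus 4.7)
The outer inclusion is essentially free. Since any set avoiding all of $t_1 P, \dots, t_n P$ in particular avoids $t_1 P$, we immediately get $\m_{\mathbb{R}^d}(t_1 P, \dots, t_n P) \leq \m_{\mathbb{R}^d}(t_1 P) = \m_{\mathbb{R}^d}(P)$ by dilation invariance of upper density. The lower bound $\m_{\mathbb{R}^d}(t_1 P, \dots, t_n P) \geq \prod_{i=1}^n \m_{\mathbb{R}^d}(t_i P) = \m_{\mathbb{R}^d}(P)^n$ is an immediate specialization of Lemma~\ref{lem:lower_bound_space}. So the work is concentrated in the inner inclusion.

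For the inner inclusion my plan is an intermediate value argument along a one-parameter family of dilate tuples. For each $t > 1$, set
$$\varphi(t) := \m_{\mathbb{R}^d}(P,\, tP,\, t^2 P,\, \dots,\, t^{n-1} P),$$
so that the parameters $1 < t < t^2 < \dots < t^{n-1}$ are strictly increasing and $\varphi(t) \in \mathcal{M}_n(P)$. The two endpoint behaviours of $\varphi$ come from the results already proved. As $t \to 1^+$, every consecutive ratio $t^{j+1}/t^j = t$ tends to $1$, so Lemma~\ref{lem:tend_to_one} yields $\varphi(t) \to \m_{\mathbb{R}^d}(P)$. As $t \to \infty$, every consecutive ratio tends to $\infty$, and since $t^j P$ is admissible whenever $P$ is, Theorem~\ref{thm:main_space} gives $\varphi(t) \to \m_{\mathbb{R}^d}(P)^n$.

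The remaining ingredient is the continuity of $\varphi$ on $(1, \infty)$. The map $t \mapsto (P, tP, \dots, t^{n-1} P)$ is manifestly continuous into the space of $n$-tuples of configurations, and for every $t > 1$ all $n$ coordinates are admissible (admissibility is preserved under nonzero dilation). Hence the image lies entirely inside the open set where $\m_{\mathbb{R}^d}$ is continuous by Theorem~\ref{thm:cont_of_m_space}, so $\varphi$ is continuous. The intermediate value theorem then produces, for each $\alpha \in (\m_{\mathbb{R}^d}(P)^n, \m_{\mathbb{R}^d}(P))$, some $t_\alpha > 1$ with $\varphi(t_\alpha) = \alpha$, witnessing $\alpha \in \mathcal{M}_n(P)$.

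The only subtle point worth flagging is the admissibility check along the whole path $t \in (1, \infty)$: at the left endpoint $t = 1$ the $n$ configurations would coalesce, but for any $t > 1$ they remain $n$ distinct admissible configurations and thus belong to the domain of the continuity theorem. The endpoint limits $t \to 1^+$ and $t \to \infty$ are invoked only to pin down the values of $\varphi$ needed at the ends of the IVT interval, not to establish continuity there, so no issue arises from the coalescence. With this sorted, the remaining steps are direct applications of Lemma~\ref{lem:lower_bound_space}, Lemma~\ref{lem:tend_to_one}, Theorem~\ref{thm:main_space}, and Theorem~\ref{thm:cont_of_m_space}.
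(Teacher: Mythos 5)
Your proof is correct and takes essentially the same route as the paper: both endpoints are pinned down by Lemma~\ref{lem:lower_bound_space} together with Lemma~\ref{lem:tend_to_one} and Theorem~\ref{thm:main_space}, and the inner inclusion then follows from continuity of $(t_1,\dots,t_n)\mapsto\m_{\mathbb{R}^d}(t_1P,\dots,t_nP)$ (Theorem~\ref{thm:cont_of_m_space}) via a connectedness/intermediate-value argument. The only cosmetic difference is that you restrict to the explicit one-parameter slice $(1,t,\dots,t^{n-1})$ and invoke the IVT directly, whereas the paper works with the full $n$-parameter map and appeals to connectedness of its domain; by dilation invariance these are equivalent.
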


\begin{proof}
It is clear that $\m_{\R^d}(t_1 P,\, t_2 P,\, \dots,\, t_n P) \leq \m_{\R^d}(t_1 P) = \m_{\R^d}(P)$ always holds, and we saw in Lemma~\ref{lem:lower_bound_space} that
$$\m_{\R^d}(t_1 P,\, t_2 P,\, \dots,\, t_n P) \geq \prod_{i=1}^n \m_{\R^d}(t_i P) = \m_{\R^d}(P)^n.$$
Moreover, Lemma~\ref{lem:tend_to_one} implies that $\m_{\R^d}(P)$ is an accumulation point of the set $\mathcal{M}_n(P)$, and (since $P$ is admissible) Theorem~\ref{thm:main_space} implies the same about $\m_{\R^d}(P)^n$.
The result follows from continuity of the function
$$(t_1,\, t_2,\, \dots,\, t_n) \mapsto \m_{\R^d}(t_1 P,\, t_2 P,\, \dots,\, t_n P),$$
which is an immediate consequence of Theorem~\ref{thm:cont_of_m_space}.
\end{proof}

As our final result in the Euclidean setting, we will show the existence of extremizer sets which avoid admissible configurations.
This generalizes a result of Bukh (see Corollary 13 in~\cite{Bukh}) from forbidden distances to higher-order configurations.

\begin{thm}[Existence of extremizers]
If $P \subset \R^{d}$ is admissible, then there exists a $P$-avoiding measurable set $A \subseteq \R^d$ with density $d(A) = \m_{\R^d}(P)$.
\end{thm}

\begin{proof}
For each integer $i \geq 1$, let $A_i \subseteq Q(0, i)$ be a $P$-avoiding set with density $d_{Q(0, i)}(A_i) \geq \m_{Q(0, i)}(P) - 2^{-i}$.
Denote the unit ball of $L^{\infty}(\R^d)$ by $\ball_{\infty}$;
by the Banach-Alaoglu Theorem, $\ball_{\infty}$ is weak$^*$ compact.
By restricting to a subsequence if necessary, we may then assume that $(A_i)_{i \geq 1}$ converges to some element $\widetilde{A} \in \ball_{\infty}$ in the weak$^*$ topology of $L^{\infty}(\R^d)$.
Denote by $A := \supp \widetilde{A}$ the support of $\widetilde{A}$.\footnote{Strictly speaking, $\widetilde{A}$ is an equivalence class of functions, not a specific function.
More formally, our set $A$ is the support of an (arbitrary) representative of this class, but since this choice of representative makes no difference to our argument one can ignore this technicality.}

We will first prove that $I_P(A) = 0$.
Fix some $R > 0$ and (for notational convenience) denote the indicator function of the cube $Q(0, R)$ by $\chi_R$.
Writing $\chi_R A_i$ for the pointwise product of $\chi_R$ and the indicator function of $A_i$, one easily sees from the definition that $(\chi_R A_i)_{i \geq 1}$ converges to $\chi_R \widetilde{A}$ in the weak$^*$ topology of $L^{\infty}(Q(0, R))$.
As $P$ is admissible, the counting function $I_P$ is weak$^*$ continuous (by Lemma~\ref{lem:weak*cont_space}) and thus $I_P(\chi_R \widetilde{A}) = \lim_{i \rightarrow \infty} I_P(\chi_R A_i) = 0$.
We now proceed as in the proof of Lemma~\ref{lem:weak_supersat} to show that $I_P(\supp \chi_R \widetilde{A}) = 0$ as well:
first approximate $I_P(\supp \chi_R \widetilde{A})$ by $I_P(B_{\varepsilon})$, where $B_{\varepsilon} := \{x \in Q(0, R):\, \widetilde{A}(x) \geq \varepsilon\}$ and $\varepsilon > 0$ is a sufficiently small constant (depending on $R$), and then note that $I_P(B_{\varepsilon}) \leq \varepsilon^{-|P|} I_P(\chi_R \widetilde{A}) = 0$ for all $\varepsilon > 0$.
Since $\supp \chi_R \widetilde{A} = A \cap Q(0, R)$ up to zero-measure sets and $R > 0$ is arbitrary, we conclude that $I_P(A) = 0$ as wished.

Next we prove that $d(A) = \m_{\R^d}(P)$.
Since $I_P(A) = 0$, it follows from Lemma~\ref{lem:zero_meas_space} that $\overline{d}(A) \leq \m_{\R^d}(P)$, and so it suffices to show that
\begin{equation} \label{eq:liminf}
    \liminf_{R \rightarrow \infty} d_{Q(0, R)}(A) \geq \m_{\R^d}(P).
\end{equation}
Fix some arbitrary $\varepsilon > 0$ and take $R_0 \geq 2$ large enough so that $(R_0 + 2\diam P)^d < (1 + \varepsilon/4) R_0^d$.
For any given $R \geq R_0$, take a $P$-avoiding set $B_R \subseteq Q(0, R)$ with
$$d_{Q(0, R)}(B_R) > \m_{Q(0, R)}(P) - \varepsilon/4 \geq \m_{\R^d}(P) - \varepsilon/4.$$
For all $i \geq R$, define $A_i' := B_R \cup (A_i \setminus Q(0, R + 2\diam P))$;
note that $A_i'$ avoids $P$ and
\begin{align*}
    \meas(A_i') &= \meas(A_i) - \meas\big(A_i \cap \big( Q(0, R + 2\diam P) \setminus Q(0, R) \big)\big) \\
    &\hspace{1cm} - \meas(A_i \cap Q(0, R)) + \meas(B_R) \\
    &\geq \meas(A_i) - \big((R + 2\diam P)^d - R^d\big) + \meas(B_R) - \meas(A \cap Q(0, R)) \\
    &\hspace{1cm} + \meas(A \cap Q(0, R)) - \meas(A_i \cap Q(0, R)) \\
    &\geq \big(\m_{Q(0, i)}(P) - 2^{-i}\big) i^d - \frac{\varepsilon R^d}{4} + \big(d_{Q(0, R)}(B_R) - d_{Q(0, R)}(A)\big) R^d \\
    &\hspace{1cm} + \int_{Q(0, R)} \big(A(x) - A_i(x)\big) \,dx \\
    &\geq \big(\m_{Q(0, i)}(P) - 2^{-i}\big) i^d - \frac{\varepsilon R^d}{2} + \big(\m_{\R^d}(P) - d_{Q(0, R)}(A)\big) R^d \\
    &\hspace{1cm} + \int_{Q(0, R)} \big(\widetilde{A}(x) - A_i(x)\big) \,dx.
\end{align*}
Since $\meas(A_i') \leq \m_{Q(0, i)}(P)\, i^d$ for all $i \geq R$ and $\int_{Q(0, R)} \big(\widetilde{A}(x) - A_i(x)\big) \,dx > -\varepsilon$ for all sufficiently large $i$, we conclude that for large enough $i$ we have
$$d_{Q(0, R)}(A) > \m_{\R^d}(P) - \frac{i^d}{2^i R^d} - \frac{\varepsilon}{2} - \frac{\varepsilon}{R^d} > \m_{\R^d}(P) - \varepsilon,$$
proving inequality \eqref{eq:liminf}.

Finally, since $I_P(A) = 0$, it follows from Lemma~\ref{lem:zero_meas_space} that we can remove a zero-measure subset of $A$ in order to remove all copies of $P$ without changing its density.
The theorem follows.
\end{proof}

\section{Configurations on the sphere} \label{Sphere}

In this section we turn to the question of whether the methods and results shown in the Euclidean space setting can also be made to work in the spherical setting.

We shall fix an integer $d \geq 2$ throughout this section and work on the $d$-dimensional unit sphere $\S^d := \big\{x\in \R^{d+1}:\, \|x\|=1\big\}$.
We denote the uniform probability measure on $\S^d$ by $\sigma^{(d)} = \sigma$, and the normalized Haar measure on $\SO(\R^{d+1})$ by $\mu_{d+1} = \mu$.
These two measures are related as follows:
if $X \subseteq \S^d$ is a measurable set and $x\in \S^d$, then
$$\sigma(X) = \mu \big(\big\{T\in \SO(\R^{d+1}):\, Tx \in X\big\}\big).$$

The analogue of the axis-parallel cube in the spherical setting will be the \emph{spherical cap}:
given $x \in \S^d$ and $\rho > 0$, we
denote\footnote{It is more customary to define the spherical cap using angular distance instead of Euclidean distance as we use.
There is no meaningful (qualitative) difference between these two choices, but the use of the Euclidean distance will be more convenient for us.}
$$\Scap(x, \rho) := \big\{y \in \S^d:\, \|x - y\|_{\R^{d+1}} \leq \rho\big\}.$$
We say $\Scap(x, \rho)$ is the \emph{spherical cap with center $x$ and radius $\rho$}.
Since its measure $\sigma(\Scap(x, \rho))$ does not depend on the center point $x$, we shall denote this value simply by $\sigma(\Scap_{\rho})$.
For a given (measurable) set $A \subseteq \S^d$ we then write
$$d_{\Scap(x, \rho)}(A) := \frac{\sigma(A \cap \Scap(x, \rho))}{\sigma(\Scap_{\rho})}$$
for the density of $A$ inside this cap.

We define a (spherical) configuration on $\S^d$ as a finite subset of $\R^{d+1}$ which is congruent to a set on $\S^d$;
it is convenient to allow for configurations that are not necessarily on the sphere in order to consider dilations.
Note that, if $P, Q \subset \S^d$ are two configurations which \emph{are} on the sphere, then $P \simeq Q$ if and only if there is a transformation $T \in \SO(\R^{d+1})$ for which $P = T \cdot Q$ (translations are no longer necessary in this case).

A spherical configuration $P$ on $\S^d$ is said to be \emph{admissible} if it has at most $d$ points and if it is congruent to a set $P' \subset \S^d$ which is
linearly independent.\footnote{Note that this definition is different from the one in the Euclidean setting, where we required the points to be affinely independent instead of linearly independent.
The reason behind this difference is that the Euclidean space is translation-invariant while the sphere is not, so affine properties on $\R^d$ should translate to linear properties on $\S^d$.}
As before, we shall say that some set $A \subseteq \S^d$ \emph{avoids} $P$ if there is no subset of $A$ which is congruent to $P$.

The natural analogues of the independence density in the spherical setting can now be given.
For $n \geq 1$ configurations $P_1, \dots, P_n$ on $\S^d$, we define the quantities
\begin{align*}
    \m_{\S^d}(P_1, \dots, P_n) &:= \sup \big\{\sigma(A):\, A \subset \S^d \text{ avoids } P_i,\, 1 \leq i \leq n\big\} \hspace{2mm} \text{and} \\
    \m_{\Scap(x, \rho)}(P_1, \dots, P_n) &:= \sup \big\{d_{\Scap(x, \rho)}(A):\, A \subset \Scap(x, \rho) \text{ avoids } P_i,\, 1 \leq i \leq n\big\}.
\end{align*}
Whenever convenient we will state and prove results in the case of only one forbidden configuration, as the more general case of multiple forbidden configurations follows from the same arguments with only trivial modifications (but heavier notation).

\medskip

The first issue we encounter in the spherical setting is that it is not compatible with dilations:
given a set of points $P \subset \S^d$ and some dilation parameter $t > 0$, it is usually \emph{not} true that there exists a set $Q \subset \S^d$ congruent to $t P$.
However, there is a large class of configurations (including the ones we call admissible) for which this \emph{is} true whenever $0 < t \leq 1$;
we shall say that they are \emph{contractible}.

It is easy to show that any configuration $P \subset \S^d$ which is contained in a $d$-dimensional affine hyperplane
(e.g. any configuration with at most $d+1$ points)
is contractible.
Indeed, let $0 < t\leq 1$ and suppose $P \subset \S^d \cap (w+U)$, where $U \subset \R^{d+1}$ is a $d$-dimensional subspace and $w$ is orthogonal to $U$.
Then $w$ is orthogonal to $v-w$ for every $v\in P$, and one readily checks
that\footnote{This is true if $w\neq 0$, by two applications of Pythagoras' Theorem.
If $w=0$, then one has instead that $(1-t^2)^{1/2} u + tP \subset \S^d$ for a unit vector $u\in \R^{d+1}$ orthogonal to the subspace $U$.}
$sw + tP \subset \S^d$ for $s = \big(t^2 + (1-t^2) \|w\|^{-2}\big)^{1/2} - t$.

Even when the configuration we are considering is contractible, however, there is no easy relationship between the independence densities of its distinct dilates.
We will then start with the following reassuring lemma, which in a sense assures us the results we will eventually obtain are not true for only trivial reasons.

\begin{lem} \label{lem:reassuring}
For any fixed contractible configuration $P \subset \S^d$, we have that
$$\inf_{0 < t \leq 1} \m_{\S^d}(tP) > 0 \hspace{3mm} \text{and} \hspace{3mm} \sup_{0 < t \leq 1} \m_{\S^d}(tP) < 1.$$
\end{lem}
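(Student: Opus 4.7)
For the upper bound $\sup_{0 < t \leq 1} \m_{S^d}(tP) < 1$, the plan is to introduce a spherical counting function and apply a first-moment (Bonferroni) argument. Since $P$ is contractible, for each $t \in (0,1]$ I can fix a representative $\{v_1, \dots, v_k\} \subseteq S^d$ congruent to $tP$, and define
$$I_{tP}(A) := \int_{\SO(d+1)} A(T v_1) \cdots A(T v_k) \, d\mu(T).$$
If $A$ avoids $tP$, the integrand vanishes identically, so $I_{tP}(A) = 0$. On the other hand, the elementary bound $\prod_i A(T v_i) \geq 1 - \sum_i (1 - A(T v_i))$ combined with $\int_{\SO(d+1)} A(T v_i)\, d\mu(T) = \sigma(A)$ (which follows from the invariance of $\mu$ and the transitivity of the $\SO(d+1)$-action on $S^d$) gives $I_{tP}(A) \geq 1 - k\,(1 - \sigma(A))$. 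These two facts force $\sigma(A) \leq 1 - 1/k$ whenever $A$ avoids $tP$, which yields the uniform bound $\sup_{0 < t \leq 1} \m_{S^d}(tP) \leq 1 - 1/|P|$.

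For the lower bound $\inf_{0 < t \leq 1} \m_{S^d}(tP) > 0$, the plan is to construct an explicit $tP$-avoiding set of density bounded below uniformly in $t$, as a disjoint union of small spherical caps. Let $d_{\min}$ and $d_{\max}$ denote the smallest and largest pairwise Euclidean distances between points of $P$ (both positive since $|P| \geq 2$). Choose constants $\alpha, \beta > 0$ with $2\alpha < d_{\min}$ and $\beta - 2\alpha > d_{\max}$ (e.g.\ $\alpha = d_{\min}/4$ and $\beta = d_{\max} + d_{\min}$), and set $r_t := t\alpha$ and $\delta_t := t\beta$. Let $X_t \subseteq S^d$ be any maximal $\delta_t$-separated set and define $A_t := \bigcup_{x \in X_t} \Scap(x, r_t)$. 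Two points of $A_t$ lying in a common cap are at distance $\leq 2r_t < t d_{\min}$ (too close to form any pair in $tP$), while two points lying in distinct caps are at distance $\geq \delta_t - 2r_t > t d_{\max}$ (too far); hence $A_t$ contains no congruent copy of $tP$.

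It remains to lower-bound $\sigma(A_t)$ uniformly on $t \in (0,1]$, which I will do with a short case analysis. When $\delta_t \leq 1$, the $\delta_t$-covering property of any maximal $\delta_t$-separated set gives $|X_t| \geq 1/\sigma(\Scap_{\delta_t})$, hence $\sigma(A_t) \geq \sigma(\Scap_{r_t})/\sigma(\Scap_{\delta_t})$; by the standard two-sided comparisons $c_d \rho^d \leq \sigma(\Scap_\rho) \leq C_d \rho^d$ valid for $\rho \in (0,1]$, this ratio is bounded below by a positive constant depending only on $\alpha/\beta$ and $d$. In the complementary range $t \geq t_0$ for a small fixed $t_0 > 0$, even a single cap $\Scap(x, r_t)$ has measure $\geq \sigma(\Scap_{t_0\alpha}) > 0$, so one point of $X_t$ already suffices. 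Splicing the two regimes yields the desired uniform positive lower bound. The main obstacle I anticipate is purely the bookkeeping of the constants $\alpha, \beta, t_0$ so that the two density estimates connect cleanly into one constant valid for every $t \in (0,1]$; conceptually, both inequalities follow essentially immediately from the spherical counting identity and the cap-union construction, respectively.
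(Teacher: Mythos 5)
Your proposal is correct and follows essentially the same approach as the paper. The upper bound argument is identical in substance: the paper integrates the pointwise inequality $\sum_i A(Rv_i) \le k-1$ over $\SO(d+1)$ to get $k\sigma(A) \le k-1$, and your Bonferroni version $I_{tP}(A) \ge 1 - k(1-\sigma(A))$ is an equivalent restatement yielding the same bound $1 - 1/|P|$. For the lower bound, both constructions are a disjoint union of small spherical caps whose radius scales linearly with $t$, disjointness chosen so that neither two points in one cap nor two points in distinct caps can realize a distance present in $tP$, and both appeal to a comparison between cap volumes at commensurate radii; the paper packs caps of radius $\diam tP$ and keeps the inner quarter, excluding a copy of $tP$ via the diameter-realizing pair, while you parameterize in terms of $d_{\min}$ and $d_{\max}$ of $P$ and a maximal separated set, excluding \emph{every} pair. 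That is only a mild bookkeeping difference, not a different route.
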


\begin{proof}
For the first inequality, we note that spherical caps are exactly the closed balls of the separable metric space $\S^d$ endowed with the Euclidean distance.
This allows us to use the Vitali Covering Lemma;
see for instance \cite[Theorem~2.1]{Mattila95}.
For any given $0 < t \leq 1$, start with the trivial cover $\S^d = \bigcup_{x\in \S^d} \Scap(x,\, \diam tP)$ and apply the Vitali Covering Lemma to obtain a (necessarily finite) set of center points $\{x_1, \dots, x_N\} \subset \S^d$ such that $\Scap(x_i, \diam tP) \cap \Scap(x_j, \diam tP) = \emptyset$ for $i \neq j$ and
$$\S^d = \bigcup_{i=1}^N \Scap(x_i,\, 5 \diam tP).$$
Since the caps $\Scap(x_i, \diam tP)$ are pairwise disjoint, it is easy to see that the set
$$A_t := \bigcup_{i=1}^N \Scap \bigg(x_i,\, \frac{\diam tP}{4}\bigg)$$
does not contain any copy of $tP$.
Finally, as the inequality $\sigma(\Scap_{\rho}) \geq c_d \sigma(\Scap_{20\rho})$ holds for some constant $c_d > 0$ and all $0 < \rho \leq 2$, denoting $\rho(t) := (\diam tP)/4$ we have that
\begin{align*}
    \sigma(A_t)
    = \sum_{i=1}^N \sigma(\Scap_{\rho(t)})
    \geq c_d \sum_{i=1}^N \sigma(\Scap_{20\rho(t)})
    \geq c_d \,\sigma \bigg(\bigcup_{i=1}^N \Scap\big(x_i,\, 20\rho(t)\big)\bigg) = c_d,
\end{align*}
and thus $\m_{\S^d}(tP) \geq \sigma(A_t) \geq c_d$ for all $0 < t \leq 1$.

For the second inequality, suppose $A \subseteq \S^d$ avoids $P = \{v_1, \dots, v_k\}$.
Then
$$\sum_{i=1}^k A(R v_i) = |A \cap R P| \leq k-1 \hspace{5mm} \text{for all } R \in \SO(\R^{d+1}).$$
Integrating over $\SO(\R^{d+1})$, we obtain
$$k \sigma(A) = \int_{\SO(\R^{d+1})} \bigg(\sum_{i=1}^k A(R v_i)\bigg) \,d\mu(R) \leq k-1,$$
implying that $\sigma(A) \leq 1 - 1/k$.
Thus $\sup_{0 < t \leq 1} \m_{\S^d}(tP) \leq 1 - 1/|P|$.
\end{proof}

Given some configuration $P = \{v_1, v_2, \dots, v_k\} \subset \S^d$, we define the \emph{counting function} $I_P$ which acts on a bounded measurable function $f: \S^d \rightarrow \R$ by
$$I_P(f) := \int_{\SO(\R^{d+1})} f(R v_1) f(R v_2) \cdots f(R v_{k}) \,d\mu(R).$$
In the case where $f$ is the indicator function of a set $A \subseteq \S^d$, we note that
$$I_P(A) = \mathbb{P}_{R \in \SO(\R^{d+1})} \big(R v_1,\, R v_2,\, \dots,\, R v_k \in A\big).$$
If the spherical configuration $P$ is \emph{not} a subset of the sphere, we define the function $I_P$ as being equal to $I_Q$ for any $Q \simeq P$ which is contained in $\S^d$.

As in the Euclidean setting, one can show there is no meaningful difference between requiring that a measurable set $A \subseteq \S^d$ avoids some configuration $P$ or that it only satisfies $I_P(A) = 0$.
This is proven in the next lemma:

\begin{lem}[Zero-measure removal] \label{lem:zero_meas_sphere}
Suppose $P \subset \S^d$ is a finite configuration and $A \subseteq \S^d$ is measurable.
If $I_P(A) = 0$, then we can remove a zero-measure subset of $A$ in order to remove all copies of $P$.
\end{lem}

\begin{proof}
It will be more convenient to change spaces and work on the orthogonal group $\SO(\R^{d+1})$ rather than on the sphere $\S^d$.
For $\delta > 0$ and $R \in \SO(\R^{d+1})$, denote by
$$\ball(R, \delta) := \big\{ T \in \SO(\R^{d+1}):\, \|T - R\| \leq \delta \big\}$$
the ball of radius $\delta$ in spectral norm centered on $R$, and let $I$ denote the identity transformation.
We will first show that
\begin{equation} \label{eq:Lebesgue_density_sphere}
    \lim_{\delta \rightarrow 0} \bigg| \frac{1}{\mu(\ball(I, \delta))} \int_{\ball(I, \delta)} A(Tx) \,d\mu(T) - A(x) \bigg| = 0 \text{ for almost every } x \in \S^d.
\end{equation}

Let $e\in \S^d$ be an arbitrary point and define on $\SO(\R^{d+1})$ the (measurable) set $E := \{R \in \SO(\R^{d+1}):\, Re \in A\}$.
By the Lebesgue Density Theorem on $\SO(\R^{d+1})$, we have that
$$\lim_{\delta \rightarrow 0} \bigg| \frac{1}{\mu(\ball(R, \delta))} \int_{\ball(R, \delta)} E(T) \,d\mu(T) - E(R) \bigg| = 0 \text{ for $\mu$-a.e. } R \in \SO(\R^{d+1}).$$
But this means exactly that the measure of the set
$$F := \bigg\{ R \in \SO(\R^{d+1}):\, \lim_{\delta \rightarrow 0} \bigg| \frac{1}{\mu(\ball(I, \delta))} \int_{\ball(I, \delta)} A(T Re) \,d\mu(T) - A(Re)\bigg| \neq 0 \bigg\}$$
of non-density points is zero.
It is clear from the definition of $F$ that it is invariant under the right-action of $\Stab^{\SO(\R^{d+1})}(e)$;
this implies $\sigma(\{Re:\, R \in F\}) = \mu(F) = 0$, proving (\ref{eq:Lebesgue_density_sphere}).

Now we remove from $A$ all points $x$ for which identity (\ref{eq:Lebesgue_density_sphere}) does not hold, thus obtaining a subset $B \subseteq A$ with $\sigma(A \setminus B) = 0$ and
$$\lim_{\delta \rightarrow 0} \frac{1}{\mu(\ball(I, \delta))} \int_{\ball(I, \delta)} B(Tx) \,d\mu(T) = 1 \hspace{5mm} \text{for all } x \in B.$$
We will show that no copy of $P$ remains on this restricted set $B$, which will finish the proof of the lemma.

Suppose for contradiction that $B$ contains a copy $\{u_1, \dots, u_k\}$ of $P$.
Then there exists $\delta > 0$ for which
$$\frac{1}{\mu(\ball(I, \delta))} \int_{\ball(I, \delta)} B(T u_i) \,d\mu(T) \geq 1 - \frac{1}{2k} \hspace{5mm} \text{for all } 1 \leq i \leq k,$$
which means that $\mathbb{P}_{T \in \ball(I, \delta)}(T u_i \notin B) \leq 1/2k$ for each $1 \leq i \leq k$.
Thus
\begin{align*}
    I_P(B) &= \mathbb{P}_{T \in \SO(\R^{d+1})} \big(T u_1,\, \dots,\, T u_k \in B\big) \\
    &\geq \mu(\ball(I, \delta)) \cdot \mathbb{P}_{T \in \ball(I, \delta)} \big(T u_1,\, \dots,\, T u_k \in B\big) \\
    &\geq \mu(\ball(I, \delta)) \bigg( 1 - \sum_{i=1}^k \mathbb{P}_{T \in \ball(I, \delta)}(T u_i \notin B) \bigg) \\
    &\geq \frac{\mu(\ball(I, \delta))}{2} > 0,
\end{align*}
contradicting our assumption that $I_P(A) = 0$.
\end{proof}

\subsection{Harmonic analysis on the sphere and the Counting Lemma} \label{HarmAnalSphere}

The next thing we need is an analogue of the Counting Lemma in the spherical setting, saying we do not significantly change the count of configurations in a given set $A \subseteq \S^d$ by blurring this set a little.
As in the Euclidean setting, we will use Fourier-analytic methods to prove such a result;
we now give a quick overview of the definitions and results on harmonic analysis we will need for our arguments.

Given an integer $n \geq 0$, we write $\mathscr{H}^{d+1}_n$ for the space of real harmonic polynomials, homogeneous of degree $n$, on $\R^{d+1}$.
That is,
$$\mathscr{H}^{d+1}_n = \bigg\{ f \in \R[x_1, \dots, x_{d+1}]:\, f \text{ homogeneous},\, \deg f = n,\, \sum_{i=1}^{d+1} \frac{\partial^2}{\partial x_i^2} f = 0 \bigg\}.$$
The restriction of the elements of $\mathscr{H}^{d+1}_n$ to $\S^d$ are called \emph{spherical harmonics of degree $n$} on $\S^d$.
If $Y \in \mathscr{H}^{d+1}_n$, note that $Y(x) = \|x\|^n Y(x')$ where $x = \|x\| x'$ and $x' \in \S^d$;
we can then identify $\mathscr{H}^{d+1}_n$ with the space of spherical harmonics of degree $n$, which by a slight (and common) abuse of notation we also denote $\mathscr{H}^{d+1}_n$.

Harmonic polynomials of different degrees are orthogonal with respect to the standard inner product $\langle f,\, g \rangle_{\S^d} := \int_{\S^d} f(x) g(x) \,d\sigma(x)$.
Moreover, it is a well-known fact (see e.g.~\cite[Chapter~1.1]{harmAnalSphere}) that the family of spherical harmonics is dense in $L^2(\S^d)$, and so
$$L^2(\S^d) = \bigoplus_{n=0}^{\infty} \mathscr{H}^{d+1}_n.$$
Denoting by $\proj_{n}: L^2(\S^d) \rightarrow \mathscr{H}^{d+1}_n$ the orthogonal projection onto $\mathscr{H}^{d+1}_n$, what this means is that $f = \sum_{n=0}^{\infty} \proj_n f$ for all $f \in L^2(\S^d)$ (with equality in the $L^2$ sense).
By orthogonality we obtain \emph{Parseval's identity}:
$$\|f\|_2^2 = \sum_{n=0}^{\infty} \|\proj_n f\|_2^2.$$

There is a family $(P_n^d)_{n \geq 0}$ of polynomials on $[-1, 1]$, usually called \emph{ultraspherical} or \emph{Gegenbauer} polynomials, which is associated to this decomposition.
We use the convention that $\deg P_n^d = n$ and $P_n^d(1) = 1$.
These polynomials can be defined via the \emph{addition formula}
\begin{equation} \label{eq:addition}
    P_n^d(x\cdot y) = \frac{1}{\dim \mathscr{H}_n^{d+1}} \sum_{i=1}^{\dim \mathscr{H}_n^{d+1}} Y_i(x) Y_i(y) \quad \text{for all $x, y \in \S^d$},
\end{equation}
where $\{Y_i:\, 1\leq i\leq \dim \mathscr{H}_n^{d+1}\}$ is an (arbitrary) orthonormal basis of $\mathscr{H}_n^{d+1}$.
We refer the reader to Chapter~1.2 of Dai and Xu's book~\cite{harmAnalSphere} for the proof that this formula is independent of the choice of basis, and that it indeed defines a polynomial on $[-1, 1]$.

The next theorem collects several properties of the Gegenbauer polynomials which will be useful for us:

\begin{thm} \label{gegenbauer}
For all integers $d \geq 2$ and $n \geq 0$ the following hold:
\begin{itemize}
    \item[$(i)$] $P^d_n(t) \in [-1, 1]$ for all $t \in [-1, 1]$.
    \item[$(ii)$] The projection operator $\proj_n: L^2(\S^d) \rightarrow \mathscr{H}^{d+1}_n$ is given by
    \begin{equation} \label{projectionidentity}
        \proj_n f(x) = \dim \mathscr{H}_n^{d+1} \int_{\S^d} P_n^d(x \cdot y) f(y) \,d\sigma(y).
    \end{equation}
    \item[$(iii)$] For each fixed $y, z \in \S^d$ we have
    \begin{equation} \label{Pndidentity}
    \int_{\S^d} P_n^d(x \cdot y) P_n^d(x \cdot z) \,d\sigma(x) = \frac{1}{\dim \mathscr{H}_n^{d+1}} P_n^d(y \cdot z).
    \end{equation}
    \item[$(iv)$] For any fixed $\gamma > 0$, $\max_{t \in [-1+\gamma,\, 1-\gamma]} P^d_n(t)$ tends to zero as $n \rightarrow \infty$.
\end{itemize}
\end{thm}

\begin{proof}
The first three items follow easily from the addition formula~\eqref{eq:addition}.
Indeed, fix some orthonormal basis $\{Y_i:\, 1\leq i\leq \dim \mathscr{H}_n^{d+1}\}$ of $\mathscr{H}_n^{d+1}$.
Then
\begin{align*}
    \big|P_n^d(x\cdot y)\big|
    &= \bigg|\int_{\SO(d+1)}  P_n^d(Rx\cdot Ry) \,d\mu(R)\bigg| \\
    &= \frac{1}{\dim \mathscr{H}_n^{d+1}} \bigg|\int_{\SO(d+1)} \sum_{i=1}^{\dim \mathscr{H}_n^{d+1}} Y_i(Rx) Y_i(Ry) \,d\mu(R)\bigg|,
\end{align*}
which by the triangle inequality followed by Cauchy-Schwarz is at most
\begin{align*}
    \frac{1}{\dim \mathscr{H}_n^{d+1}} \sum_{i=1}^{\dim \mathscr{H}_n^{d+1}} &\bigg(\int_{\SO(d+1)} Y_i(Rx)^2 \,d\mu(R)\bigg)^{1/2} \bigg(\int_{\SO(d+1)} Y_i(Ry)^2 \,d\mu(R)\bigg)^{1/2} \\
    &= \frac{1}{\dim \mathscr{H}_n^{d+1}} \sum_{i=1}^{\dim \mathscr{H}_n^{d+1}} \bigg(\int_{\S^d} Y_i(z)^2 \,d\sigma(z)\bigg)
    = 1,
\end{align*}
proving~$(i)$.
Item~$(ii)$ follows from the chain of equalities
\begin{align*}
    \proj_n f(x)
    &= \sum_{i=1}^{\dim \mathscr{H}_n^{d+1}} \bigg(\int_{\S^d} f(y) Y_i(y) \,d\sigma(y)\bigg) Y_i(x) \\
    &= \int_{\S^d} \bigg(\sum_{i=1}^{\dim \mathscr{H}_n^{d+1}} Y_i(x) Y_i(y)\bigg) f(y) \,d\sigma(y) \\
    &= \dim \mathscr{H}_n^{d+1} \int_{\S^d} P_n^d(x \cdot y) f(y) \,d\sigma(y).
\end{align*}
To prove item~$(iii)$, note that
\begin{align*}
    \int_{\S^d} P_n^d(x \cdot y) &P_n^d(x \cdot z) \,d\sigma(x) \\
    &= \frac{1}{(\dim \mathscr{H}_n^{d+1})^2} \int_{\S^d} \sum_{i, j = 1}^{\dim \mathscr{H}_n^{d+1}} Y_i(x) Y_i(y) Y_j(x) Y_j(z) \,d\sigma(x) \\
    &= \frac{1}{(\dim \mathscr{H}_n^{d+1})^2} \sum_{i, j = 1}^{\dim \mathscr{H}_n^{d+1}} \bigg(\int_{\S^d} Y_i(x) Y_j(x)  \,d\sigma(x)\bigg) Y_i(y) Y_j(z) \\
    &= \frac{1}{(\dim \mathscr{H}_n^{d+1})^2} \sum_{i=1}^{\dim \mathscr{H}_n^{d+1}} Y_i(y) Y_i(z),
\end{align*}
which equals the right-hand side of~\eqref{Pndidentity} by definition.

Finally, the last item immediately follows from the more precise asymptotic bound given in \cite[Theorem~8.21.6]{Szego75}.
\end{proof}

We will follow Dunkl~\cite{Dunkl} in defining both the convolution operation on the sphere and the spherical analogue of Fourier coefficients.
For this we will need to break a little the symmetry of the sphere and distinguish an (arbitrary) point $e$ on $\S^d$;
we think of this point as being the north pole.
Write $\mathcal{M}(\S^d; e)$ for the space of Borel regular \emph{zonal measures on $\S^d$ with pole at $e$}, that is, those measures which are invariant under the action of $\Stab^{\SO(\R^{d+1})}(e)$.
We will refer to the elements of $\mathcal{M}(\S^d; e)$ simply as \emph{zonal measures}.

Given a function $f \in L^2(\S^d)$ and a zonal measure $\nu \in \mathcal{M}(\S^d; e)$, we define their \emph{convolution} $f * \nu$ by
$$f * \nu (x) := \int_{\S^d} f(T_x y) \,d\nu(y) \hspace{5mm} \text{for all } x \in \S^d,$$
where $T_x \in \SO(\R^{d+1})$ is an arbitrary element satisfying $T_x e = x$.
It is easy to see that this operation is well-defined, independently of the choice of $T_x$:
if $S_x e = T_x e = x$, then $S_x^{-1} T_x \in \Stab(e)$ and so
$\nu(S_x^{-1} A) = \nu((S_x^{-1} T_x) T_x^{-1} A) = \nu(T_x^{-1} A)$.
The value $f * \nu(x)$ can be thought of as the average of $f$ according to a measure which acts with respect to $x$ as $\nu$ acts with respect to the north pole $e$.

For an integer $n \geq 0$ and a zonal measure $\nu \in \mathcal{M}(\S^d; e)$, we define its \emph{$n$-th Fourier coefficient} $\widehat{\nu}_n$ by
$$\widehat{\nu}_n = \int_{\S^d} P^d_n(e \cdot y) \,d\nu(y).$$
The main property we will need of Fourier coefficients is the following result, which is stated in Dunkl's paper~\cite{Dunkl} and can be proven using a straightforward modification of the methods exposed in Chapter $2$ of Dai and Xu's book~\cite{harmAnalSphere}:

\begin{thm} \label{thm:conv_sphere}
If $f \in L^2(\S^d)$ and $\nu \in \mathcal{M}(\S^d; e)$, then $f * \nu \in L^2(\S^d)$ and
$$\proj_n (f * \nu) = \widehat{\nu}_n \,\proj_n f \hspace{5mm} \text{for all } n \geq 0.$$
\end{thm}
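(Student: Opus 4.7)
The plan is to reduce the statement to the single-harmonic identity $Y_n * \nu = \widehat{\nu}_n\, Y_n$ for every $Y_n \in \mathscr{H}_n^{d+1}$, after which the general case will follow from the orthogonal decomposition $L^2(S^d) = \bigoplus_n \mathscr{H}_n^{d+1}$ together with $L^2$-boundedness of the convolution operator $T_\nu: f \mapsto f*\nu$.

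For the heart of the argument, I would fix $x \in S^d$ and a choice of $T_x \in \SO(d+1)$ with $T_x e = x$, unfold the pushforward as $(Y_n * \nu)(x) = \int_{S^d} Y_n(T_x w)\, d\nu(w)$, and then exploit the zonality of $\nu$ by replacing the integrand by its $\Stab^{\SO(d+1)}(e)$-average in $w$:
\[
(Y_n * \nu)(x) = \int_{S^d} \bar{Y}_n^x(w)\, d\nu(w), \qquad \bar{Y}_n^x(w) := \int_{\Stab^{\SO(d+1)}(e)} Y_n(T_x S w)\, dS.
\]
The function $\bar{Y}_n^x$ lies in $\mathscr{H}_n^{d+1}$ (as an average of rotates of $Y_n$) and is zonal with pole at $e$ (from the averaging over $\Stab^{\SO(d+1)}(e)$). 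Invoking the standard fact that the subspace of $\mathscr{H}_n^{d+1}$ consisting of zonal functions with pole at $e$ is one-dimensional and spanned by $w \mapsto P_n^d(e \cdot w)$, together with the evaluation $\bar{Y}_n^x(e) = Y_n(T_x e) = Y_n(x)$ (valid since $Se = e$ for $S \in \Stab^{\SO(d+1)}(e)$ and $P_n^d(1) = 1$), I conclude $\bar{Y}_n^x(w) = Y_n(x)\, P_n^d(e \cdot w)$. Substituting yields
\[
(Y_n * \nu)(x) = Y_n(x) \int_{S^d} P_n^d(e \cdot w)\, d\nu(w) = \widehat{\nu}_n\, Y_n(x),
\]
which is the single-harmonic identity.

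To extend to general $f \in L^2(S^d)$, I would use the crude bound $|\widehat{\nu}_n| \leq |\nu|(S^d)$ coming from $|P_n^d(t)| \leq 1$ on $[-1,1]$ (Lemma~\ref{gegenbauer}). For any $f$ that is a finite sum of spherical harmonics, the identity just proved combined with Parseval gives $\|f * \nu\|_2^2 = \sum_n |\widehat{\nu}_n|^2 \|\proj_n f\|_2^2 \leq |\nu|(S^d)^2 \|f\|_2^2$, so $T_\nu$ extends continuously to a bounded operator on $L^2(S^d)$. Applying this operator term-by-term to the $L^2$-convergent decomposition $f = \sum_n \proj_n f$ then lets one read off $\proj_n(f * \nu) = \widehat{\nu}_n\, \proj_n f$, as desired.

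The main obstacle is the one-dimensionality of the zonal subspace of $\mathscr{H}_n^{d+1}$, used in the middle paragraph. This is standard Gelfand-pair material for the pair $(\SO(d+1),\, \Stab^{\SO(d+1)}(e))$, or can alternatively be extracted directly from the addition formula for Gegenbauer polynomials as developed in Chapter~2 of Dai and Xu~\cite{harmAnalSphere}. Everything else in the argument---the Fubini swap exploiting zonality, the operator-norm bound, and the passage from polynomials to $L^2$---is routine bookkeeping once this structural fact is in hand.
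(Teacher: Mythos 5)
The paper does not actually prove this theorem; it is stated as a black box, with a pointer to Dunkl~\cite{Dunkl} and to Chapter~2 of Dai and Xu~\cite{harmAnalSphere} for the proof technique. So there is no in-paper argument to compare against. Your proposal supplies a self-contained derivation, and it is correct and follows the route that those references implicitly point to: reduce to a single spherical harmonic $Y_n$, unfold the convolution as $\int_{S^d} Y_n(T_x w)\,d\nu(w)$, average the integrand over $\Stab^{\SO(d+1)}(e)$ using the zonality of $\nu$, recognize the averaged function as a $\Stab^{\SO(d+1)}(e)$-invariant element of $\mathscr{H}_n^{d+1}$, and invoke one-dimensionality of that invariant subspace (a consequence of $(\SO(d+1),\SO(d))$ being a Gelfand pair, or of the addition formula) together with the normalizations $P_n^d(1)=1$ and $Se=e$ for $S\in\Stab^{\SO(d+1)}(e)$ to pin down the constant. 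All of these steps check out.

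The one place that deserves a little more care is the passage from finite sums of spherical harmonics to general $f\in L^2(S^d)$. You obtain a bounded operator by extending from a dense subspace, and then read off the identity on Fourier components; but the theorem is about the pointwise-defined convolution $f*\nu$, so you also need to know that this a priori defined map is $L^2$-continuous (so that the extension agrees with it). This follows from a zonal-averaged Minkowski estimate: writing $f*\nu(x)=\int_{S^d}\int_{\Stab^{\SO(d+1)}(e)} f(T_xSw)\,dS\,d\nu(w)$ and using that $T\mapsto Tw$ pushes Haar measure on $\SO(d+1)$ to $\sigma$, one gets $\|f*\nu\|_2\le |\nu|(S^d)\,\|f\|_2$ directly for all $f\in L^2(S^d)$. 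With that in hand, your density argument identifies the two operators and the conclusion follows. This is indeed the routine bookkeeping you allude to, but it is worth spelling out, since the naive Minkowski step requires the zonal averaging to make $x\mapsto$ (rotated argument) measure-preserving.
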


With this we finish our review of harmonic analysis on the sphere, so let us return to our specific problem.
For a given $\delta > 0$, denote by $\scap_{\delta}$ the uniform probability measure on the spherical cap $\Scap(e, \delta)$:
$$\scap_{\delta}(A) = \frac{\sigma(A \cap \Scap(e, \delta))}{\sigma(\Scap(e, \delta))} \hspace{5mm} \text{for all measurable } A \subseteq \S^d.$$
Note that each $\scap_{\delta}$ is a zonal measure.
One immediately checks that
\begin{align*}
    (\widehat{\scap}_{\delta})_n &= \frac{1}{\sigma(\Scap_{\delta})} \int_{\Scap(e, \delta)} P^d_n(e \cdot y) \,d\sigma(y)
\end{align*}
for all $n \geq 0$, and
\begin{align*}
    f * \scap_{\delta}(x) &= \frac{1}{\sigma(\Scap_{\delta})} \int_{\Scap(x, \delta)} f(y) \,d\sigma(y)
\end{align*}
for all $f \in L^2(\S^d)$.
In particular, if $A \subseteq \S^d$ is a measurable set, then $A * \scap_{\delta}(x) = d_{\Scap(x, \delta)}(A)$;
this gives the `blurring' of the spherical sets we shall consider.

\begin{lem} \label{2pointsphere}
For every $d \geq 2$ and $\gamma > 0$, there exists a function $c_{d, \gamma}: (0, 1] \rightarrow \R$ with $\lim_{\delta \rightarrow 0^+} c_{d, \gamma}(\delta) = 0$ such that the following holds:
for all $f, g \in L^2(\S^d)$ and all points $u, v \in \S^d$ with $|u \cdot v| \leq 1 - \gamma$, we have that
$$\bigg| \int_{\SO(\R^{d+1})} f(Ru) \big(g(Rv) - g * \scap_{\delta}(Rv)\big) \,d\mu(R) \bigg| \leq c_{d, \gamma}(\delta)\, \|f\|_2 \|g\|_2.$$
\end{lem}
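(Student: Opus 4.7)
The plan is to diagonalize both integrals via the spherical harmonic expansion and then exploit two complementary sources of smallness: the decay $(\widehat{\scap}_\delta)_n \to 1$ as $\delta \to 0^+$ on a fixed low-frequency window, and the uniform decay $\sup_{t \in [-1+\gamma,\, 1-\gamma]} |P_n^d(t)| \to 0$ as $n \to \infty$ on the complementary high-frequency window. The hypothesis $u \cdot v \in [-1+\gamma,\, 1-\gamma]$ is precisely what enables the second kind of smallness; at $t = 1$ one has $P_n^d(1) \equiv 1$ and the argument collapses.

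First I would establish the spectral identity
$$\int_{\SO(d+1)} f(Ru)\, g(Rv) \,d\mu(R) \;=\; \sum_{n=0}^\infty P_n^d(u \cdot v)\, \langle \proj_n f,\, \proj_n g\rangle_{S^d}.$$
Setting $t := u \cdot v$, let $\nu_t \in \mathcal{M}(S^d; e)$ denote the uniform probability measure on the parallel $\{y \in S^d : e \cdot y = t\}$; this is zonal with Fourier coefficients $(\widehat{\nu}_t)_n = P_n^d(t)$. Disintegrating the Haar measure on $\SO(d+1)$ by the distribution of $Ru$ and noting that, conditional on $Ru = x$, the point $Rv$ is uniformly distributed on the parallel $\{z \in S^d : x \cdot z = t\}$, one identifies $\int_{\SO(d+1)} f(Ru) g(Rv)\,d\mu(R) = \langle f,\, g * \nu_t\rangle_{S^d}$. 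The spectral identity then follows from Parseval together with the convolution relation $\proj_n(g * \nu_t) = P_n^d(t)\, \proj_n g$ given by the theorem of Dunkl quoted above. Running the same argument with $g$ replaced by $g * \scap_\delta$ and using $\proj_n(g * \scap_\delta) = (\widehat{\scap}_\delta)_n\, \proj_n g$, the quantity to estimate becomes
$$\bigg|\sum_{n=0}^\infty P_n^d(u \cdot v)\, \bigl(1 - (\widehat{\scap}_\delta)_n\bigr)\, \langle \proj_n f,\, \proj_n g\rangle \bigg|.$$

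The second step is a frequency split at some threshold $N \geq 1$. On the low range $n \leq N$, bound $|P_n^d(u \cdot v)| \leq 1$ by the first bullet of Lemma~\ref{gegenbauer}, and observe that $(\widehat{\scap}_\delta)_n \to 1$ as $\delta \to 0^+$ for each fixed $n$, since $e \cdot y \geq 1 - \delta^2/2$ uniformly on $\Scap(e, \delta)$ forces $P_n^d(e \cdot y) \to 1$ uniformly. On the high range $n > N$, use the crude bound $|1 - (\widehat{\scap}_\delta)_n| \leq 2$ together with the second bullet of Lemma~\ref{gegenbauer}, which yields $\sup_{n > N}\sup_{t \in [-1+\gamma,\, 1-\gamma]} |P_n^d(t)| \to 0$ as $N \to \infty$. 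Two applications of Cauchy--Schwarz (once on each inner product $\langle \proj_n f, \proj_n g\rangle$, and once on the sum in $n$) bound each partial sum by $\|f\|_2 \|g\|_2$, giving the uniform bound
$$c_{d,\gamma}(\delta) \;:=\; \inf_{N \geq 1} \Bigl(\max_{1 \leq n \leq N} |1 - (\widehat{\scap}_\delta)_n| \,+\, 2 \sup_{n > N}\, \sup_{t \in [-1+\gamma,\, 1-\gamma]} |P_n^d(t)|\Bigr),$$
which tends to $0$ as $\delta \to 0^+$ by first choosing $N$ large to dominate the high-frequency tail and then $\delta$ small to handle the remaining finitely many low-frequency Fourier coefficients.

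The main obstacle is the derivation of the spectral identity in the first step: one has to carefully justify the disintegration of the Haar measure on $\SO(d+1)$ and match the resulting geometric picture (conditioning on $Ru = x$ produces a uniform distribution on a parallel) with the zonal-convolution language of Dunkl's theorem. Once this identity is in place, the frequency split is essentially mechanical, and the bound is actually \emph{independent} of the specific positions of $u$ and $v$ beyond the constraint on their inner product — a feature that will presumably be crucial when it is eventually chained together via the telescoping sum trick to prove a full spherical Counting Lemma.
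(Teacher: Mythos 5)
Your proposal is correct and follows essentially the same route as the paper: both reduce the integral over $\SO(d+1)$ to a convolution with the uniform zonal measure on a parallel (you via a disintegration of Haar measure, the paper via factoring through $\Stab(e)$ — the same computation), expand in spherical harmonics to get the multiplier $P_n^d(u\cdot v)\big(1-(\widehat{\scap}_\delta)_n\big)$, and split frequencies at a threshold $N$, using uniform decay of $P_n^d$ on $[-1+\gamma,\,1-\gamma]$ for large $n$ and convergence $(\widehat{\scap}_\delta)_n\to 1$ for the finitely many remaining modes. The explicit formula you give for $c_{d,\gamma}(\delta)$ as an infimum over $N$ is a tidy repackaging of the paper's $\varepsilon$-$N(\varepsilon,\gamma)$-$\delta_0$ argument.
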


\begin{proof}
Denote by $\nu_e$ the Haar measure on $\Stab(e)$, and assume without loss of generality that $u$ coincides with the north pole $e$.
By symmetry, the expression we wish to bound may then be written as
\begin{align*}
    \bigg| \int_{\SO(\R^{d+1})} f(Re) h(Rv) \,d\mu(R) \bigg|
    &= \bigg| \int_{\SO(\R^{d+1})} f(Re) \bigg( \int_{\Stab(e)} h(RSv) \,d\nu_e(S) \bigg) \,d\mu(R) \bigg|,
\end{align*}
where $h = g - g * \scap_{\delta}$.

Write $t_0 := e \cdot v$.
Note that, when $S \in \Stab(e)$ is distributed uniformly according to $\nu_e$, the point $Sv$ is uniformly distributed on $\S^{d-1}_{t_0} := \{ y \in \S^d: e \cdot y = t_0 \}$.
Denote by $\sigma_{t_0}^{(d-1)}$ the uniform probability measure on $\S^{d-1}_{t_0}$ (that is, the unique one which is invariant under the action of $\Stab(e)$).
Making the change of variables $y = Sv$, we see that
\begin{equation} \label{convsigma}
    \int_{\Stab(e)} h(RSv) \,d\nu_e(S)
    = \int_{\S^{d-1}_{t_0}} h(Ry) \,d\sigma_{t_0}^{(d-1)}(y)
    = h * \sigma_{t_0}^{(d-1)}(Re).
\end{equation}
The expression we wish to bound is then equal to
$$\bigg| \int_{\SO(\R^{d+1})} f(Re) \,h * \sigma_{t_0}^{(d-1)}(Re) \,d\mu(R) \bigg| = \bigg| \int_{\S^d} f(x) \,h * \sigma_{t_0}^{(d-1)}(x) \,d\sigma(x) \bigg|.$$

Using Parseval's Identity, we can rewrite the right-hand side of the last equality as
\begin{align*}
    \bigg| \sum_{n=0}^{\infty} \int_{\S^d} \proj_n f(x)
    &\,\proj_n (h * \sigma_{t_0}^{(d-1)}) (x) \,d\sigma(x) \bigg| \\
    &\leq \sum_{n=0}^{\infty} \int_{\S^d} |\proj_n f (x)| \, |(\widehat{\sigma}_{t_0}^{(d-1)})_n| \, |\proj_n h (x)| \,d\sigma(x) \\
    &\leq \sum_{n=0}^{\infty} |(\widehat{\sigma}_{t_0}^{(d-1)})_n| \, \|\proj_n f\|_2 \, \|\proj_n h\|_2,
\end{align*}
where we used Theorem~\ref{thm:conv_sphere} and then Cauchy-Schwarz.
As $h = g - g * \scap_{\delta}$, the expression above is equal to
\begin{align*}
    &\sum_{n=0}^{\infty} |(\widehat{\sigma}_{t_0}^{(d-1)})_n| \, |1 - (\widehat{\scap}_{\delta})_n| \, \|\proj_n f\|_2 \, \|\proj_n g\|_2 \\
    &\hspace{5mm} = \sum_{n=0}^{\infty} |P^d_n(t_0)| \bigg| 1 - \frac{1}{\sigma(\Scap_{\delta})} \int_{\Scap(e, \delta)} P^d_n(e \cdot y) \,d\sigma(y) \bigg| \|\proj_n f\|_2 \, \|\proj_n g\|_2.
\end{align*}

Fix some $\varepsilon > 0$.
Since $t_0 \in [-1 + \gamma,\, 1 - \gamma]$ (by hypothesis), from Theorem~\ref{gegenbauer} we obtain that $|P^d_n(t_0)| \leq \varepsilon/2$ holds for all $n \geq N(\varepsilon, \gamma)$, while
$$\bigg| 1 - \frac{1}{\sigma(\Scap_{\delta})} \int_{\Scap(e, \delta)} P^d_n(e \cdot y) \,d\sigma(y) \bigg| \leq \max_{-1 \leq t \leq 1} \big|1 - P^d_n(t)\big| = 2$$
always holds.
Moreover, since each $P^d_n$ is a polynomial satisfying $P^d_n(1) = 1$, we can choose $\delta_0 = \delta_0(\varepsilon, \gamma) > 0$ small enough so that $|1 - P^d_n(e \cdot y)| \leq \varepsilon$ holds whenever $n < N(\varepsilon, \gamma)$ and $y \in \Scap(e, \delta_0)$.
This implies that the last sum is at most
$$\sum_{n=0}^{\infty} \varepsilon \|\proj_n f\|_2 \, \|\proj_n g\|_2 \leq \varepsilon \|f\|_2 \|g\|_2$$
whenever $\delta \leq \delta_0(\varepsilon, \gamma)$, finishing the proof.
\end{proof}

Recall that a spherical configuration $P$ is admissible if it has at most $d$ points and if it is congruent to a set $P' \subset \S^d$ which is linearly independent.
We can now give the spherical counterpart to the Counting Lemma from the last section:

\begin{lem}[Counting Lemma] \label{countingsphere}
For every admissible configuration $P$ on $\S^d$ there exists a function $\eta_P: (0, 1] \rightarrow (0, 1]$ with $\lim_{\delta \rightarrow 0^+} \eta_P(\delta) = 0$ such that the following holds for all measurable sets $A \subseteq \S^d$:
$$|I_P(A) - I_P(A * \scap_{\delta})| \leq \eta_P(\delta) \hspace{5mm} \text{for all } \delta \in (0, 1].$$
Moreover, this upper-bound function $\eta_P$ can be made to hold uniformly over all configurations $P'$ inside a neighborhood of $P$.
\end{lem}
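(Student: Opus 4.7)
The plan is to adapt the Euclidean Counting Lemma proof (Lemma~\ref{lem:counting_space}) to the sphere, using Lemma~\ref{2pointsphere} in place of the Euclidean two-point Fourier estimate. First I would assume WLOG that $P = \{v_1,\dots,v_k\} \subset S^d$ with $v_1,\dots,v_k$ linearly independent (admissibility allows this), and apply the telescoping sum trick already introduced in the Euclidean setting to reduce the problem to bounds of the form
\[
\left|\int_{\SO(d+1)} h_1(Rv_1)\cdots h_{k-1}(Rv_{k-1})(g - g*\scap_\delta)(Rv_k)\, d\mu(R)\right| \le c(\delta),
\]
uniformly over $\|h_i\|_\infty, \|g\|_\infty \le 1$ and over every ordering of the points of $P$, for some $c(\delta) \to 0$. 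I would reorder so that the inner product $v_1 \cdot v_k$ lies in $[-1+\gamma,\,1-\gamma]$ for some $\gamma > 0$ --- which is possible because admissibility (linear independence of the $v_i$) rules out $v_1 = \pm v_k$.

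Next I would disintegrate the Haar measure on $\SO(d+1)$ with respect to the map $R \mapsto (Rv_2,\dots,Rv_{k-1})$: the outer integration runs over the orbit of $(v_2,\dots,v_{k-1})$ in $(S^d)^{k-2}$, and for each fixed outer value $(y_2,\dots,y_{k-1})$ the inner integration takes place over the stabilizer $\Stab(v_2,\dots,v_{k-1}) \cong \SO(d-k+3)$. Admissibility ($k \le d$) ensures this stabilizer has positive dimension and acts transitively on a subsphere of dimension $d-k+2 \ge 2$ inside the orthogonal complement $V^\perp$ of $V := \mathrm{span}(v_2,\dots,v_{k-1})$. For each fixed outer value, the inner integral has exactly the shape of the left-hand side of Lemma~\ref{2pointsphere}, with the two moving points $Rv_1, Rv_k$ of fixed inner product $v_1 \cdot v_k$ and the integration running over the smaller rotation group. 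Applying (an adaptation of) Lemma~\ref{2pointsphere} to that inner integral gives a bound $c_{d,k,\gamma}(\delta) \to 0$; integrating over the outer variables (each $|h_i(y_i)| \le 1$) and summing the $k$ telescoping contributions yields the required $\eta_P(\delta)$, and uniformity over a small ball around $P$ follows from the continuous dependence of $\gamma$ and of all constants on the configuration.

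The main technical obstacle lies in the last step: the convolution $g * \scap_\delta$ averages over a cap of the full sphere $S^d$, whereas the Fourier argument of Lemma~\ref{2pointsphere} naturally takes place on the latitude subsphere of $V^\perp \cap S^d$ on which the stabilizer acts. To handle this I would decompose $v_1$ and $v_k$ into their components parallel and perpendicular to $V$, fix the (stabilizer-invariant) parallel parts, and expand the restriction of $g - g * \scap_\delta$ to the relevant latitude subsphere in the appropriate spherical harmonics of dimension $d-k+2$. The Gegenbauer decay of Lemma~\ref{gegenbauer} applied in dimension $d-k+2$, combined with the pointwise convergence $(\widehat{\scap}_\delta)_n \to 1$ at each fixed degree $n$ (which follows from $P_n^d(1) = 1$ and the uniform bound $|1 - (\widehat{\scap}_\delta)_n| \le 2$), then allows the bound to go through essentially as in the proof of Lemma~\ref{2pointsphere}.
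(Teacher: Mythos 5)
Your initial steps (telescoping trick, disintegrating over the stabilizer $\Stab(v_2,\dots,v_{k-1}) \cong \SO(d-k+3)$ acting on a $(d-k+2)$-dimensional subsphere) match the paper's setup, and you have correctly identified the core obstacle: the convolution $g * \scap_\delta$ lives on $S^d$, but the Fourier argument wants to take place on the $(d-k+2)$-dimensional latitude subsphere. However, the fix you propose for this obstacle would not go through.

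The problem is that there is no usable relationship between the $S^\ell$-spherical-harmonic decomposition (where $\ell := d-k+2$) of the \emph{restriction} of $g - g*\scap_\delta$ to the subsphere, and the quantities $(\widehat{\scap}_\delta)_n$, which are Fourier coefficients of a zonal measure on $S^d$. If you write $h := g - g*\scap_\delta$ and restrict $h$ to $S^\ell$, the coefficients $\|\proj_n^{(\ell)}(h|_{S^\ell})\|_2$ are \emph{not} of the form $|1 - (\widehat{\scap}_\delta)_n| \cdot \|\proj_n^{(\ell)}(g|_{S^\ell})\|_2$; the convolution on $S^d$ does not commute with restriction to the subsphere, and in fact $g * \scap_\delta$ restricted to $S^\ell$ is not the convolution of $g|_{S^\ell}$ with any $S^\ell$-cap measure. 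So the two ingredients you want to combine — Gegenbauer decay in dimension $\ell$ and $(\widehat{\scap}_\delta)_n \to 1$ — live in two incompatible harmonic decompositions, and you have no estimate for the quantity that actually shows up, namely $\|\proj_n^{(\ell)} h_T\|_2$.

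The paper resolves this with an idea you do not have: instead of trying to control the $S^\ell$-Fourier coefficients of $h_T$ directly, it applies Parseval on $S^\ell$ and the Gegenbauer decay of $P_n^\ell(e \cdot u)$ to reduce matters to controlling $\int_{\SO(d+1)} \|\proj_n^{(\ell)} h_T\|_2^2 \,d\mu(T)$ for each of \emph{finitely many} degrees $n \le N$, then uses the reproducing-kernel identity \eqref{Pndidentity} to rewrite each such quantity as a double integral $\int_{S^\ell}\int_{S^\ell} \big(\int_{\SO(d+1)} h_T(y) h_T(z)\,d\mu(T)\big) P_n^\ell(y\cdot z)\,d\sigma\,d\sigma$. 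The key observation is that the inner integral $\int_{\SO(d+1)} h_T(y) h_T(z)\,d\mu(T)$, pulled back through the isometry $\psi$ to points $\tilde y, \tilde z \in S^d$ at fixed inner product, is exactly a \emph{two-point} average of $(f_k - f_k*\scap_\delta)(T\tilde y)(f_k - f_k*\scap_\delta)(T\tilde z)$ on the \emph{full} sphere $S^d$ — so Lemma~\ref{2pointsphere} applies directly, with no need to transplant the convolution to the subsphere. The remaining part of the double integral (where $y \cdot z$ is near $\pm 1$ and Lemma~\ref{2pointsphere} does not give a uniform bound) is handled by a measure-smallness estimate. You would need to add this reproducing-kernel step and the final split to make your outline into a proof.
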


\begin{proof}
Up to congruence, we may assume $P \subset \S^d$.
Similarly to what we did in the Euclidean setting, we will first obtain a uniform upper bound for
$$\bigg| \int_{\SO(\R^{d+1})} f_1(Tv_1) \cdots f_{k-1}(Tv_{k-1}) \big( f_k(Tv_k) - f_k * \scap_{\delta} (Tv_k) \big) \,d\mu(T) \bigg|,$$
valid whenever $0 \leq f_1, \dots, f_k \leq 1$ are measurable functions and $(v_1, v_2, \dots, v_k)$ is a permutation of the points of $P$.

Denote by $G := \Stab^{\SO(\R^{d+1})}(v_1, \dots, v_{k-2})$ the stabilizer of the first $k-2$ points of $P$, and by $H := \Stab^{\SO(\R^{d+1})}(v_1, \dots, v_{k-2}, v_{k-1}) = \Stab^{G}(v_{k-1})$ the stabilizer of the first $k-1$ points of $P$.
We can then bound the expression above by
\begin{equation} \label{whattobound}
    \int_{\SO(\R^{d+1})} \bigg| \int_G f_{k-1}(TSv_{k-1}) \big( f_k(TSv_k) - f_k * \scap_{\delta} (TSv_k) \big) \,d\mu_G(S) \bigg| \,d\mu(T),
\end{equation}
where $\mu_G$ denotes the normalized Haar measure on $G$.

Denote $\ell := d-k+2 \geq 2$.
Since $P$ is non-degenerate, we see that $G \simeq \SO(\R^{\ell+1})$ and that both $Gv_{k-1}$ and $Gv_k$ are spheres of dimension $\ell$.
Morally, we should then be able to apply the last lemma (with $d = \ell$, $f = f_{k-1}(T\cdot)$ and $g = f_k(T\cdot)$) and easily conclude.
However, the convolution in expression (\ref{whattobound}) above happens in $\S^d$, while that on the last lemma would happen in $\S^{\ell}$;
in particular, if $k \geq 3$ so that $\ell < d$, all of the mass on the average defined by the convolution in (\ref{whattobound}) lies \emph{outside} of the $\ell$-dimensional sphere $Gv_k$, so this argument cannot work.
We will have to work harder to conclude.

Note that, since $Gv_k$ is an $\ell$-dimensional sphere while $Hv_k$ is an $(\ell-1)$-dimensional sphere (which happens because $P$ is non-degenerate), it follows that there is a point $\xi \in Gv_k$ which is fixed by $H$;
this point will work as the north pole of $Gv_k$.

It will be more convenient to work on the canonical unit sphere $\S^{\ell}$ instead of the $\ell$-dimensional sphere $Gv_k \subset \S^d$.
We shall then restrict ourselves to the $(\ell+1)$-dimensional affine hyperplane $\mathcal{H}$ determined by $\mathcal{H} \cap \S^d = Gv_k$, and place coordinates on it to identify $\mathcal{H}$ with $\R^{\ell+1}$ and $Gv_k$ with $\S^{\ell}$, noting that $G$ then acts as $\SO(\R^{\ell+1})$.
More formally, let $r > 0$ be the radius of $Gv_k$ in $\R^{d+1}$, so that $Gv_k$ is isometric to $r \S^{\ell}$;
take such an isometry $\psi: Gv_k \rightarrow r \S^{\ell}$, and define $e \in \S^{\ell}$ by $e := \psi(\xi)/r$.
Now we construct a map $\phi: G \rightarrow \SO(\R^{\ell+1})$ defined by
$$\phi(S) \psi(x) = \psi(Sx) \hspace{5mm} \text{for all } x \in Gv_k$$
for each $S \in G$.
It is easy to check that this map is well-defined and gives an isomorphism between $G$ and $\SO(\R^{\ell+1})$ satisfying $\phi(H) = \Stab^{\SO(\R^{\ell+1})}(e)$.

For each fixed $T \in \SO(\R^{d+1})$, define the functions $g_T, h_T: \S^{\ell} \rightarrow [-1, 1]$ by
\begin{align*}
    &g_T(Re) := f_{k-1}(T \phi^{-1}(R) v_{k-1}) \hspace{3mm} \text{and} \\
    &h_T(Re) := f_k(T \phi^{-1}(R) \xi) - f_k * \scap_{\delta}(T \phi^{-1}(R) \xi),
\end{align*}
for all $R \in \SO(\R^{\ell+1})$.
These functions are indeed well-defined on $\S^{\ell}$, since $\Stab^G(v_{k-1}) = \Stab^G(\xi) = \phi^{-1}(\Stab^{\SO(\R^{\ell+1})}(e))$.
Note that $h_T$ can also be written as a function of $x \in \S^{\ell}$ by making use of the isometry $\psi^{-1}:\, r \S^{\ell}\rightarrow Gv_k$:
$$h_T(x) = f_k(T \psi^{-1}(rx)) - f_k * \scap_{\delta}(T \psi^{-1}(rx)).$$

Denote by $u := \psi(v_k)/r$ the point in $\S^{\ell}$ corresponding to $v_k$.
Making the change of variables $R = \phi(S)$, we obtain
\begin{align*}
    \int_G f_{k-1}(TSv_{k-1}) &\big( f_k(TSv_k) - f_k * \scap_{\delta} (TSv_k) \big) \,d\mu_G(S) \\
    &= \int_{\SO(\R^{\ell+1})} g_T(Re) \,h_T(Ru) \,d\mu_{\ell+1}(R) \\
    &= \int_{\SO(\R^{\ell+1})} g_T(Re) \bigg( \int_{\Stab(e)} h_T(RSu) \,d\nu_e(S) \bigg) \,d\mu_{\ell+1}(R),
\end{align*}
where we write $\Stab(e)$ for $\Stab^{\SO(\R^{\ell+1})}(e)$ and $\nu_e$ for its Haar measure.
Working as we did to obtain equation~\eqref{convsigma}, we see that the expression in parenthesis is equal to $h_T * \sigma_{e \cdot u}^{(\ell-1)}(Re)$, where $\sigma_{e \cdot u}^{(\ell-1)}$ is the uniform probability measure on the $(\ell-1)$-sphere $\Stab(e) u = \{y \in \S^{\ell}: e \cdot y = e \cdot u\}$
(and the convolution now takes place in $\S^{\ell}$ with $e$ as the north pole).
Making the change of variables $x = Re$, we then see that the expression above is equal to
$$\int_{\SO(\R^{\ell+1})} g_T(Re) \,h_T * \sigma_{e \cdot u}^{(\ell-1)}(Re) \,d\mu_{\ell+1}(R) = \int_{\S^{\ell}} g_T(x)\, h_T * \sigma_{e \cdot u}^{(\ell-1)}(x) \,d\sigma^{(\ell)}(x).$$
We conclude that the expression (\ref{whattobound}) we wish to bound is equal to
\begin{align*}
    \int_{\SO(\R^{d+1})} \bigg| \int_{\S^{\ell}} g_T(x) \,h_T * \sigma_{e \cdot u}^{(\ell-1)}(x) \,&d\sigma^{(\ell)}(x) \bigg| \,d\mu_{d+1}(T) \\
    &\leq \int_{\SO(\R^{d+1})} \|h_T * \sigma_{e \cdot u}^{(\ell-1)}\|_2 \,d\mu_{d+1}(T) \\
    &\leq \bigg( \int_{\SO(\R^{d+1})} \|h_T * \sigma_{e \cdot u}^{(\ell-1)}\|_2^2 \,d\mu_{d+1}(T) \bigg)^{1/2},
\end{align*}
where we applied Cauchy-Schwarz twice.

Let us now compute $e \cdot u$, which will be necessary for bounding $\|h_T * \sigma_{e \cdot u}^{(\ell-1)}\|_2^2$.
From the identity
$$\|re - ru\|_{\mathbb{R}^{\ell+1}}^2 = \|\psi^{-1}(re) - \psi^{-1}(ru)\|_{\mathbb{R}^{d+1}}^2 = \|\xi - v_k\|_{\mathbb{R}^{d+1}}^2,$$
we conclude that $r^2(2 - 2\, e \cdot u) = 2 - 2\, \xi \cdot v_k$, and so
$$e \cdot u = (\xi \cdot v_k - (1 - r^2))/r^2 \notin \{-1,\, 1\}$$
depends only on the ordering $(v_1, \dots, v_k)$ of $P$ and not on our later choices.
(Note that this value depends continuously on the points $v_1, \dots, v_k$, and so is bounded away from $\{-1,\, 1\}$ uniformly over all configurations $P'$ close enough to $P$.)

Now fix an arbitrary $\varepsilon > 0$.
By Parseval's Identity and Theorem~\ref{thm:conv_sphere}, we have that
\begin{align*}
    \|h_T * \sigma_{e \cdot u}^{(\ell-1)}\|_2^2 &= \sum_{n=0}^{\infty} \|\proj_n (h_T * \sigma_{e \cdot u}^{(\ell-1)})\|_2^2 \\
    &= \sum_{n=0}^{\infty} |(\widehat{\sigma}_{e \cdot u}^{(\ell-1)})_n|^2 \,\|\proj_n h_T\|_2^2 \\
    &= \sum_{n=0}^{\infty} P^{\ell}_n(e \cdot u)^2 \,\|\proj_n h_T\|_2^2.
\end{align*}
Since $e \cdot u \notin \{-1,\, 1\}$ is a constant depending only on $P$, by Theorem~\ref{gegenbauer} there exists $N = N(\varepsilon, P) \in \mathbb{N}$ such that $|P^{\ell}_n(e \cdot u)| \leq \varepsilon$ for all $n > N$.
(By that same theorem, this value of $N$ can be made robust to small perturbations of the value $e \cdot u$, which corresponds to small perturbations of the configuration $P$.)
Using that $|P^{\ell}_n(t)| \leq 1$ for all $-1 \leq t \leq 1$, we conclude that
$$\|h_T * \sigma_{e \cdot u}^{(\ell-1)}\|_2^2 \leq \sum_{n=0}^{N} \|\proj_n h_T\|_2^2 + \sum_{n > N} \varepsilon^2 \|\proj_n h_T\|_2^2.$$
The second term on the right-hand side of the inequality above is upper bounded by $\varepsilon^2 \|h_T\|_2^2 \leq \varepsilon^2$, so let us concentrate on the first term.

By identities (\ref{projectionidentity}) and (\ref{Pndidentity}), we have
\begin{align*}
    &\|\proj_n h_T\|_2^2 = \int_{\S^{\ell}} \bigg( \dim \mathscr{H}_n^{\ell+1} \int_{\S^{\ell}} h_T(y) P_n^{\ell}(x \cdot y) \,d\sigma(y) \bigg)^2 \,d\sigma(x) \\
    &\hspace{1cm} = (\dim \mathscr{H}_n^{\ell+1})^2 \int_{\S^{\ell}} \int_{\S^{\ell}} h_T(y) h_T(z) \bigg( \int_{\S^{\ell}} P_n^{\ell}(x \cdot y) P_n^{\ell}(x \cdot z) \,d\sigma(x) \bigg) \,d\sigma(y) \,d\sigma(z) \\
    &\hspace{1cm} = \dim \mathscr{H}_n^{\ell+1} \int_{\S^{\ell}} \int_{\S^{\ell}} h_T(y) h_T(z) \,P_n^{\ell}(y \cdot z) \,d\sigma(y) \,d\sigma(z).
\end{align*}
Since $|P_n^{\ell}(y \cdot z)| \leq 1$ for all $y, z \in \S^{\ell}$, we conclude that
\begin{align*}
    \int_{\SO(d+1)} &\|\proj_n h_T\|_2^2 \,d\mu_{d+1}(T) \\
    &= \dim \mathscr{H}_n^{\ell+1} \int_{\S^{\ell}} \int_{\S^{\ell}} \bigg( \int_{\SO(d+1)} h_T(y) h_T(z) \,d\mu_{d+1}(T) \bigg) P_n^{\ell}(y \cdot z) \,d\sigma(y) \,d\sigma(z) \\
    &\leq \dim \mathscr{H}_n^{\ell+1} \int_{\S^{\ell}} \int_{\S^{\ell}} \bigg| \int_{\SO(d+1)} h_T(y) h_T(z) \,d\mu_{d+1}(T) \bigg| \,d\sigma(y) \,d\sigma(z).
\end{align*}

We now divide this last double integral on the sphere into two parts, depending on whether or not $y \cdot z$ is close to the extremal points $1$ or $-1$.
Thus, for some  parameter $0 < \gamma < 1$ to be chosen later, we write the double integral as
\begin{align*}
    \int_{\S^{\ell}} \int_{\S^{\ell}} &\bigg| \int_{\SO(d+1)} h_T(y) h_T(z) \,d\mu_{d+1}(T) \bigg| \mathbbm{1} \big\{ |y \cdot z| > 1 - \gamma \big\} \,d\sigma(y) \,d\sigma(z) \\
    &+ \int_{\S^{\ell}} \int_{\S^{\ell}} \bigg| \int_{\SO(d+1)} h_T(y) h_T(z) \,d\mu_{d+1}(T) \bigg| \mathbbm{1} \big\{ |y \cdot z| \leq 1 - \gamma \big\} \,d\sigma(y) \,d\sigma(z).
\end{align*}
Since $-1 \leq h_T \leq 1$, the first term is at most
$$2 \int_{\S^{\ell}} \int_{\S^{\ell}} \mathbbm{1}\{y \cdot z > 1 - \gamma \} \,d\sigma(y) \,d\sigma(z) = 2 \sigma^{(\ell)}\big(\Scap_{\S^{\ell}}(e, \sqrt{2\gamma})\big).$$
To bound the second term, note that for fixed $y, z \in \S^{\ell}$ we have
\begin{align*}
    \int_{\SO(d+1)} &h_T(y) h_T(z) \,d\mu_{d+1}(T) \\
    &= \int_{\SO(d+1)} \big(f_k(T\widetilde{y}) - f_k * \scap_{\delta}(T\widetilde{y}) \big) \big(f_k(T\widetilde{z}) - f_k * \scap_{\delta}(T\widetilde{z}) \big) \,d\mu_{d+1}(T),
\end{align*}
where $\widetilde{y} := \psi^{-1}(ry)$ and $\widetilde{z} := \psi^{-1}(rz)$.
Moreover, we have
$$\|ry - rz\|_{\mathbb{R}^{\ell+1}}^2 = \|\widetilde{y} - \widetilde{z}\|_{\mathbb{R}^{d+1}}^2 \implies \widetilde{y} \cdot \widetilde{z} = 1 - r^2(1 - y \cdot z);$$
thus, whenever $|y \cdot z| \leq 1-\gamma$, we have $|\widetilde{y} \cdot \widetilde{z}| \leq 1-r^2\gamma$.
Using Lemma~\ref{2pointsphere} (with $f = f_k - f_k * \scap_{\delta}$, $g = f_k$ and $\gamma$ substituted by $r^2\gamma$) we conclude that the second term is bounded by $c_{d, r^2\gamma}(\delta)$.

Taking stock of everything, we obtain
\begin{align*}
    \int_{\SO(d+1)} &\|h_T * \sigma_{e \cdot u}^{(\ell-1)}\|_2^2 \,d\mu_{d+1}(T) \\
    &\leq \varepsilon^2 + \sum_{n=0}^N \dim \mathscr{H}_n^{\ell+1} \big( 2 \sigma^{(\ell)} \big(\Scap_{\S^{\ell}}(e, \sqrt{2\gamma})\big) + c_{d, r^2\gamma}(\delta) \big)
\end{align*}
for any $0 < \gamma < 1$.
Choosing $\gamma$ small enough depending on $\ell$, $\varepsilon$ and $N$, and then choosing $\delta$ small enough depending on $d$, $r^2\gamma$, $\varepsilon$ and $N$ (so ultimately only on $\varepsilon$ and $P$), we can bound the right-hand side above by $4\varepsilon^2$;
the expression (\ref{whattobound}) is then bounded by $2 \varepsilon$ in this case.

For such small values of $\delta$, we thus conclude from our telescoping sum trick (explained in Section~\ref{FourierSection}) that $|I_P(A) - I_P(A * \scap_{\delta})| \leq 2k\varepsilon$, proving the desired inequality since $\varepsilon > 0$ is arbitrary.
The claim that the upper bound can be made uniform inside some neighborhood of $P$ follows from analyzing our proof.
\end{proof}

We remark that the proof of the Counting Lemma given above is the only place where we explicitly make use of the assumption that a spherical configuration is admissible.
This assumption, however, will get inherited by all later results which make use of the Counting Lemma in their proofs.

\subsection{Continuity properties of the counting function}

Following the same script as in the Euclidean setting, we now consider other ways in which the counting function is robust to small perturbations.

It is again easy to show, using our telescoping sum trick, that $I_P$ is continuous in $L^{\infty}(\S^d)$ (and even in $L^{|P|}(\S^d)$) for all spherical configurations.
When the configuration considered is admissible, we obtain also the following significantly stronger continuity property of $I_P$ when restricting to bounded functions:

\begin{lem}[Weak$^*$ continuity] \label{lem:weak*cont_sphere}
If $P$ is an admissible configuration on $\S^d$, then $I_P$ is weak$^*$ continuous on the unit ball of $L^{\infty}(\S^d)$.
\end{lem}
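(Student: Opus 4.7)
The plan is to follow the script of the Euclidean lemma (Lemma~\ref{lem:weak*cont_space}), with the cap-average $\scap_\delta$ playing the role of $\Qcal_\delta$ and the spherical Counting Lemma (Lemma~\ref{countingsphere}) replacing its Euclidean counterpart. Write $\ball_\infty$ for the closed unit ball of $L^\infty(S^d)$. Since $L^1(S^d)$ is separable, $\ball_\infty$ with the weak$^*$ topology is metrizable, so it suffices to prove sequential continuity: given $(f_i)_{i \geq 1} \subset \ball_\infty$ converging weak$^*$ to $f \in \ball_\infty$, we must show $I_P(f_i) \to I_P(f)$.

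First, I would show that for each fixed $\delta \in (0, 1]$ one has $\|f_i * \scap_\delta - f * \scap_\delta\|_\infty \to 0$. Pointwise convergence at every $x \in S^d$ is immediate from weak$^*$ convergence, since $\sigma(\Scap_\delta)^{-1} \mathbbm{1}_{\Scap(x, \delta)} \in L^1(S^d)$. The family $\{g * \scap_\delta : g \in \ball_\infty\}$ is uniformly Lipschitz with constant depending only on $\delta$ and $d$, because $|g * \scap_\delta(x) - g * \scap_\delta(y)|$ is controlled by $\sigma(\Scap(x, \delta) \triangle \Scap(y, \delta))/\sigma(\Scap_\delta)$, which scales linearly in $\|x - y\|$ for small displacements. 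Compactness of $S^d$ then upgrades pointwise convergence to uniform convergence.

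Next, I would observe that $I_P$ is continuous on $L^\infty(S^d)$: the telescoping sum trick applied to the multilinear form defining $I_P$ shows that a small $L^\infty$ perturbation of any single factor changes $I_P$ by a proportional amount, since $\mu$ is a probability measure. Consequently $I_P(f_i * \scap_\delta) \to I_P(f * \scap_\delta)$. Combining this with the Counting Lemma's bound $|I_P(g) - I_P(g * \scap_\delta)| \leq \eta_P(\delta)$ applied to both $g = f$ and $g = f_i$, the triangle inequality yields
\[
\limsup_{i \to \infty} |I_P(f_i) - I_P(f)| \leq 2 \eta_P(\delta),
\]
and letting $\delta \to 0^+$ concludes the proof.

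The one point requiring a bit of care is that Lemma~\ref{countingsphere} is stated only for indicator functions, whereas I need the bound for all $g \in \ball_\infty$. Its proof, however, establishes the estimate for arbitrary measurable $0 \leq f_j \leq 1$ via Fourier expansion and $L^2$-estimates that nowhere use positivity; splitting $g = g^+ - g^-$ and using multilinearity (again via the telescoping trick) extends this to $|g| \leq 1$ at the cost of a harmless constant factor in $\eta_P$. I expect this extension to be the main thing to verify carefully, since otherwise the spherical proof is a direct transcription of its Euclidean counterpart — in fact slightly simpler, as compactness of $S^d$ removes any need to localize to a bounded subdomain like $Q(0, R)$.
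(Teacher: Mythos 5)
Your proposal is correct and follows the same script as the paper's proof: pointwise convergence of $f_i * \scap_\delta$ upgraded to uniform convergence via the equi-Lipschitz bound and compactness of $S^d$, $L^\infty$-continuity of $I_P$ via the telescoping trick, and then a three-term triangle inequality closed by the Counting Lemma bound $\eta_P(\delta) \to 0$. Your point of caution about extending Lemma~\ref{countingsphere} from sets to $[-1,1]$-valued functions is well taken but somewhat overcautious: as you yourself observe, the Fourier/$L^2$ estimates in its proof never use positivity, so the bound holds for bounded measurable $f_j$ directly and the paper applies it to $f, f_i \in \ball_\infty$ without further ado; the $g^+ - g^-$ split is a valid alternative but unnecessary.
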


\begin{proof}
Denote the closed unit ball of $L^{\infty}(\S^d)$ by $\ball_{\infty}$, and let $(f_i)_{i \geq 1} \subset \ball_{\infty}$ be a sequence weak$^*$ converging to $f \in \ball_{\infty}$.
It will suffice to show that $\left(I_P(f_i)\right)_{i \geq 1}$ converges to $I_P(f)$.

Note that, for every $x \in \S^d$, $\delta > 0$, we have
\begin{align*}
    f_i * \scap_{\delta}(x) = &\frac{1}{\sigma(\Scap_{\delta})} \int_{\Scap(x, \delta)} f_i(y) \,d\sigma(y) \\
    &\xrightarrow{i \rightarrow \infty} \frac{1}{\sigma(\Scap_{\delta})} \int_{\Scap(x, \delta)} f(y) \,d\sigma(y) = f * \scap_{\delta}(x).
\end{align*}
Since $f * \scap_{\delta}$ and each $f_i * \scap_{\delta}$ are Lipschitz with the same constant (depending only on $\delta$) and $\S^d$ is compact, this easily implies that
$$\|f_i * \scap_{\delta} - f * \scap_{\delta}\|_{\infty} \rightarrow 0 \hspace{3mm} \text{as } i \rightarrow \infty.$$
In particular, we conclude
$\lim_{i \rightarrow \infty} I_P(f_i * \scap_{\delta}) = I_P(f * \scap_{\delta})$.

Since $P$ is admissible, by the spherical Counting Lemma we have
$$|I_P(f * \scap_{\delta}) - I_P(f)| \leq \eta_P(\delta) \hspace{3mm} \text{and} \hspace{3mm} |I_P(f_i * \scap_{\delta}) - I_P(f_i)| \leq \eta_P(\delta) \hspace{3mm} \text{for all } i \geq 1.$$
Choosing $i_0(\delta) \geq 1$ sufficiently large so that
$$|I_P(f_i * \scap_{\delta}) - I_P(f * \scap_{\delta})| \leq \eta_P(\delta) \hspace{5mm} \text{for all } i \geq i_0(\delta),$$
we conclude that
\begin{align*}
    |I_P(f) - I_P(f_i)| &\leq |I_P(f) - I_P(f * \scap_{\delta})| + |I_P(f * \scap_{\delta}) - I_P(f_i * \scap_{\delta})| \\
    & \hspace{1cm}
    + |I_P(f_i * \scap_{\delta}) - I_P(f_i)| \\
    &\leq 3 \eta_P(\delta) \hspace{5mm} \text{for all } i \geq i_0(\delta).
\end{align*}
Since $\delta > 0$ is arbitrary and $\eta_P(\delta) \rightarrow 0$ as $\delta \rightarrow 0$, this finishes the proof.
\end{proof}

Given some spherical configuration $P = \{v_1, \dots, v_k\} \subset \R^{d+1}$, let us write $\ball(P, r) \subset (\R^{d+1})^k$ for the ball of radius $r$ centered on $P$, where the distance from $P$ to $Q = \{u_1, \dots, u_k\}$ is given by
$$\|Q - P\|_{\infty} := \min_{\sigma\in \mathfrak{S}_k} \max_{1 \leq i \leq k} \|u_i - v_{\sigma(i)}\|.$$
If $P$ is an admissible spherical configuration, note that all configurations inside a small enough ball centered on $P$ will also be admissible.

We will later need an equicontinuity property for the family of counting functions $P \mapsto I_P(A)$, over all measurable sets $A \subseteq \S^d$;
this is given in the following lemma:

\begin{lem}[Equicontinuity] \label{lem:equicont_sphere}
For every admissible $P \subset \S^d$ and every $\varepsilon > 0$, there exists $\delta > 0$ such that
$$|I_Q(A) - I_P(A)| \leq \varepsilon \quad \text{for all $Q \in \ball(P, \delta)$, $A \subseteq \S^d$.}$$
\end{lem}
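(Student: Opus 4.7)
The plan is to follow the script of the Euclidean equicontinuity lemma (Lemma~\ref{lem:equicont_space}) almost verbatim, replacing cubes by spherical caps. The strategy decomposes the difference $|I_Q(A) - I_P(A)|$ via the blurred function $A * \scap_\rho$ for a suitably chosen scale $\rho$, using
\begin{equation*}
    |I_Q(A) - I_P(A)| \leq |I_Q(A) - I_Q(A * \scap_\rho)| + |I_Q(A * \scap_\rho) - I_P(A * \scap_\rho)| + |I_P(A * \scap_\rho) - I_P(A)|.
\end{equation*}
The first and third terms are controlled by the spherical Counting Lemma (Lemma~\ref{countingsphere}), while the middle term is handled via the telescoping sum trick using the fact that $A * \scap_\rho$ is Lipschitz on $S^d$.

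First I would invoke the uniformity of the Counting Lemma: there exist $r > 0$ and a function $\tilde{\eta}_P(\rho) \to 0$ as $\rho \to 0^+$ such that for every $P' \in \ball(P, r)$, every $\rho \in (0, 1]$, and every measurable $A \subseteq S^d$,
\begin{equation*}
    |I_{P'}(A) - I_{P'}(A * \scap_\rho)| \leq \tilde{\eta}_P(\rho).
\end{equation*}
(Here we may shrink $r$ further if necessary so that every $P' \in \ball(P, r)$ remains admissible.) Next, for fixed $\rho > 0$ I would show that $A * \scap_\rho$ is Lipschitz on $S^d$ with a constant $L(\rho)$ depending only on $\rho$ and $d$. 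The point is that for $x, y \in S^d$,
\begin{equation*}
    |A * \scap_\rho(x) - A * \scap_\rho(y)| \leq \frac{\sigma\bigl(\Scap(x, \rho) \triangle \Scap(y, \rho)\bigr)}{\sigma(\Scap_\rho)},
\end{equation*}
and a direct geometric estimate gives $\sigma(\Scap(x, \rho) \triangle \Scap(y, \rho)) \leq L(\rho)\, \|x - y\|\, \sigma(\Scap_\rho)$ for some $L(\rho) > 0$ (when $\|x-y\| \leq \rho$, say).

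With these two ingredients in hand, I apply the telescoping sum trick to the middle term. If $P = \{v_1, \dots, v_k\}$ and $Q = \{u_1, \dots, u_k\}$ with $\|u_i - v_i\| \leq \delta$, then writing $g = A * \scap_\rho$, the telescoping decomposition breaks $I_Q(g) - I_P(g)$ into $k$ summands, each of which is bounded in absolute value by
\begin{equation*}
    \int_{\SO(d+1)} \bigl| g(Ru_i) - g(Rv_i) \bigr| \,d\mu(R) \leq L(\rho) \|u_i - v_i\| \leq L(\rho)\, \delta,
\end{equation*}
since $\|R u_i - R v_i\| = \|u_i - v_i\|$. This gives $|I_Q(A * \scap_\rho) - I_P(A * \scap_\rho)| \leq k L(\rho) \delta$ uniformly in $A$.

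Combining, $|I_Q(A) - I_P(A)| \leq 2\tilde{\eta}_P(\rho) + k L(\rho) \delta$. To finish, first choose $\rho > 0$ small enough that $2 \tilde{\eta}_P(\rho) \leq \varepsilon/2$, and then choose $\delta \in (0, r]$ small enough (depending on this $\rho$) so that $k L(\rho) \delta \leq \varepsilon/2$. The only step requiring genuine thought is verifying the Lipschitz estimate for $A * \scap_\rho$, which is essentially the statement that the symmetric difference of two nearby spherical caps has small relative measure; this is the mild spherical analogue of the elementary cube computation $\meas(Q(x,\rho) \triangle Q(y,\rho)) \leq \rho^d - (\rho - \delta)^d$ used in the Euclidean proof, and presents no real obstacle.
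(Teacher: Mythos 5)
Your proposal is correct and follows essentially the same route as the paper: uniformity of the spherical Counting Lemma, the three-term triangle inequality through $A * \scap_\rho$, and the telescoping sum trick to control the middle term via continuity of the blurred function. The only cosmetic difference is in how the continuity of $A*\scap_\rho$ is quantified: you assert a genuine Lipschitz bound $\sigma\bigl(\Scap(x,\rho) \triangle \Scap(y,\rho)\bigr) \leq L(\rho)\|x-y\|\,\sigma(\Scap_\rho)$, whereas the paper sidesteps the need to establish a linear rate by using the simpler containment $\Scap(x,\rho-\delta) \subset \Scap(x,\rho)\cap\Scap(y,\rho)$ for $\|x-y\|\leq\delta$, giving $|A*\scap_\rho(x)-A*\scap_\rho(y)| \leq \bigl(\sigma(\Scap_\rho)-\sigma(\Scap_{\rho-\delta})\bigr)/\sigma(\Scap_\rho)$ and then choosing $\delta$ so this quantity is small. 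The paper's version requires only monotone continuity of $\delta \mapsto \sigma(\Scap_{\rho-\delta})$, which is immediate, so it is marginally more economical; your Lipschitz estimate is also true but carries a small additional verification burden that you flag but do not carry out. Either way the argument closes identically.
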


\begin{proof}
We will use the fact that the function $\eta_P$ obtained in the Counting Lemma can be made uniform inside a small ball centered on $P$.
In other words, there is $r > 0$ and a function $\eta_P': (0, 1] \rightarrow (0, 1]$ with $\lim_{t \rightarrow 0} \eta_P'(t) = 0$ such that
$$|I_Q(A) - I_Q(A * \scap_{\rho})| \leq \eta_P'(\rho) \quad \text{for all $Q \in \ball(P, r)$, $A \subseteq \S^d$.}$$

Now, for a given $\rho > 0$ and all $0 < \delta < \rho$, we see from the triangle inequality that
$$\|x - y\| \leq \delta \implies \Scap(x,\, \rho - \delta) \subset \Scap(x,\, \rho) \cap \Scap(y,\, \rho),$$
and so $\sigma \big(\Scap(x,\, \rho) \setminus \Scap(y,\, \rho)\big) \leq \sigma(\Scap_{\rho}) - \sigma(\Scap_{\rho - \delta})$.
This implies that, for any set $A \subseteq \S^d$ and any $x,\, y \in \S^d$ with $\|x - y\| \leq \delta$, we have
\begin{align*}
    |A * \scap_{\rho}(x) - A * \scap_{\rho}(y)|
    &= \frac{\big| \sigma(A \cap \Scap(x,\, \rho)) - \sigma(A \cap \Scap(y,\, \rho)) \big|}{\sigma(\Scap_{\rho})} \\
    &\leq \frac{\sigma \big(\Scap(x,\, \rho) \setminus \Scap(y,\, \rho)\big)}{\sigma(\Scap_{\rho})} \\
    &\leq \frac{\sigma(\Scap_{\rho}) - \sigma(\Scap_{\rho - \delta})}{\sigma(\Scap_{\rho})}.
\end{align*}
By our telescoping sum trick, whenever $\|Q - P\|_{\infty} \leq \delta$ we conclude that
$$|I_Q(A * \scap_{\rho}) - I_P(A * \scap_{\rho})| \leq k \frac{\sigma(\Scap_{\rho}) - \sigma(\Scap_{\rho - \delta})}{\sigma(\Scap_{\rho})}.$$

Take $\rho > 0$ small enough so that $\eta_P'(\rho) \leq \varepsilon/3$, and for this value of $\rho$ take $0 < \delta < r$ small enough so that $\sigma(\Scap_{\rho - \delta}) \geq (1 - \varepsilon/3k) \,\sigma(\Scap_{\rho})$.
Then, for any $Q \in \ball(P,\, \delta)$ and any measurable set $A \subseteq \S^d$, we have
\begin{align*}
    |I_Q(A) - I_P(A)| &\leq |I_Q(A) - I_Q(A * \scap_{\rho})| + |I_Q(A * \scap_{\rho}) - I_P(A * \scap_{\rho})| \\
    & \hspace{1cm}
    + |I_P(A * \scap_{\rho}) - I_P(A)| \\
    &\leq \eta_P'(\rho) + k \frac{\sigma(\Scap_{\rho}) - \sigma(\Scap_{\rho - \delta})}{\sigma(\Scap_{\rho})} + \eta_P'(\rho) \\
    &\leq \frac{\varepsilon}{3} + k \frac{\varepsilon}{3k} + \frac{\varepsilon}{3}
    = \varepsilon,
\end{align*}
as wished.
\end{proof}

\subsection{The spherical Supersaturation Theorem} \label{SupersatSectionSphere}

Having proven that the counting function for admissible spherical configurations is robust to various kinds of small perturbations, we next show that it also satisfies a useful \emph{supersaturation property}.

This is the second main technical tool we need to study the independence density in the spherical setting, and due to the fact that the unit sphere is compact both its statement and proof are somewhat simpler than in the Euclidean space setting.

\begin{thm} [Supersaturation Theorem] \label{thm:supersat_sphere}
For every admissible configuration $P$ on $\S^d$ and every $\varepsilon > 0$ there exists a constant $c(\varepsilon) > 0$ such that the following holds:
if $A \subseteq \S^d$ satisfies $\sigma(A) \geq \m_{\S^d}(P) + \varepsilon$, then $I_P(A) \geq c(\varepsilon)$.
\end{thm}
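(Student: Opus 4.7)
The plan is to imitate the proof of the weak supersaturation lemma (Lemma~\ref{lem:weak_supersat}) from the Euclidean setting. Since $S^d$ is compact, there is no need for the subsequent cube-averaging step used in Theorem~\ref{thm:supersat_space}, so the whole spherical statement follows directly from a single weak$^*$-compactness argument, provided that the two main technical ingredients established earlier continue to function on $S^d$, namely weak$^*$ continuity of $I_P$ for admissible $P$ (Lemma~\ref{lem:weak*cont_sphere}) and zero-measure removal (Lemma~\ref{lem:zero_meas_sphere}).

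First I would argue by contradiction: assume there is some $\varepsilon > 0$ and a sequence of measurable sets $(A_i)_{i \geq 1}$ in $S^d$, each of measure at least $\m_{S^d}(P) + \varepsilon$, for which $I_P(A_i) \to 0$. Since $L^1(S^d)$ is separable (as $S^d$ is a compact metric space with a finite Borel measure), the closed unit ball of $L^{\infty}(S^d)$ is metrizable in its weak$^*$ topology, and by Banach--Alaoglu it is also weak$^*$ compact. Passing to a subsequence, I may therefore assume $(A_i)$ converges weak$^*$ to some $f \in L^{\infty}(S^d)$ with $0 \leq f \leq 1$ almost everywhere. Testing against the constant function $1 \in L^1(S^d)$ gives $\int_{S^d} f \,d\sigma \geq \m_{S^d}(P) + \varepsilon$, and weak$^*$ continuity of $I_P$ (Lemma~\ref{lem:weak*cont_sphere}) yields $I_P(f) = \lim_{i \to \infty} I_P(A_i) = 0$.

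The next step is to pass from the bounded function $f$ back to a genuine set. Setting $B := \{x \in S^d :\, f(x) \geq \varepsilon/2\}$, the pointwise sandwich $(\varepsilon/2)\, B(x) \leq f(x) < \varepsilon/2 + B(x)$ holds almost everywhere, and therefore
$$I_P(B) \leq (2/\varepsilon)^k \,I_P(f) = 0 \hspace{3mm} \text{and} \hspace{3mm} \sigma(B) \geq \int_{S^d} f \,d\sigma - \varepsilon/2 \geq \m_{S^d}(P) + \varepsilon/2,$$
where $k = |P|$. Finally, Lemma~\ref{lem:zero_meas_sphere} lets me remove a null subset of $B$ to obtain an honestly $P$-avoiding set of the same measure, contradicting the definition of $\m_{S^d}(P)$ and closing the argument.

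I do not expect a serious obstacle: the weak$^*$ continuity and zero-measure removal lemmas on the sphere do all the heavy lifting, and the rest is a short soft-analysis routine. The only point that deserves a moment's care is verifying the $L^1$-separability needed to metrize the weak$^*$ topology on $\ball_{\infty}$, since sequential compactness (rather than just net compactness) is what powers the extraction of a convergent subsequence; but this is immediate on a compact metric space with a finite Borel measure.
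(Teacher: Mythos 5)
Your proposal is correct and follows essentially the same route as the paper: argue by contradiction, pass to a weak$^*$-convergent subsequence in the unit ball of $L^{\infty}(S^d)$, invoke weak$^*$ continuity of $I_P$ to get $I_P(f)=0$, pass from the limit function $f$ to a level set $B$ via the pointwise sandwich, and then contradict the zero-measure removal lemma. The only cosmetic difference is that the paper uses the threshold $\varepsilon$ (yielding $\sigma(B) > \m_{S^d}(P)$) where you use $\varepsilon/2$; both work, and your observation that compactness of $S^d$ dispenses with the cube-averaging step from the Euclidean proof correctly identifies why the spherical statement needs only the `weak supersaturation' argument.
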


\begin{proof}
Suppose for contradiction that the result is false;
then there exist some $\varepsilon > 0$ and some sequence $(A_i)_{i \geq 1}$ of sets, each of density at least $\m_{\S^d}(P) + \varepsilon$, which satisfy $\lim_{i \rightarrow \infty} I_P(A_i) = 0$.

Note that the unit ball $\ball_{\infty}$ of $L^{\infty}(\S^d)$ is weak$^*$ compact, and also metrizable in this topology (see~\cite[Chapter~2.6]{Megginson98}).
By possibly restricting to a subsequence, we may then assume that $(A_i)_{i \geq 1}$ converges in the weak$^*$ topology of $L^{\infty}(\S^d)$;
let us denote its limit by $A \in \ball_{\infty}$.
It is clear that $0 \leq A \leq 1$ almost everywhere, and
$\int_{\S^d} A(x) \,d\sigma(x) = \lim_{i \rightarrow \infty} \sigma(A_i) \geq \m_{\S^d}(P) + \varepsilon$.
By weak$^*$ continuity of $I_P$ (Lemma~\ref{lem:weak*cont_sphere}), we also have $I_P(A) = \lim_{i \rightarrow \infty} I_P(A_i) = 0$.

Now let $B := \{x \in \S^d:\, A(x) \geq \varepsilon\}$.
Since
$$\varepsilon B(x) \leq A(x) < \varepsilon + B(x) \quad \text{for a.e. } x \in \S^d,$$
we conclude that $I_P(B) \leq \varepsilon^{-|P|} I_P(A) = 0$ and
$$\sigma(B) > \int_{\S^d} A(x) \,d\sigma(x) - \varepsilon \geq \m_{\S^d}(P).$$
But this set $B$ contradicts Lemma~\ref{lem:zero_meas_sphere}, finishing the proof.
\end{proof}

It will be useful to also introduce a spherical analogue of the zooming-out operator,
which acts on measurable spherical sets and represents the points on the sphere around which the considered set has a somewhat high density.
Given quantities $\delta$, $\gamma > 0$, we denote by $\mathcal{Z}_{\delta}(\gamma)$ the operator which takes a measurable set $A \subseteq \S^d$ to the set
$$\mathcal{Z}_{\delta}(\gamma)[A] := \big\{x \in \S^d:\, d_{\Scap(x, \delta)}(A) \geq \gamma\big\}.$$
The most important property of the zooming-out operator is the following result:

\begin{cor} \label{cor:supersat_sphere}
For every admissible configuration $P$ on $\S^d$ and every $\varepsilon > 0$, there exists $\delta_0 > 0$ such that the following holds for all $\delta \leq \delta_0$:
if $A \subseteq \S^d$ satisfies
$$\sigma \big(\mathcal{Z}_{\delta}(\varepsilon)[A]\big) \geq \m_{\S^d}(P) + \varepsilon,$$
then $A$ contains a congruent copy of $P$.
\end{cor}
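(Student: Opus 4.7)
The plan is to mirror the proof of Corollary~\ref{cor:supersat_space} in the spherical setting, using the spherical Supersaturation Theorem (Theorem~\ref{thm:supersat_sphere}) and the spherical Counting Lemma (Lemma~\ref{countingsphere}) as the two main ingredients. The strategy is to use the zooming-out hypothesis to produce many copies of $P$ in the (discrete) set $\mathcal{Z}_\delta(\varepsilon) A$, then transfer this lower bound to a weighted count in the blurred version $A * \scap_\delta$, and finally pull back to an honest count of copies of $P$ inside $A$ itself.

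First I would let $c = c(\varepsilon) > 0$ be the constant provided by Theorem~\ref{thm:supersat_sphere} for the admissible configuration $P$ and the given $\varepsilon$. Writing $k := |P|$ and invoking the property $\eta_P(\delta) \to 0$ as $\delta \to 0$ from the Counting Lemma, I would choose $\delta_0 > 0$ small enough that $\eta_P(\delta) \leq \tfrac{1}{2}\varepsilon^k c$ for all $\delta \leq \delta_0$. Now fix any $\delta \leq \delta_0$ and any measurable set $A \subseteq S^d$ satisfying $\sigma(\mathcal{Z}_\delta(\varepsilon) A) \geq \m_{S^d}(P) + \varepsilon$. By the Supersaturation Theorem applied to the set $\mathcal{Z}_\delta(\varepsilon) A$, we obtain $I_P(\mathcal{Z}_\delta(\varepsilon) A) \geq c$.

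The key transfer step is the pointwise inequality
$$A * \scap_\delta(x) = d_{\Scap(x,\delta)}(A) \geq \varepsilon \cdot \mathcal{Z}_\delta(\varepsilon) A(x) \qquad \text{for all } x \in S^d,$$
which holds by the very definition of $\mathcal{Z}_\delta(\varepsilon) A$ (both sides are non-negative, and the left-hand side is at least $\varepsilon$ whenever the right-hand side equals $\varepsilon$). Since $I_P$ is an integral of a product of $k$ non-negative factors, this pointwise domination gives
$$I_P(A * \scap_\delta) \geq \varepsilon^k \, I_P(\mathcal{Z}_\delta(\varepsilon) A) \geq \varepsilon^k c.$$
Finally, applying the spherical Counting Lemma to $A$ and $\delta$, we deduce
$$I_P(A) \geq I_P(A * \scap_\delta) - \eta_P(\delta) \geq \varepsilon^k c - \tfrac{1}{2}\varepsilon^k c = \tfrac{1}{2}\varepsilon^k c > 0,$$
and this strictly positive count of copies of $P$ inside $A$ forces $A$ to contain a congruent copy of $P$ (since any set avoiding $P$ trivially has $I_P = 0$).

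No step in this argument is really the main obstacle, as the hard analytic content has already been absorbed into the Counting Lemma and the Supersaturation Theorem. The only point requiring mild care is ensuring that the choice of $\delta_0$ depends only on $P$ and $\varepsilon$ (not on $A$), which is automatic because $c(\varepsilon)$ and $\eta_P$ are functions of $P$ and $\varepsilon$ alone. One could also note that the same argument extends verbatim to the several-configuration setting using the remark following Theorem~\ref{thm:supersat_sphere}.
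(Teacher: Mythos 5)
Your proof is correct and matches the paper's argument step for step: apply the spherical Supersaturation Theorem to $\mathcal{Z}_\delta(\varepsilon)A$, transfer via the pointwise bound $A * \scap_\delta \geq \varepsilon \cdot \mathcal{Z}_\delta(\varepsilon)A$, and finish with the spherical Counting Lemma, choosing $\delta_0$ so that $\eta_P(\delta_0) < \varepsilon^k c(\varepsilon)$. The only cosmetic difference is that you fix $\delta_0$ explicitly at the outset rather than at the end.
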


\begin{proof}
By the Supersaturation Theorem, we know that
$$\sigma \big(\mathcal{Z}_{\delta}(\varepsilon)[A]\big) \geq \m_{\S^d}(P) + \varepsilon \implies I_P \big(\mathcal{Z}_{\delta}(\varepsilon)[A]\big) \geq c(\varepsilon)$$
holds for all $\delta > 0$.
By the Counting Lemma, we then have
\begin{align*}
    I_P(A) \geq I_P(A * \scap_{\delta}) - \eta_P(\delta)
    &\geq \varepsilon^{|P|} I_P \big(\mathcal{Z}_{\delta}(\varepsilon)[A]\big) - \eta_P(\delta) \\
    &\geq \varepsilon^{|P|} c(\varepsilon) - \eta_P(\delta).
\end{align*}
Since $\eta_P(\delta) \rightarrow 0$ as $\delta \rightarrow 0$, there is some $\delta_0 > 0$ such that for all $\delta \leq \delta_0$ we can conclude $I_P(A) > 0$;
this implies that $A$ contains a copy of $P$.
\end{proof}

\subsection{From the sphere to spherical caps}

We must now tackle the problem of obtaining a relationship between the independence density $\m_{\S^d}(P)$ of a given configuration $P \subset \S^d$ and its spherical cap version $\m_{\Scap(x, \rho)}(P)$, as this will be needed later.

In the Euclidean setting this was very easy to do (see Lemma~\ref{lem:bounds_space}), using the fact that we can tessellate $\R^d$ with cubes $Q(x, R)$ of any given side length $R > 0$.
This is no longer the case in the spherical setting, as it is impossible to completely cover $\S^d$ using non-overlapping spherical caps of some given radius;
in fact, this cannot be done even approximately if we require the radii of the spherical caps to be the same (as we did with the side length of the cubes in $\R^d$).

We will then need to use a much weaker `almost-covering' result, saying that we can cover almost all of the sphere by using finitely many non-overlapping spherical caps with possibly different radii.
Such a collection of disjoint spherical caps is called a \emph{cap packing}.
For technical reasons, we will also want the radii of the caps in this packing to be arbitrarily small.

\begin{lem} \label{densepacking}
For every $\varepsilon > 0$ there is a finite cap packing
$$\mathcal{P} = \big\{\Scap(x_i, \rho_i):\, 1 \leq i \leq N\big\}$$
of $\S^d$ with density $\sigma(\mathcal{P}) > 1 - \varepsilon$ and with radii $\rho_i \leq \varepsilon$ for all $1 \leq i \leq N$.
\end{lem}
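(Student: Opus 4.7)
The plan is to derive the result from Vitali's covering theorem applied to the doubling metric measure space $(S^d, \sigma)$. First I would verify the doubling property: using the standard estimate $\sigma(\Scap_\rho) \asymp \rho^d$ (with implicit constants depending only on $d$) as $\rho \to 0^+$, together with the positivity of $\sigma(\Scap_\rho)$ for $\rho$ bounded away from $0$, one obtains a constant $C_d$ such that $\sigma(\Scap(x, 2\rho)) \le C_d\, \sigma(\Scap(x, \rho))$ for all $x \in S^d$ and all $\rho \in (0, 1]$. This is exactly the doubling condition for the uniform measure on the sphere.

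Next, I would consider the family
\[
\mathcal{V} := \bigl\{\Scap(x, \rho):\, x \in S^d,\, 0 < \rho \le \varepsilon\bigr\},
\]
which is clearly a Vitali cover of $S^d$ since every point of $S^d$ is the center of caps in $\mathcal{V}$ of arbitrarily small radius. Vitali's covering theorem for doubling metric measure spaces then produces a countable pairwise-disjoint subfamily $\{\Scap(x_i, \rho_i)\}_{i \ge 1} \subseteq \mathcal{V}$ such that
\[
\sigma\Bigl(S^d \setminus \bigcup_{i \ge 1} \Scap(x_i, \rho_i)\Bigr) = 0.
\]
By disjointness and countable additivity, $\sum_{i \ge 1} \sigma(\Scap(x_i, \rho_i)) = 1$, so I can truncate to a finite $N$ for which $\sum_{i=1}^N \sigma(\Scap(x_i, \rho_i)) > 1 - \varepsilon$, and the collection $\mathcal{P} := \{\Scap(x_i, \rho_i):\, 1 \le i \le N\}$ meets all three requirements.

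I do not anticipate any real obstacle here beyond checking the doubling property, which is routine. If one prefers to avoid invoking the full Vitali theorem, the same conclusion can be reached by iterating the elementary $5r$-covering lemma: at each stage, one covers the (open) uncovered region $U_k$ by caps of radius at most $\varepsilon$ lying inside $U_k$, extracts a finite disjoint subfamily whose $5$-fold dilates still cover $U_k$, and observes that by doubling this subfamily itself must cover a fixed positive fraction of $\sigma(U_k)$. Since $\sigma(U_k)$ then decays geometrically with $k$, finitely many iterations suffice to drive the uncovered measure below $\varepsilon$, and concatenating the stage-wise packings yields $\mathcal{P}$.
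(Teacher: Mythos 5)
Your proposal is correct, and your main route is genuinely different from the paper's. The paper does not invoke the full Vitali covering theorem for doubling metric measure spaces; it uses only the elementary $5r$-covering lemma and iterates. Concretely, the paper starts with a single cap, and at stage $i$ covers the uncovered open region by small caps lying inside it, extracts a countable disjoint subfamily $\mathcal{Q}_i$ whose $5$-fold dilates cover the region, uses the doubling constant $K_d = \sup_{r>0}\sigma(\Scap_{5r})/\sigma(\Scap_r)$ to deduce $\sigma(\mathcal{Q}_i) \geq (1-\sigma(\mathcal{P}_{i-1}))/K_d$, truncates to a finite subfamily absorbing at least half that mass, and iterates; the uncovered measure then decays like $(1-\tfrac{1}{2K_d})^i$. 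Your first approach replaces this entire iteration by a single invocation of the Vitali covering theorem on a doubling space, which directly hands you a countable disjoint subfamily covering $S^d$ up to a null set, followed by a single truncation. That is shorter and arguably cleaner, at the cost of citing a deeper theorem; the paper's approach buys self-containment (only the elementary $5r$-lemma) in exchange for the explicit geometric-decay bookkeeping. Your fallback alternative is essentially a restatement of the paper's argument. One small imprecision there: the $5r$-lemma yields a \emph{countable}, not finite, disjoint subfamily whose dilates cover; you must first pass through that countable family and then truncate it (which your doubling estimate justifies), since the uncovered region $U_k$ is open, not compact, so you cannot expect a finite subfamily's dilates to cover it outright.
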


\begin{proof}
We will use the same notation for both a collection of caps and the set of points on $\S^d$ which belong to (at least) one of these caps.
The desired packing $\mathcal{P}$ will be constructed in several steps, starting with $\mathcal{P}_0 := \{\Scap(e, \varepsilon)\}$.

Now suppose $\mathcal{P}_{i-1}$ has already been constructed (and is finite) for some $i \geq 1$, and let us construct $\mathcal{P}_i$.
Define
$$\mathcal{C}_i := \big\{ \Scap \big( x,\, \min\{\varepsilon,\, \dist(x, \mathcal{P}_{i-1})\} \big):\, x\in \S^d \setminus \mathcal{P}_{i-1} \big\},$$
and note that $\mathcal{C}_i$ is a covering of $\S^d \setminus \mathcal{P}_{i-1}$ by caps of positive radii (since $\mathcal{P}_{i-1}$ is closed on $\S^d$).
By the Vitali Covering Lemma, there is a countable subcollection
$$\mathcal{Q}_i = \bigcup_{j=1}^{\infty} \{\Scap(x_j, r_j)\} \subset \mathcal{C}_i$$
of \emph{disjoint} caps in $\mathcal{C}_i$ such that
$\S^d \setminus \mathcal{P}_{i-1} \subseteq \bigcup_{j=1}^{\infty} \Scap(x_j, 5r_j)$.
In particular
$$1 - \sigma(\mathcal{P}_{i-1}) = \sigma(\S^d \setminus \mathcal{P}_{i-1})
\leq \sum_{j=1}^{\infty} \sigma(\Scap(x_j, 5r_j)) \leq K_d \,\sigma(\mathcal{Q}_i),$$
where we denote $K_d := \sup_{r > 0} \sigma(\Scap_{5r})/\sigma(\Scap_r) < \infty$.
Taking $N_i \in \mathbb{N}$ such that
$$\sum_{j=1}^{N_i} \sigma(\Scap(x_j, r_j)) \geq \sigma(\mathcal{Q}_i) - \frac{1 - \sigma(\mathcal{P}_{i-1})}{2 K_d},$$
we see that $\mathcal{P}_i' := \{\Scap(x_j, r_j):\, 1 \leq j \leq N_i\} \subset \S^d \setminus \mathcal{P}_{i-1}$ satisfies
$$\sigma(\mathcal{P}_i') \geq \frac{1 - \sigma(\mathcal{P}_{i-1})}{2 K_d}.$$

Now set $\mathcal{P}_i := \mathcal{P}_{i-1} \cup \mathcal{P}_i'$;
this is a finite cap packing with
\begin{align*}
    1 - \sigma(\mathcal{P}_i) = 1 - \sigma(\mathcal{P}_{i-1}) - \sigma(\mathcal{P}_i')
    &\leq (1 - \sigma(\mathcal{P}_{i-1})) \bigg(1 - \frac{1}{2 K_d}\bigg) \\
    &\leq (1 - \sigma(\Scap_{\varepsilon})) \bigg(1 - \frac{1}{2  K_d}\bigg)^i
\end{align*}
(where the last inequality follows by induction).
Taking $n \geq 1$ large enough so that $(1 - \sigma(\Scap_{\varepsilon})) \Big(1 - \frac{1}{2 K_d}\Big)^n < \varepsilon$, we see that $\mathcal{P} := \mathcal{P}_n$ satisfies all requirements.
\end{proof}

We can now obtain our analogue of Lemma~\ref{lem:bounds_space}, relating the two versions of independence density in the spherical setting:

\begin{lem} \label{spheretocap}
For every $\varepsilon > 0$, $\rho > 0$ there exists $t_0 > 0$ such that the following holds whenever $P_1, \dots, P_n \subset \S^d$ have diameter at most $t_0$:
$$\big| \m_{\Scap(x, \rho)}(P_1,\, \dots,\, P_n) - \m_{\S^d}(P_1,\, \dots,\, P_n) \big| < \varepsilon.$$
\end{lem}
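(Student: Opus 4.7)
The proof is in two parts. The direction $\m_{\Scap(x,\rho)}(P_1,\ldots,P_n) \geq \m_{S^d}(P_1,\ldots,P_n) - \varepsilon$ needs no assumption on the diameters of the $P_i$, and follows from a standard averaging argument. I would take $A \subseteq S^d$ avoiding every $P_i$ with $\sigma(A) \geq \m_{S^d}(P_1,\ldots,P_n) - \varepsilon$. By Fubini, $\mathbb{E}_{R \in \SO(d+1)}\,\sigma(RA \cap \Scap(x,\rho)) = \sigma(A)\,\sigma(\Scap_\rho)$, so some rotation $R$ achieves $d_{\Scap(x,\rho)}(RA \cap \Scap(x,\rho)) \geq \sigma(A)$. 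The set $RA \cap \Scap(x,\rho) \subseteq \Scap(x,\rho)$ avoids every $P_i$, giving the inequality.

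For the nontrivial direction $\m_{\Scap(x,\rho)}(P_1,\ldots,P_n) \leq \m_{S^d}(P_1,\ldots,P_n) + \varepsilon$, I would begin with $A \subseteq \Scap(x,\rho)$ of density $\alpha$ avoiding every $P_i$, and construct $\tilde A \subseteq S^d$ of density at least $\alpha - \varepsilon$ still avoiding every $P_i$. Apply Lemma~\ref{densepacking} with a sufficiently small parameter $\varepsilon' = \varepsilon'(\varepsilon,\rho,d) \ll \varepsilon$ to obtain a \emph{finite} cap packing $\mathcal{Q} = \{\Scap(y_j,r_j)\}_{j=1}^N$ of $S^d$ with $\sum_j \sigma(\Scap_{r_j}) > 1 - \varepsilon'$ and all $r_j \leq \varepsilon'$. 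Since $\mathcal{Q}$ is finite, $r_{\min} := \min_j r_j > 0$; set $t_0 := \eta\, r_{\min}$ where $\eta = \eta(\varepsilon,d) \in (0,1)$ is small. This is the $t_0$ promised in the statement; it depends only on $\varepsilon$ and $\rho$ through $\mathcal{Q}$.

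For each $j$ consider the shrunken cap $\Scap(y_j, r_j - t_0)$. These shrunken caps are pairwise at Euclidean distance at least $2 t_0$: the original caps $\Scap(y_j, r_j)$ are disjoint, so $\|y_j - y_k\| \geq r_j + r_k$ for $j \neq k$, and for $a \in \Scap(y_j, r_j - t_0)$ and $b \in \Scap(y_k, r_k - t_0)$ we get $\|a - b\| \geq \|y_j - y_k\| - (r_j - t_0) - (r_k - t_0) \geq 2 t_0$. To place content of density close to $\alpha$ in each shrunken cap I would use an averaging argument over sub-caps inside $\Scap(x,\rho)$: the boundary annulus $\Scap(x,\rho) \setminus \Scap(x, \rho - 2(r_j - t_0))$ has relative measure $O(\varepsilon'/\rho)$, so a short Fubini computation gives some $z_j \in \Scap(x,\rho)$ with $d_{\Scap(z_j, r_j - t_0)}(A) \geq \alpha - O(\varepsilon'/\rho)$. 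Transport $A \cap \Scap(z_j, r_j - t_0)$ by a rotation of $S^d$ sending $z_j$ to $y_j$, producing $B_j \subseteq \Scap(y_j, r_j - t_0)$ of the same density and still avoiding each $P_i$, and set $\tilde A := \bigcup_j B_j$.

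Since $\diam P_i \leq t_0 < 2 t_0$, no copy of any $P_i$ contained in $\tilde A$ can have points in two distinct $B_j$'s; such a copy must lie entirely within a single $B_j$ and hence correspond via the rotation to a copy of $P_i$ in $A$, a contradiction. For the density, $\sigma(\tilde A) \geq (\alpha - O(\varepsilon'/\rho)) \sum_j \sigma(\Scap_{r_j - t_0})$, and since $\sigma(\Scap_r)$ scales like $c_d r^d$ for small $r$, each shrinkage loses only a factor $(1 - t_0/r_j)^d \geq 1 - d\eta$, so $\sum_j \sigma(\Scap_{r_j - t_0}) \geq (1 - d\eta)(1 - \varepsilon')$. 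Choosing $\varepsilon'$ and $\eta$ appropriately small in terms of $\varepsilon, \rho, d$ yields $\sigma(\tilde A) \geq \alpha - \varepsilon$, completing the argument. The delicate step is the sub-cap density averaging; it succeeds precisely because the packing's radius bound $\varepsilon'$ can be chosen much smaller than $\rho$, making boundary effects in $\Scap(x,\rho)$ negligible at scale $r_j$.
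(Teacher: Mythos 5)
Your proposal follows essentially the same route as the paper's proof: the easy direction by an averaging/Fubini argument, and the hard direction via Lemma~\ref{densepacking}, shrinking each cap to leave gaps wider than $\diam P_i$, locating well-filled sub-caps of $\Scap(x,\rho)$ by another averaging argument, transporting them by rotations into the packing, and bookkeeping the density loss. One small slip: the chordal inequality ``$\|y_j - y_k\| \geq r_j + r_k$ for disjoint caps'' is not exact on $S^d$ (chordal distances are subadditive along a great circle, so disjointness only gives $\|y_j-y_k\| \geq r_j\sqrt{1-r_k^2/4} + r_k\sqrt{1-r_j^2/4}$); the correct route, as in the paper which shrinks radii by $2t_0$, is to argue via geodesic distances, where the needed lower bound of strictly more than $\diam P_i$ between distinct shrunken caps does follow for small radii $r_j \leq \varepsilon'$.
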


\begin{proof}
If $A \subset \S^d$ is a set which avoids $P_1, \dots, P_n$, then for every $x \in \S^d$ the set $A \cap \Scap(x, \rho) \subseteq \Scap(x, \rho)$ also avoids $P_1, \dots, P_n$.
Since $\mathbb{E}_{x \in \S^d}[d_{\Scap(x, \rho)}(A)] = \sigma(A)$, there must exist some $x \in \S^d$ such that
$$d_{\Scap(x, \rho)}(A \cap \Scap(x, \rho)) = d_{\Scap(x, \rho)}(A) \geq \sigma(A);$$
optimizing over $A$ we conclude that
$\m_{\Scap(x, \rho)}(P_1,\, \dots,\, P_n) \geq \m_{\S^d}(P_1,\, \dots,\, P_n)$.

For the opposite direction, let $\gamma \leq \varepsilon/4$ be small enough so that $\sigma(\Scap_{\rho + \gamma}) \leq (1 + \varepsilon/4)\, \sigma(\Scap_{\rho})$.
By Lemma~\ref{densepacking}, we know there is a cap packing
$$\mathcal{P} = \{\Scap(x_i, \rho_i):\, 1 \leq i \leq N\}$$
of $\S^d$ with $\sigma(\mathcal{P}) \geq 1 - \gamma$ and $0 < \rho_1, \dots, \rho_N \leq \gamma$.
Now let $t_0 > 0$ be small enough so that $\sigma(\Scap_{\rho_i - 2t_0}) \geq (1 - \varepsilon/4)\, \sigma(\Scap_{\rho_i})$ for all $1 \leq i \leq N$;
note that $t_0$ will ultimately depend only on $\varepsilon$ and $\rho$.

Fixing any configurations $P_1, \dots, P_n \subset \S^d$ of diameter at most $t_0$, let $A \subset \Scap(x, \rho)$ be a set which avoids all of them.
We shall construct a set $\widetilde{A} \subset \S^d$ which also avoids $P_1, \dots, P_n$, and which satisfies $\sigma(\widetilde{A}) > d_{\Scap(x, \rho)}(A) - \varepsilon$;
this will finish the proof.

For each $1 \leq i \leq N$, denote $\widetilde{\rho}_i := \rho_i - 2t_0 < \gamma$.
We have that
\begin{align*}
    \sigma(A) &= \int_{\S^d} d_{\Scap(y, \widetilde{\rho}_i)}(A) \,d\sigma(y) \\
    &= \int_{\Scap(x,\, \rho + \widetilde{\rho}_i)} d_{\Scap(y, \widetilde{\rho}_i)}(A) \,d\sigma(y) \\
    &\leq \int_{\Scap(x,\, \rho)} d_{\Scap(y, \widetilde{\rho}_i)}(A) \,d\sigma(y) + \sigma(\Scap_{\rho + \widetilde{\rho}_i}) - \sigma(\Scap_{\rho}).
\end{align*}
Since $\widetilde{\rho}_i < \gamma$, dividing by $\sigma(\Scap_{\rho})$ we obtain
\begin{align*}
    \mathbb{E}_{y \in \Scap(x, \rho)} \left[ d_{\Scap(y, \widetilde{\rho}_i)}(A) \right]
    &\geq \frac{\sigma(A)}{\sigma(\Scap_{\rho})} - \frac{\sigma(\Scap_{\rho + \widetilde{\rho}_i}) - \sigma(\Scap_{\rho})}{\sigma(\Scap_{\rho})} \\
    &> d_{\Scap(x, \rho)}(A) - \frac{\varepsilon}{4}.
\end{align*}
There must then exist $y_i \in \Scap(x,\, \rho)$ for which $d_{\Scap(y_i, \widetilde{\rho}_i)}(A) > d_{\Scap(x, \rho)}(A) - \varepsilon/4$;
fix one such $y_i$ for each $1 \leq i \leq N$, and let $T_{y_i \rightarrow x_i} \in \SO(\R^{d+1})$ be any rotation taking $y_i$ to $x_i$ (and thus taking $\Scap(y_i,\, \widetilde{\rho}_i)$ to $\Scap(x_i,\, \widetilde{\rho}_i)$).

We claim that the set
$$\widetilde{A} := \bigcup_{i=1}^N T_{y_i \rightarrow x_i} (A \cap \Scap(y_i,\, \widetilde{\rho}_i))$$
satisfies our requirements.
Indeed, we have
\begin{align*}
    \sigma(\widetilde{A}) = \sum_{i=1}^N \sigma(A \cap \Scap(y_i,\, \widetilde{\rho}_i))
    &= \sum_{i=1}^N d_{\Scap(y_i, \widetilde{\rho}_i)}(A) \cdot \sigma(\Scap_{\widetilde{\rho}_i}) \\
    &> \sum_{i=1}^N \left( d_{\Scap(x, \rho)}(A) - \frac{\varepsilon}{4} \right) \cdot \left( 1 - \frac{\varepsilon}{4} \right) \sigma(\Scap_{\rho_i}) \\
    &\geq \left( d_{\Scap(x, \rho)}(A) - \frac{\varepsilon}{2} \right) \sigma(\mathcal{P}) \\
    &> d_{\Scap(x, \rho)}(A) - \varepsilon.
\end{align*}
Moreover, since $\diam(P_j) \leq t_0$ and the caps $\Scap(x_i,\, \widetilde{\rho}_i)$ are (at least) $2t_0$-distant from each other, we see that any copy of $P_j$ in $\widetilde{A} \subset \bigcup_{i=1}^N \Scap(x_i,\, \widetilde{\rho}_i)$ must be entirely contained in one of the the caps $\Scap(x_i,\, \widetilde{\rho}_i)$.
But then it should also be contained (after rotation by $T_{y_i \rightarrow x_i}^{-1}$) in $A \cap \Scap(y_i,\, \widetilde{\rho}_i)$;
this shows that $\widetilde{A}$ does not contain copies of $P_j$ for any $1 \leq j \leq N$, since $A$ does not, and we are done.
\end{proof}

\subsection{Results on the spherical independence density}

We are finally ready to start a more detailed study of the independence density parameter in the spherical setting.

We start by providing a general lower bound on the independence density of several different configurations in terms of their individual independence densities:

\begin{lem}[Supermultiplicativity] \label{lem:supermult_sphere}
For all configurations $P_1, \dots, P_n$ on $\S^d$, we have
$$\m_{\S^d}(P_1,\, \dots,\, P_n) \geq \prod_{i=1}^n \m_{\S^d}(P_i).$$
\end{lem}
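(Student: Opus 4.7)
The plan is to mimic the structure of Bukh's Euclidean argument (Lemma~\ref{lem:lower_bound_space}), but replace translations of periodic sets by random rotations of sets on the sphere. Concretely, fix $\varepsilon > 0$ and for each $1 \leq i \leq n$ choose a measurable $A_i \subseteq S^d$ avoiding $P_i$ with $\sigma(A_i) \geq \m_{S^d}(P_i) - \varepsilon$. For rotations $R_1, \dots, R_n \in \SO(d+1)$, the rotated set $R_i A_i$ also avoids $P_i$ (since congruence is $\SO(d+1)$-invariant), so the intersection $\bigcap_{i=1}^n R_i A_i$ avoids every $P_i$ simultaneously, no matter what the $R_i$ are.

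Next I would compute the expected measure of this intersection when the rotations are drawn independently according to the Haar measure $\mu$ on $\SO(d+1)$. Writing the measure as an integral of indicator functions and applying Fubini gives
\begin{align*}
    \mathbb{E}_{R_1, \dots, R_n}\!\left[\sigma\!\left(\bigcap_{i=1}^n R_i A_i\right)\right]
    &= \int_{S^d} \prod_{i=1}^n \mathbb{E}_{R_i \in \SO(d+1)}\!\left[A_i(R_i^{-1} x)\right] d\sigma(x).
\end{align*}
The key observation is then that for each fixed $x \in S^d$, the random point $R_i^{-1} x$ is uniformly distributed on $S^d$, because $\mu$ is inverse-invariant on the compact group $\SO(d+1)$ and $S^d$ is a homogeneous $\SO(d+1)$-space. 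Hence $\mathbb{E}_{R_i}[A_i(R_i^{-1} x)] = \sigma(A_i)$ for every $x$, and the displayed expectation equals $\prod_{i=1}^n \sigma(A_i)$.

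By a standard averaging (pigeonhole) argument there exist specific rotations $R_1^*, \dots, R_n^*$ for which $\sigma\!\big(\bigcap_{i=1}^n R_i^* A_i\big) \geq \prod_{i=1}^n \sigma(A_i) \geq \prod_{i=1}^n (\m_{S^d}(P_i) - \varepsilon)$. Since this intersection is $P_i$-avoiding for each $i$, it witnesses the bound $\m_{S^d}(P_1, \dots, P_n) \geq \prod_{i=1}^n (\m_{S^d}(P_i) - \varepsilon)$. Letting $\varepsilon \to 0$ completes the proof.

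There is no real obstacle here; the only subtlety worth flagging is the replacement of the Euclidean translation-averaging by rotation-averaging, which works precisely because $S^d$ is a homogeneous space and Haar measure on $\SO(d+1)$ pushes forward to the uniform measure on $S^d$ under the orbit map $R \mapsto R^{-1} x$. In particular no $\diam P$-loss appears, so the spherical bound is actually slightly cleaner than its Euclidean counterpart and no preliminary truncation lemma (analogous to Lemma~\ref{lem:bounds_space}) is needed.
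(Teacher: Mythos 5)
Your proof is correct and follows essentially the same route as the paper: averaging the measure of $\bigcap_i R_i A_i$ over independent Haar-random rotations $R_i \in \SO(d+1)$, evaluating the expectation as $\prod_i \sigma(A_i)$ via Fubini and the homogeneity of $S^d$, and then applying pigeonhole. The extra remarks about inverse-invariance of Haar measure and the absence of a $\diam P$-loss are accurate but do not change the argument.
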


\begin{proof}
Choose, for each $1 \leq i \leq n$, a set $A_i \subset \S^d$ which avoids configuration $P_i$.
By taking independent rotations $R_i A_i$ of each set $A_i$, we see that
\begin{align*}
    \mathbb{E}_{R_1, \dots, R_n \in \SO(\R^{d+1})} \bigg[\sigma \bigg(\bigcap_{i=1}^n R_i A_i\bigg)\bigg]
    &= \int_{\S^d} \prod_{i=1}^n \mathbb{E}_{R_i \in \SO(\R^{d+1})} \big[A_i(R_i^{-1} x)\big] \,d\sigma(x) \\
    &= \prod_{i=1}^n \sigma(A_i).
\end{align*}
There must then exist $R_1, \dots, R_n \in \SO(\R^{d+1})$ for which
$$\sigma \bigg(\bigcap_{i=1}^n R_i A_i\bigg) \geq \prod_{i=1}^n \sigma(A_i).$$
Since $\bigcap_{i=1}^n R_i A_i$ avoids all configurations $P_1, \dots, P_n$ and the sets $A_1, \dots, A_n$ were chosen arbitrarily, the result follows.
\end{proof}

Using supersaturation, we can show that this lower bound is essentially tight when the configurations considered are all admissible and each one is at a different size scale.
Intuitively, this happens because the constraints of avoiding each of these configurations will act at distinct scales and thus not correlate with each other.

\begin{thm}[Asymptotic independence] \label{thm:main_sphere}
For every admissible configurations $P_1, \dots$, $P_n$ on $\S^d$ and every $0 < \varepsilon \leq 1$ there is a positive increasing function $f: (0, 1] \rightarrow (0, 1]$ such that the following holds:
whenever $0 < t_1, \dots, t_n \leq 1$ satisfy $t_{i+1} \leq f(t_i)$ for $1 \leq i < n$, we have
$$\bigg| \m_{\S^d}(t_1 P_1,\, \dots,\, t_n P_n) - \prod_{i=1}^n \m_{\S^d}(t_i P_i) \bigg| \leq \varepsilon.$$
\end{thm}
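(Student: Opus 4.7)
The plan is to establish the upper bound matching the supermultiplicative lower bound from Lemma~\ref{lem:supermult_sphere}, by induction on $n$, mimicking the Euclidean Theorem~\ref{thm:main_space} but with axis-parallel cubes replaced by spherical caps and Lemma~\ref{lem:bounds_space} replaced by Lemma~\ref{spheretocap}. The base case $n=1$ is trivial. For the inductive step, since the condition $t_{i+1} \leq f(t_i)$ will force $t_1 \geq t_2 \geq \cdots \geq t_n$, I would isolate the \emph{largest} dilate $t_1 P_1$ and analyze $A$ simultaneously at the coarse scale set by $t_1 P_1$ and at the fine scales of the remaining configurations.

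Fix $\varepsilon > 0$, set $\varepsilon' := \varepsilon/5$, and let $g = g_{\varepsilon'}$ be the function supplied by the inductive hypothesis for $n-1$ configurations. Given a measurable $A \subseteq S^d$ avoiding each $t_i P_i$, the argument splits into three steps. \emph{(Supersaturation at the coarse scale.)} Since $A$ avoids $t_1 P_1$, Corollary~\ref{cor:supersat_sphere} produces some $\delta_0 = \delta_0(t_1, \varepsilon') > 0$ with
\[ \sigma(\mathcal{Z}_{\delta_0}(\varepsilon') A) \leq \m_{S^d}(t_1 P_1) + \varepsilon'. \]
\emph{(Pointwise density at the fine scale.)} For every $x \in S^d$ the restriction $A \cap \Scap(x, \delta_0)$ avoids $t_2 P_2, \dots, t_n P_n$; provided the diameters $t_i \diam P_i$ are all below the threshold $t_0(\varepsilon', \delta_0)$ furnished by Lemma~\ref{spheretocap}, and provided $t_{i+1} \leq g(t_i)$ for $2 \leq i < n$ so that the inductive hypothesis applies, we obtain
\[ d_{\Scap(x, \delta_0)}(A) \leq \m_{\Scap(x, \delta_0)}(t_2 P_2, \dots, t_n P_n) \leq \prod_{i=2}^n \m_{S^d}(t_i P_i) + 2\varepsilon'. \]
\emph{(Averaging.)} Using the identity $\sigma(A) = \int_{S^d} d_{\Scap(x, \delta_0)}(A)\,d\sigma(x)$ and splitting the integral according to whether $x$ lies in $\mathcal{Z}_{\delta_0}(\varepsilon') A$ gives
\[ \sigma(A) \leq \bigl(\m_{S^d}(t_1 P_1) + \varepsilon'\bigr)\Bigl(\prod_{i=2}^n \m_{S^d}(t_i P_i) + 2\varepsilon'\Bigr) + \varepsilon' \leq \prod_{i=1}^n \m_{S^d}(t_i P_i) + \varepsilon. \]

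The function $f$ can then be defined by $f(t) := \min\bigl(t,\,g(t),\,t_0(\varepsilon',\,\delta_0(t,\varepsilon'))/D\bigr)$, where $D := \max_{i \geq 2} \diam P_i$; the factor $t$ guarantees $t_1 \geq t_2 \geq \cdots \geq t_n$, so that $t_i \diam P_i \leq t_2 D \leq t_0$ for every $i \geq 2$. To meet the "positive increasing" requirement one replaces $f$ by its monotonization $\tilde f(t) := \inf_{s \geq t} f(s)$, which stays strictly positive on $(0,1]$ because both $\delta_0(\cdot,\varepsilon')$ and $t_0(\varepsilon',\cdot)$ remain bounded below on any compact subinterval of $(0,1]$.

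The main obstacle will be Step 2, the local density bound. In the Euclidean setting one can tile $\mathbb{R}^d$ by cubes of any chosen side length and then average, but the sphere admits no exact tiling by caps of uniform radius; this is precisely why Lemma~\ref{spheretocap} (which itself rests on the Vitali-type packing Lemma~\ref{densepacking}) is essential to pass from $\m_{\Scap(x,\delta_0)}$ back to $\m_{S^d}$, and it is the diameter hypothesis in that lemma which forces the qualitative form $t_{i+1} \leq f(t_i)$ of the hypothesis here, in place of the clean ratio condition $t_{i+1}/t_i \to \infty$ available in the Euclidean Theorem~\ref{thm:main_space}.
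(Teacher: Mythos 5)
Your proposal is correct and follows essentially the same approach as the paper's proof: induction on $n$, isolating $t_1 P_1$ for the coarse-scale supersaturation bound via Corollary~\ref{cor:supersat_sphere}, using Lemma~\ref{spheretocap} to transfer the inductive bound from $S^d$ to small caps, and averaging over caps to combine the two estimates. The only additions beyond the paper's argument are minor: you explicitly insert the factor $t$ in the definition of $f$ to force monotone $t_i$, and you flag the need to replace $f$ by a monotonization to meet the \emph{increasing} requirement, a detail the paper leaves implicit.
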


\begin{proof}
We have already seen that $\m_{\S^d}(t_1 P_1,\, \dots,\, t_n P_n) \geq \prod_{i=1}^n \m_{\S^d}(t_i P_i)$, so it suffices to show that $\m_{\S^d}(t_1 P_1,\, \dots,\, t_n P_n) \leq \prod_{i=1}^n \m_{\S^d}(t_i P_i) + \varepsilon$ for suitably separated $t_1, \dots, t_n \leq 1$.
We will do so by induction on $n$, with the base case $n=1$ being trivial (and taking $f \equiv 1$, say).

Suppose then $n \geq 2$ and we have already proven the result for $n-1$ configurations.
Let $\tilde{f}: (0, 1] \rightarrow (0, 1]$ be the function promised by the theorem applied to the $n-1$ configurations $P_2, \dots, P_n$ and with accuracy $\varepsilon$,
so that whenever $0 < t_2 \leq 1$ and $0 < t_{j+1} \leq \tilde{f}(t_j)$ for each $2 \leq j < n$ we have
$$\m_{\S^d}(t_2 P_2,\, \dots,\, t_n P_n) \leq \prod_{j=2}^n \m_{\S^d}(t_j P_j) + \varepsilon.$$

By the corollary to the Supersaturation Theorem (Corollary~\ref{cor:supersat_sphere}), for all $0 < t_1 \leq 1$ there is $\delta_0 = \delta_0(\varepsilon;\, t_1 P_1) > 0$ such that
$$\sigma \big(\mathcal{Z}_{\delta_0}(\varepsilon)[A]\big) \geq \m_{\S^d}(t_1 P_1) + \varepsilon \implies A \text{ contains a copy of } t_1 P_1.$$
Applying Lemma~\ref{spheretocap} with radius $\rho = \delta_0$, we see there is $t_0 = t_0(\varepsilon,\, \delta_0) > 0$ for which
$$\m_{\Scap(x, \delta_0)}(t_2 P_2,\, \dots,\, t_n P_n) \leq \m_{\S^d}(t_2 P_2,\, \dots,\, t_n P_n) + \varepsilon$$
holds whenever $0 < t_2, \dots, t_n \leq t_0/2$.

Let now $0 < t_1, \dots, t_n \leq 1$ be numbers satisfying
$$t_2 \leq t_0(\varepsilon,\, \delta_0(\varepsilon;\, t_1 P_1))/2 \hspace{3mm} \text{and} \hspace{3mm} t_{j+1} \leq \tilde{f}(t_j) \text{ for all } 2 \leq j < n.$$
If $A \subset \S^d$ does not contain copies of $t_1 P_1, \dots, t_n P_n$, then by the preceding discussion we must have $\sigma \big(\mathcal{Z}_{\delta_0}(\varepsilon)[A]\big) < \m_{\S^d}(t_1 P_1) + \varepsilon$ and, for all $x \in \S^d$,
\begin{align*}
    d_{\Scap(x, \delta_0)}(A) \leq \m_{\Scap(x, \delta_0)}(t_2 P_2,\, \dots,\, t_n P_n)
    &\leq \m_{\S^d}(t_2 P_2,\, \dots,\, t_n P_n) + \varepsilon \\
    &\leq \prod_{j=2}^n \m_{\S^d}(t_j P_j) + 2\varepsilon.
\end{align*}
This means that, inside caps $\Scap(x, \delta_0)$ of radius $\delta_0$, $A$ has density less than $\varepsilon$ (when $x \notin \mathcal{Z}_{\delta_0}(\varepsilon)[A]$) except on a set of measure at most $\m_{\S^d}(t_1 P_1) + \varepsilon$, when it instead has density at most $\prod_{j=2}^n \m_{\S^d}(t_j P_j) + 2\varepsilon$.
Taking averages, we conclude that
\begin{align*}
    \sigma(A) &= \mathbb{E}_{x \in \S^d} \big[d_{\Scap(x, \delta)}(A)\big] \\
    &\leq \varepsilon + \big( \m_{\S^d}(t_1 P_1) + \varepsilon \big) \bigg( \prod_{j=2}^n \m_{\S^d}(t_j P_j) + 2\varepsilon \bigg) \\
    &\leq 6\varepsilon + \prod_{i=1}^n \m_{\S^d}(t_i P_i).
\end{align*}
It thus suffices to take the function $f: (0, 1] \rightarrow (0, 1]$ given by
$$f(t) = \min \bigg\{ \tilde{f}(t),\, \frac{t_0 \big(\varepsilon/6,\, \delta_0(\varepsilon/6;\, t P_1)\big)}{2} \bigg\}$$
to conclude the induction.
\end{proof}

Note that this result provides a partial answer to the analogue of question (Q1) in the spherical setting:
if $P$ is admissible, then $\m_{\S^d}(t_1 P,\, t_2 P,\, \dots,\, t_n P)$ decays exponentially with $n$ as the ratios $t_{j+1}/t_j$ between consecutive scales go to zero
(recall from Lemma~\ref{lem:reassuring} that $\m_{\S^d}(t P)$ is bounded away from both zero and one for $0 < t \leq 1$).

By considering an infinite sequence of `counterexamples' as we did in our proof of Bourgain's Theorem (Theorem~\ref{thmbourgain}), we immediately obtain from Theorem~\ref{thm:main_sphere} the following result:

\begin{cor} \label{cor:Bourgain_sphere}
Let $P \subset \S^d$ be an admissible configuration.
If $A \subseteq \S^d$ has positive measure, then there is some number $t_0 > 0$ such that $A$ contains a congruent copy of $t P$ for all $t \leq t_0$.
\end{cor}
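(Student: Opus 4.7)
The plan is to mimic the proof of Bourgain's theorem (Theorem~\ref{thmbourgain}) in the spherical setting, using Theorem~\ref{thm:main_sphere} and Lemma~\ref{lem:reassuring} in place of their Euclidean counterparts. Suppose for contradiction that some measurable set $A \subseteq S^d$ with $\sigma(A) > 0$ fails the conclusion; then there exists a sequence of positive reals $(t_j)_{j \geq 1}$ with $t_j \to 0$ such that $A$ avoids every $t_j P$. Discarding finitely many terms we may assume $t_j \leq 1$ for all $j$, and from the very definition of $\m_{S^d}$ we obtain
$$\sigma(A) \leq \m_{S^d}(t_{j_1} P,\, \dots,\, t_{j_n} P)$$
for every finite subselection $j_1 < j_2 < \cdots < j_n$ of indices.

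Now fix $\varepsilon > 0$ and choose $n$ large enough that $(1 - 1/|P|)^n \leq \varepsilon$. Let $f: (0, 1] \to (0, 1]$ be the increasing function promised by Theorem~\ref{thm:main_sphere} applied to the $n$ copies $P, \dots, P$ with tolerance $\varepsilon$; since $t_j \to 0$, we may recursively extract indices $j_1 < \dots < j_n$ with $t_{j_{i+1}} \leq f(t_{j_i})$ for $1 \leq i < n$. Combining Theorem~\ref{thm:main_sphere} with the upper bound $\m_{S^d}(tP) \leq 1 - 1/|P|$ from Lemma~\ref{lem:reassuring} (valid for all $0 < t \leq 1$, since admissibility implies that $P$ has at most $d < d+1$ points and is therefore contractible), we obtain
$$\m_{S^d}(t_{j_1} P,\, \dots,\, t_{j_n} P) \leq \prod_{i=1}^n \m_{S^d}(t_{j_i} P) + \varepsilon \leq \bigl(1 - 1/|P|\bigr)^n + \varepsilon \leq 2\varepsilon,$$
whence $\sigma(A) \leq 2\varepsilon$. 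Since $\varepsilon > 0$ was arbitrary we conclude $\sigma(A) = 0$, contradicting our initial assumption.

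There is no real obstacle beyond correctly invoking Theorem~\ref{thm:main_sphere}: the only subtlety is to fix $n$ \emph{before} selecting the subsequence, since the separation function $f$ in that theorem is allowed to depend on the number of forbidden configurations. Once $n$ is chosen and $f$ is revealed, the required rapidly-decaying subsequence exists automatically because the original sequence $(t_j)$ tends to zero.
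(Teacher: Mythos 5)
Your proof is correct and follows precisely the strategy the paper indicates (the paper merely writes that the result "is immediately obtained" by the same infinite-sequence-of-counterexamples argument used in the Euclidean proof of Theorem~\ref{thmbourgain}, in conjunction with Theorem~\ref{thm:main_sphere}). You correctly identify the one extra ingredient needed in the spherical setting — since $\m_{S^d}(tP)$ is not dilation-invariant, the uniform bound $\m_{S^d}(tP) \leq 1 - 1/|P|$ from Lemma~\ref{lem:reassuring} replaces the trivial $\m_{\mathbb{R}^d}(tP) = \m_{\mathbb{R}^d}(P) < 1$ — which is exactly what the paper points to in the remark following the corollary.
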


This corollary can be seen as the counterpart to Bourgain's Theorem in the spherical setting, where it impossible to consider arbitrarily large
dilates.
(The equivalent result of containing all sufficiently small dilates of a configuration in the Euclidean setting also holds with the same proof.)

We will next prove that the independence density function $P \mapsto \m_{\S^d}(P)$ is continuous on the set of admissible configurations on $\S^d$.
Before doing so, it is interesting to note that a similar result does \emph{not} hold for two-point configurations on the unit circle $\S^1$ (which can be seen as the very first instance of non-admissible configurations).
Indeed, it was shown by DeCorte and Pikhurko~\cite{Evan} that $\m_{\S^1}(\{u, v\})$ is \emph{discontinuous} at a configuration $\{u, v\} \subset \S^1$ whenever the arc length between $u$ and $v$ is a rational multiple of $2\pi$ with odd denominator.

\begin{thm}[Continuity of the independence density] \label{thm:cont_of_m_sphere}
For any $n \geq 1$, the function $(P_1, \dots, P_n) \mapsto \m_{\S^d}(P_1, \dots, P_n)$ is continuous on the set of $n$ admissible spherical configurations.
\end{thm}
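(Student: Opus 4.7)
The plan is to mirror the Euclidean proof of Theorem~\ref{thm:cont_of_m_space}, replacing the bounded cubes by the whole sphere (which is already compact, so no intermediate passage $\m_{\Scap(x, \rho)} \to \m_{S^d}$ is needed). The two key inputs are the spherical Supersaturation Theorem (Theorem~\ref{thm:supersat_sphere}) and the spherical Equicontinuity lemma (Lemma~\ref{lem:equicont_sphere}); together they yield both upper and lower semicontinuity of $\m_{S^d}$ at any tuple of admissible configurations. For notational simplicity I would first prove the case $n = 1$ and note that the general case is identical after replacing the supersaturation statement with its multi-configuration form (stated for the Euclidean case in the remark after Theorem~\ref{thm:supersat_space}, and proved identically in the spherical setting).

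For upper semicontinuity, fix $\varepsilon > 0$ and let $c(\varepsilon) > 0$ be the constant given by Theorem~\ref{thm:supersat_sphere} applied to $P$. By Lemma~\ref{lem:equicont_sphere} there is $\delta > 0$ such that $\|P' - P\|_{\infty} \leq \delta$ implies $|I_{P'}(A) - I_P(A)| < c(\varepsilon)$ for every measurable $A \subseteq S^d$. If $A$ avoids such a $P'$, then $I_{P'}(A) = 0$ forces $I_P(A) < c(\varepsilon)$, so by the contrapositive of supersaturation $\sigma(A) < \m_{S^d}(P) + \varepsilon$. Taking the supremum over all $P'$-avoiding sets gives $\m_{S^d}(P') \leq \m_{S^d}(P) + \varepsilon$.

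For lower semicontinuity, choose a $P$-avoiding set $A \subseteq S^d$ with $\sigma(A) \geq \m_{S^d}(P) - \varepsilon$ and then by inner regularity pick a compact subset $\widetilde{A} \subseteq A$ with $\sigma(\widetilde{A}) \geq \m_{S^d}(P) - 2\varepsilon$. Order the points of $P$ as $(v_1, \dots, v_k)$ and define the continuous function
\begin{equation*}
g_P(x_1, \dots, x_k, R) := \sum_{j=1}^k \|x_j - R v_j\|
\end{equation*}
on the compact set $\widetilde{A}^k \times \SO(d+1)$. Because $\widetilde{A}$ avoids $P$, its minimum $\gamma > 0$ is strictly positive. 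For any $P' = \{v_1', \dots, v_k'\}$ with $\|P' - P\|_{\infty} < \gamma/k$, the triangle inequality gives
\begin{equation*}
\sum_{j=1}^k \|x_j - R v_j'\| \geq g_P(x_1, \dots, x_k, R) - k\|P' - P\|_{\infty} > 0
\end{equation*}
for all $(x_1, \dots, x_k) \in \widetilde{A}^k$ and all $R \in \SO(d+1)$, so $\widetilde{A}$ also avoids $P'$. Therefore $\m_{S^d}(P') \geq \sigma(\widetilde{A}) \geq \m_{S^d}(P) - 2\varepsilon$, completing the proof.

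The main obstacle is largely conceptual rather than technical: everything follows by pairing the two tools developed in Sections 3.2--3.3. The one point that deserves care is in the multi-configuration generalisation, where the supersaturation conclusion is that \emph{some} $I_{P_i}(A) \geq c(\varepsilon)$; applying equicontinuity to each coordinate $P_i$ separately (with tolerance $c(\varepsilon)/2$) and intersecting the resulting neighbourhoods still yields the contradiction, so upper semicontinuity goes through unchanged. The lower semicontinuity argument also extends routinely: define one function $g_{P_i}$ per coordinate and take the minimum of the corresponding thresholds $\gamma_i/|P_i|$.
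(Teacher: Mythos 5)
Your proposal matches the paper's proof essentially step-for-step: the same pairing of the spherical Supersaturation Theorem with the Equicontinuity Lemma for upper semicontinuity, and the same compactness argument via the function $g_P$ and the triangle inequality for lower semicontinuity (the $\gamma/k$ vs.\ $\gamma/2k$ threshold is an immaterial slack choice). Your added remark on the multi-configuration case — applying equicontinuity coordinate-wise with tolerance below $c(\varepsilon)$ to handle the ``some $I_{P_i}(A) \geq c(\varepsilon)$'' conclusion — correctly fills in a detail the paper leaves implicit.
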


\begin{proof}
For simplicity of exposition we will prove the result in the case of only one forbidden configuration, but the general case follows from the same argument.

Fix some $\varepsilon > 0$ and some admissible configuration $P$ on $\S^d$, and let $c(\varepsilon) > 0$ be the constant promised by the Supersaturation Theorem (Theorem \ref{thm:supersat_sphere}).
By equicontinuity (Lemma~\ref{lem:equicont_sphere}) there exists $\delta > 0$ such that
$$|I_Q(A) - I_P(A)| \leq \varepsilon \quad \text{for all $Q \in \ball(P, \delta)$, $A \subseteq \S^d$.}$$
Suppose $Q \in \ball(P, \delta)$ and $A \subset \S^d$ is a measurable set avoiding $Q$;
we must then have $I_P(A) \leq c(\varepsilon)$, and so $\sigma(A) \leq \m_{\S^d}(P) + \varepsilon$.
Optimizing over $A$, we conclude that $\m_{\S^d}(Q) \leq \m_{\S^d}(P) + \varepsilon$ whenever $Q \in \ball(P, \delta)$.

Now write $P = \{v_1, \dots, v_k\}$, and consider the function
$g_P: (\S^d)^k \times \SO(\R^{d+1}) \rightarrow \R$ given by
$$g_P(x_1, \dots, x_k, T) := \sum_{i=1}^k \|x_i - T v_i\|.$$
Note that this function is continuous and that $\min_{T \in \SO(\R^{d+1})} g_P(x_1, \dots, x_k, T) = 0$ if and only if $(x_1, \dots, x_k)$ is congruent to $(v_1, \dots, v_k)$.

By inner regularity, we can find a compact set $A \subset \S^d$ which avoids $P$ and has measure $\sigma(A) \geq \m_{\S^d}(P) - \varepsilon$.
The continuous function $g_P$ attains a minimum on the compact set $A^k \times \SO(\R^{d+1})$;
denote this minimum by $\gamma$, and note that $\gamma > 0$ since $A$ avoids $P$.
Let us show that $A$ also avoids $Q$, for all $Q \in \ball(P, \gamma/2k)$.
Indeed, writing $Q = \{u_1, \dots, u_k\}$ (with the labels chosen so as to minimize their distance to the corresponding points of $P$), for any points $x_1, \dots, x_k \in A$ and any $T \in \SO(\R^{d+1})$ we have that
\begin{align*}
    \sum_{i=1}^k \|x_i - T u_i\| &\geq \sum_{i=1}^k \big|\|x_i - T v_i\| - \|T u_i - T v_i\|\big| \\
    &\geq g_P(x_1, \dots, x_k, T) - k \|Q - P\|_{\infty},
\end{align*}
which is at least $\gamma/2 > 0$ if $\|Q - P\|_{\infty} \leq \gamma/2k$.
For such configurations we then obtain
$$\m_{\S^d}(Q) \geq \sigma(A) \geq \m_{\S^d}(P) - \varepsilon.$$

We conclude that $|\m_{\S^d}(Q) - \m_{\S^d}(P)| \leq \varepsilon$ whenever $\|Q - P\|_{\infty} \leq \min \{\delta, \gamma/2k\}$, finishing the proof.
\end{proof}

As our definition of the independence density $\m_{\S^d}(P)$ involved a supremum over all $P$-avoiding measurable sets $A \subseteq \S^d$, it is not immediately clear whether there actually exists a measurable $P$-avoiding set attaining this extremal value of density.
In fact, such a result is \emph{false} in the case where $d = 1$ and we are considering two-point configurations $\{u, v\} \subset \S^1$:
if the length of the arc between $u$ and $v$ is \emph{not} a rational multiple of $\pi$, it was shown by Sz\'ekely~\cite{Szekely} that $\m_{\S^1}(\{u, v\}) = 1/2$ but there is no $\{u, v\}$-avoiding measurable set of density $1/2$.

We will now show that extremizer sets exist whenever the configuration we are forbidding is admissible.
Note that the result also holds (with essentially unchanged proof) when forbidding several admissible configurations;
this generalizes to higher-order configurations a theorem of DeCorte and Pikhurko~\cite{Evan} for forbidden distances on the sphere.

\begin{thm}[Existence of extremizers]
If $P \subset \S^{d}$ is an admissible configuration, then there exists a $P$-avoiding measurable set $A \subseteq \S^d$ attaining $\sigma(A) = \m_{\S^d}(P)$.
\end{thm}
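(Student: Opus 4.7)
The plan is to follow the same strategy used for the analogous Euclidean existence theorem, which becomes considerably cleaner on the sphere thanks to compactness and finite measure. First, I would pick a maximizing sequence of $P$-avoiding measurable sets $(A_i)_{i \geq 1} \subseteq S^d$ with $\sigma(A_i) \to \m_{S^d}(P)$. Since the closed unit ball $\ball_{\infty}$ of $L^{\infty}(S^d)$ is weak$^*$ compact and metrizable, after restricting to a subsequence I may assume that $(A_i)$ converges in the weak$^*$ topology to some $\widetilde{A} \in \ball_{\infty}$, which automatically satisfies $0 \leq \widetilde{A} \leq 1$ almost everywhere and $\int_{S^d} \widetilde{A}\, d\sigma = \lim_{i \to \infty} \sigma(A_i) = \m_{S^d}(P)$.

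Since $P$ is admissible, weak$^*$ continuity of $I_P$ (Lemma \ref{lem:weak*cont_sphere}) gives $I_P(\widetilde{A}) = \lim_{i \to \infty} I_P(A_i) = 0$. I would then set $A := \{x \in S^d : \widetilde{A}(x) > 0\} = \text{supp}\, \widetilde{A}$ and show that $I_P(A) = 0$ as well. This is done by a level-set argument as in the proof of the weak supersaturation lemma: for each $\varepsilon > 0$, letting $B_{\varepsilon} := \{x \in S^d : \widetilde{A}(x) \geq \varepsilon\}$, the pointwise bound $\varepsilon B_{\varepsilon}(x) \leq \widetilde{A}(x)$ gives $\varepsilon^k I_P(B_{\varepsilon}) \leq I_P(\widetilde{A}) = 0$, so $I_P(B_{\varepsilon}) = 0$ for every $\varepsilon > 0$; since $B_{1/n} \uparrow A$ as $n \to \infty$, monotone convergence applied to the defining integral of $I_P$ yields $I_P(A) = 0$.

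Because the indicator of $A$ dominates $\widetilde{A}$ pointwise, we have $\sigma(A) \geq \int_{S^d} \widetilde{A}\, d\sigma = \m_{S^d}(P)$. To finish, I would invoke the zero-measure removal lemma (Lemma \ref{lem:zero_meas_sphere}) to extract a $P$-avoiding subset $A' \subseteq A$ with $\sigma(A \setminus A') = 0$, so that $\sigma(A') = \sigma(A) \geq \m_{S^d}(P)$; the reverse inequality $\sigma(A') \leq \m_{S^d}(P)$ holds trivially by the definition of the independence density, which forces equality and exhibits $A'$ as the desired extremizer. The substantive work has already been absorbed into the earlier tools (weak$^*$ continuity of $I_P$ and zero-measure removal), so no real obstacle remains — one only has to be careful when passing from the limit function $\widetilde{A}$ to its support $A$ to verify that the vanishing of $I_P$ survives this discretization, which the simple level-set argument above handles cleanly.
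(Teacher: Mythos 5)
Your proposal is correct and follows essentially the same route as the paper: pass to a weak$^*$ limit $\widetilde{A}$ of a maximizing sequence, use weak$^*$ continuity of $I_P$ to get $I_P(\widetilde{A})=0$, transfer this to the support via the level-set argument, and then apply zero-measure removal together with the definition of $\m_{S^d}(P)$ to conclude. The paper packages the final step as showing that $\widetilde{A}$ is in fact $\{0,1\}$-valued a.e.\ (by squeezing $\sigma(\text{supp}\,\widetilde{A})$ between $\int\widetilde{A}$ and $\m_{S^d}(P)$), whereas you pass directly to the cleaned-up support; the two are logically equivalent.
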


\begin{proof}
Let $A_1, A_2, \dots \subseteq \S^d$ be a sequence of $P$-avoiding measurable sets satisfying $\lim_{i \rightarrow \infty} \sigma(A_i) = \m_{\S^d}(P)$.
By passing to a subsequence if necessary, we may assume that $(A_i)_{i \geq 1}$ converges to some function $A \in \ball_{\infty}$ in the weak$^*$ topology of $L^{\infty}(\S^d)$.
We shall prove two things:
\begin{itemize}
    \item[$(i)$] the limit function $A$ is $\{0,\, 1\}$-valued almost everywhere, so we can identify it with its support $\text{supp} \,A$;
    \item[$(ii)$] after possibly modifying it on a zero-measure set, this set $A$ will avoid $P$.
\end{itemize}
With these two results we will be done, since $\sigma(A) = \lim_{i \rightarrow \infty} \sigma(A_i) = \m_{\S^d}(P)$.

By weak$^*$ convergence we know that $0 \leq A \leq 1$ almost everywhere, and by weak$^*$ continuity (Lemma~\ref{lem:weak*cont_sphere}) we also have
$I_P(A) = \lim_{i \rightarrow \infty} I_P(A_i) = 0$.
From this we easily conclude that $I_P(\text{supp} \,A) = 0$, and also
\begin{equation} \label{suppA>A}
    \sigma(\text{supp} \,A) = \int_{\S^d} \text{supp} \,A(x) \,d\sigma(x) \geq \int_{\S^d} A(x) \,d\sigma(x) = \m_{\S^d}(P).
\end{equation}
But Lemma~\ref{lem:zero_meas_sphere} implies that $\sigma(\text{supp} \,A) \leq \m_{\S^d}(P)$, which by (\ref{suppA>A}) and the fact that $0 \leq A \leq 1$ can only happen if $A = \text{supp} \,A$ almost everywhere.
This proves $(i)$.

Identifying $A$ with its support and using that $I_P(A) = 0$, Lemma~\ref{lem:zero_meas_sphere} implies we can remove a zero-measure subset of $A$ in order to remove all copies of $P$.
This proves item $(ii)$ and finishes the proof of the theorem.
\end{proof}

To conclude, let us make explicit what we can say about the possible independence densities when forbidding $n$ distinct contractions of an admissible configuration $P$;
due to lack of dilation invariance in the spherical setting, characterizing these values in terms of simpler quantities is much harder than it is in the Euclidean setting.

Denote
$\mathcal{M}_n^{\S^d}(P) := \big\{\m_{\S^d}(t_1 P,\, t_2 P,\, \dots,\, t_n P):\, 0 < t_1 < t_2 < \dots < t_n \leq 1\big\}$.
Due to continuity of $\m_{\S^d}$ (Theorem~\ref{thm:cont_of_m_sphere}) this set is an interval, and its upper extremity is $\sup_{0 < t \leq 1} \m_{\S^d}(t P)$.
By supermultiplicativity (Lemma~\ref{lem:supermult_sphere}) the lower extremity of $\mathcal{M}_n^{\S^d}(P)$ is at least $\inf_{0 < t \leq 1} \m_{\S^d}(t P)^n$, and by asymptotic independence (Theorem~\ref{thm:main_sphere}) it can be at most
$\inf_{0 < t \leq 1} \m_{\S^d}(t P) \cdot \liminf_{t \rightarrow 0} \m_{\S^d}(t P)^{n-1}$.

\section{Concluding remarks and open problems} \label{Conclusion}

Our results leave open the question of what happens when the configurations we forbid are \emph{not} admissible.
There are two different reasons for a given configuration (either on the space or on the sphere) to not be admissible, so let us examine them separately.

The fist reason is that $P$ is degenerate, meaning that its points are affinely dependent if we are on $\R^d$ or linearly dependent if we are on $\S^d$.
In the Euclidean setting, Bourgain~\cite{Bourgain} showed an example of sets $A_d \subset \R^d$ (for each $d \geq 2$) which have positive density but which avoid arbitrarily large dilates of a degenerate three-point configuration of the form $\{-v, 0, v\}$.
These sets then show that the conclusion of Bourgain's Theorem (and thus also the conclusion of our Theorem~\ref{thm:main_space}) is false for this degenerate configuration.

This counterexample was later generalized by Graham~\cite{Graham}, who showed that a result like Bourgain's Theorem \emph{can only hold} if $P$ is contained on the surface of some sphere of finite radius (as is always the case when $P$ is non-degenerate).
In fact, Graham's result implies (for instance) that $$\m_{\R^d}\big(P,\, \sqrt{3}P,\, \sqrt{5}P,\, \sqrt{7}P,\, \dots\big) > 0$$
whenever $P \subset \R^d$ is nonspherical, that is, not contained on the surface of any sphere.
Some kind of non-degeneracy hypothesis is thus necessary both for Bourgain's result and for our
Theorem~\ref{thm:main_space}.\footnote{We believe that the same is true for the spherical analogue of Theorem~\ref{thm:main_space}, namely Theorem~\ref{thm:main_sphere}, though we do not know of a counterexample.}

It is interesting to note, however, that more recent results of Ziegler~\cite{ZieglerFirst, Ziegler} (generalizing a theorem of Furstenberg, Katznelson and Weiss~\cite{FurstKatzWeiss} for three-point configurations) show that every set $A \subseteq \R^d$ of positive upper density is \emph{arbitrarily close} to containing all large enough dilates of any finite configuration $P \subset \R^d$.
More precisely, denoting by $A_{\delta}$ the set of all points at distance at most $\delta$ from the set $A$, Ziegler proved the following:

\begin{thm}
Let $A \subseteq \R^d$ be a set of positive upper density and $P \subset \R^d$ be a finite set.
Then there exists $t_0 > 0$ such that, for any $t \geq t_0$ and any $\delta > 0$, the set $A_{\delta}$ contains a configuration congruent to $t P$.
\end{thm}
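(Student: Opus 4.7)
The plan is to use ergodic-theoretic methods in the spirit of Furstenberg, Katznelson and Weiss. First, I would apply a Furstenberg correspondence principle to lift the set $A \subseteq \mathbb{R}^d$ of positive upper density to a measure-preserving action $(X, \mathcal{B}, \mu, (T^v)_{v \in \mathbb{R}^d})$ of the group $\mathbb{R}^d$, together with a measurable set $E \subseteq X$ of positive measure, so that the existence of many translated copies of a tuple $(v_1, \dots, v_k)$ inside $A$ is controlled by the positivity of $\mu(E \cap T^{-v_1} E \cap \dots \cap T^{-v_k} E)$. The $\delta$-fattening in the statement would be absorbed into an auxiliary smoothing of the indicator of $E$, say by convolving with a nonnegative bump function supported on a ball of radius proportional to $\delta$; this is what allows pointwise recurrence-type questions to be replaced by $L^2$-type questions and is crucial for handling degenerate $P$.

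Writing $P = \{v_1, \dots, v_k\}$ and using the rotation freedom in the notion of congruence, it would suffice to show that for each fixed $\delta > 0$ and all sufficiently large $l > 0$, the averaged multilinear expression
$$\phi_\delta(l) := \int_{\SO(d)} \int_X \prod_{i=1}^k f_\delta(T^{l R v_i} x) \,d\mu(x) \,d\mu(R)$$
is bounded below by a positive constant depending only on $\delta$ and $\mu(E)$, where $f_\delta$ is the smoothed indicator of $E$ just described. The continuity brought by the $\delta$-smoothing is essential: without it, Bourgain's construction of sets avoiding arbitrarily large copies of $\{-1, 0, 1\}$ would block any such uniform estimate, so any valid proof must exploit the smoothing in some essential way.

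The technical core of the argument is then the asymptotic analysis of $\phi_\delta(l)$ as $l \to \infty$. Following the Host--Kra--Ziegler philosophy, one would identify a \emph{characteristic factor} $Y$ of $X$ for this rotation-averaged multilinear expression --- a factor on which $\phi_\delta(l)$ is asymptotically unchanged when each $f_\delta$ is replaced by its conditional expectation onto $Y$ --- and then carry out the analysis on $Y$, where $f_\delta$ becomes a structured (for instance, nilpotent) function. The main obstacle, and the hardest step, is precisely this identification of characteristic factors in the continuous $\mathbb{R}^d$-setting enriched by the $\SO(d)$-averaging, together with proving appropriate equidistribution of the parametrized orbits $l \mapsto (l R v_1, \dots, l R v_k)$ inside these factors so that positivity of $\lim_{l \to \infty} \phi_\delta(l)$ can be deduced from $\mu(E) > 0$. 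In the non-degenerate case this reduces to the Fourier-analytic argument already used by Bourgain, but in the degenerate case the rotational averaging must genuinely decouple the multilinear constraints, which is the heart of Ziegler's extension of the nilpotent structure theory and the truly non-trivial part of the proof.
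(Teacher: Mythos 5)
The paper does not prove this theorem; it is quoted as a result of Ziegler (with a citation) and used only as a contrast to Bourgain's counterexample for degenerate configurations, so there is no in-paper proof to compare against.

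Your outline matches the general architecture of Ziegler's proof (Furstenberg correspondence for $\mathbb{R}^d$-actions, rotation-averaged multiple ergodic averages, characteristic nilfactors, equidistribution), so the framework is the right one. However, there is a genuine gap in where you locate the $\delta$. The theorem claims a single $l_0$ that works for every $\delta > 0$ simultaneously, and this uniformity is precisely what distinguishes Ziegler's statement from the weaker ``for each $\delta$ there is an $l_0(\delta)$'' version. In your plan, the $\delta$-fattening is absorbed into the smoothing $f_\delta$ at scale proportional to $\delta$, so the quantity $\phi_\delta(l)$ whose large-$l$ asymptotics you analyze, along with the proposed lower bound $c(\delta, \mu(E))$, depends on $\delta$; this naturally yields a threshold $l_0(\delta)$ that could blow up as $\delta \to 0$, and since $A_\delta$ shrinks as $\delta$ shrinks, monotonicity runs the wrong way and does not repair this. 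The fix is to run the ergodic asymptotics on an expression that does not see $\delta$ at all: prove that the rotation-averaged multiple ergodic average with the raw indicator $\mathbbm{1}_E$ (no smoothing) converges as $l \to \infty$ to a strictly positive limit, fix $l_0$ once from that single convergent sequence, and only then invoke the continuous correspondence principle, which converts positivity of the ergodic average at a given $l$ into a congruent copy of $lP$ inside $A_\delta$ for every $\delta > 0$. The $\delta$ belongs in the transfer step, not inside the averaged quantity. Beyond this quantifier issue, the heavy lifting you correctly flag --- characteristic factors for rotation-averaged $\mathbb{R}^d$-averages and equidistribution on them --- is exactly what Ziegler's nilfactor theory supplies and is not something your sketch actually constructs.
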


The proof of this theorem is ergodic theoretic in nature, making essential use of deep and difficult results regarding nilflows and the characteristic factors of non-conventional ergodic averages.
It unfortunately does not seem to follow from our methods.

Let us now turn to the second reason for a configuration $P$ on $\R^d$ or $\S^d$ to be non-admissible, namely that it contains $d+1$ points (if it has more than $d+1$ points then it is obviously degenerate).
In this case we cannot apply the same strategy we used to prove the Counting Lemmas, and it is not clear whether they or the analogues of Bourgain's Theorem are true.
We conjecture that they are whenever $d \geq 2$, so that we can remove the cardinality condition from the statement of Bourgain's result and of our `asymptotic independence' Theorem~\ref{thm:main_space} and Theorem~\ref{thm:main_sphere}.

In particular, let us make more explicit the simplest case of this conjecture, which is an obvious question left open since the results of Bourgain and of Furstenberg, Katznelson and Weiss:

\begin{conj}
Let $A \subset \R^2$ be a set of positive upper density and let $u, v, w \in \R^2$ be non-collinear points.
Then there exists $t_0 > 0$ such that for any $t \geq t_0$ the set $A$ contains a configuration congruent to $\{t u, t v, t w\}$.
\end{conj}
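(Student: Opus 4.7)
The plan is to adapt the Fourier-analytic Counting Lemma (Lemma~\ref{lem:counting_space}) to this borderline regime where $|P|=d+1=3$; granting such an extension, the conjecture would follow from Theorem~\ref{thm:main_space} by exactly the reduction already used to derive Theorem~\ref{thmbourgain}. Since the remainder of the machinery in Section~\ref{sec:several_config} uses admissibility \emph{only} through the Counting Lemma, everything reduces to establishing a uniform approximation $|I_{P'}(A) - I_{P'}(A*\Qcal_\delta)| \leq \eta(\delta)\,R^d$ for all measurable $A \subseteq Q(0,R)$ and all $P'$ in a small neighbourhood of $lP$ (the bounds being allowed to depend on $l$, since the induction in Theorem~\ref{thm:main_space} only uses the lemma at finitely many scales at a time).

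After normalising $u = 0$ and applying the telescoping sum trick, the task reduces to bounding
\[ \bigg| \int_{\mathbb{R}^2} \int_{\SO(2)} f_1(x)\, f_2(x+lTv)\, \big(f_3(x+lTw) - f_3*\Qcal_\delta(x+lTw)\big)\,d\mu(T)\,dx \bigg|. \]
In the admissible case of Lemma~\ref{lem:counting_space} one peels off the vertices one at a time, each step averaging over a positive-dimensional sphere and producing Fourier decay as in~(\ref{FourierBound}). Here that iteration collapses at the final step: once $x$ and $Tv$ have been fixed, the point $Tw$ ranges over only \emph{two} points (related by reflection across the line through $x$ and $x+Tv$), and the resulting $S^0$-measure admits no Fourier decay whatsoever. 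My workaround is to avoid peeling one point at a time: write the inner integral as a convolution against the measure $\sigma_l$, defined as the pushforward of Haar measure on $\SO(2)$ under $T \mapsto (lTv,\,lTw) \in \mathbb{R}^4$. Passing to Fourier variables this produces an expression
\[ \iint \widehat{f_1}(-\xi-\eta)\, \widehat{f_2}(\xi)\, \widehat{f_3}(\eta)\, \big(1-\widehat{\Qcal}_\delta(\eta)\big)\, \widehat{\sigma_l}(\xi,\eta)\,d\xi\,d\eta, \]
and a stationary-phase analysis of the oscillatory integral $\widehat{\sigma_l}(\xi,\eta) = \int_{\SO(2)} e^{-il(\xi\cdot Tv + \eta\cdot Tw)}\,d\mu(T)$ should yield decay of order $l^{-1/2}$ away from a two-dimensional ``bad locus'' $F_{v,w} \subset \mathbb{R}^4$ consisting of those $(\xi,\eta)$ for which the phase has degenerate critical points.

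The main obstacle is precisely this two-dimensional degeneracy locus $F_{v,w}$: unlike the zero-dimensional obstruction that halts the admissible iteration, it has positive codimension-two measure in $\mathbb{R}^4$ and cannot be absorbed by a naive Cauchy--Schwarz against the three $L^2$ factors. My preferred attack would be to control its contribution by an $L^4$-Fourier-restriction estimate for the plane $F_{v,w}$, bounding the bad piece by a Gowers $U^2$-type functional of $f_1 \otimes f_2$, and then to prove a companion \emph{structural} statement asserting that extremal $P$-avoiding sets in $\mathbb{R}^2$ must have small $U^2$-norm---this last step being the genuinely new input, not available from the Fourier-analytic machinery of Section~\ref{FourierSection}. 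A possible alternative, closer in spirit to the ergodic methods of Ziegler already mentioned for degenerate configurations, would be to pass to a two-step nilfactor of an appropriately constructed dynamical system and verify directly there that no triangle is avoided at all sufficiently large scales; this would bypass the Fourier obstruction entirely but at the cost of importing substantial nilpotent-structure theory. Either route demands a genuinely novel ingredient, which is presumably why the conjecture remains open.
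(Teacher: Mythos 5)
This statement is a \emph{conjecture} in the paper, not a theorem: the paper offers no proof, and explicitly frames it as an open question left over from the results of Bourgain and of Furstenberg--Katznelson--Weiss. So there is nothing to compare your argument against; what you have written is, correctly, a diagnosis of the obstruction rather than a proof, and your closing admission that ``either route demands a genuinely novel ingredient'' is exactly the paper's own stance.

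Your diagnosis of the obstruction is accurate. You are right that admissibility enters the Euclidean machinery of Section~\ref{sec:several_config} only through the Counting Lemma, so proving the conjecture reduces to extending Lemma~\ref{lem:counting_space} to the borderline case $|P|=d+1=3$, and you are right about why the existing proof breaks: the peeling argument in Lemma~\ref{lem:counting_space} relies at each step on the decay estimate~(\ref{FourierBound}), which for the $j$-th peel gives a power $-(d-j)/2$; for $d=2$, $k=3$ the final peel averages over a $(d-k+1)=0$-dimensional sphere, which is two points, and the Fourier transform of a two-point measure has no decay. This is exactly what the paper says in the concluding remarks (``we cannot apply the same strategy we used to prove the Counting Lemmas''). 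Your reformulation via the pushforward measure $\sigma_l$ on $\mathbb{R}^4$ and the stationary-phase heuristic are a fair account of where the difficulty relocates: the phase $T\mapsto \xi\cdot Tv+\eta\cdot Tw$ on the one-dimensional group $\SO(2)$ degenerates on a codimension-two linear subspace of $(\xi,\eta)$-space, and Cauchy--Schwarz against the three $L^2$ factors cannot absorb a positive-measure set where the multiplier has modulus one. Whether a restriction-type or Gowers-norm argument, or a nilfactor argument \`a la Ziegler, can close that gap is precisely what remains open; you have correctly identified both the scope and the depth of the missing ingredient.

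One small caution on a technical claim: you say the bounds in the extended Counting Lemma are ``allowed to depend on $l$, since the induction in Theorem~\ref{thm:main_space} only uses the lemma at finitely many scales at a time.'' That is true of the way Theorem~\ref{thm:main_space} is invoked, but the \emph{derivation} of Theorem~\ref{thmbourgain} from it then needs the resulting estimate to be usable along an unbounded sequence of scales $t_j$; the quantities there do compose cleanly because the conclusion of Theorem~\ref{thm:main_space} is scale-free, but any replacement Counting Lemma whose constant degraded with $l$ would need to be checked against that step rather than taken for granted. This does not change your overall assessment, which is correct: the conjecture is open and your proposal, as you say yourself, is a programme rather than a proof.
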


Another question we ask is related to a suspected compatibility condition between the Euclidean and spherical settings.
Since $\S^d$ resembles $\R^d$ at small scales, it seems geometrically intuitive that $\m_{\S^d}(t P)$ should get increasingly close to $\m_{\R^d}(P)$ as $t \rightarrow 0$ whenever $P$ is a contractible configuration on $\S^d$.
(It is easy to show that a configuration $P \subset \S^d$ is contractible if and only if it is contained in a $d$-dimensional affine subspace, so we can embed it in $\R^d$.)
We ask whether this intuition is indeed correct, i.e. is it true that $\lim_{t \rightarrow 0} \m_{\S^d}(t P) = \m_{\R^d}(P)$ for all contractible configurations $P \subset \S^d$?

In a more combinatorial perspective, we wish to know whether an analogue of the Hypergraph Removal Lemma holds for forbidden geometrical configurations.
In intuitive terms, the question we ask is whether a measurable set $A$ (either on $\R^d$ or on $\S^d$) which contains `few' copies of some given configuration $P$ can be made $P$-avoiding by removing only `a few' of its
points.\footnote{On the unit sphere, this property would more formally read:
whenever $A \subseteq \S^d$ satisfies $I_P(A) \leq \varepsilon$, there is a subset $E \subset A$ of measure $\sigma(E) \leq o_{\varepsilon \rightarrow 0}(1)$ such that $A \setminus E$ avoids $P$ (where $o_{\varepsilon \rightarrow 0}(1)$ denotes a quantity that goes to zero as $\varepsilon \rightarrow 0$).
Similarly in the Euclidean setting.}
Such a result would then explain geometrical sets having few copies of $P$ as those which are close to a set avoiding this configuration, and it trivially implies the corresponding Supersaturation Theorem;
note that this is a quantitative and stronger version of our zero-measure removal Lemmas~\ref{lem:zero_meas_space} and~\ref{lem:zero_meas_sphere}.

Finally, it would be very interesting to have a way of obtaining good upper bounds for the independence densities of a given configuration or family of configurations.
There are several papers (see~\cite{densityBachoc, completelyPositive} and the references therein) which consider this question in the case of a single two-point configuration, drawing on powerful methods from the theory of conic optimization and representation theory, and it is already quite challenging in this simplest case.
Oliveira and Vallentin also considered the case of several forbidden two-point configurations in Euclidean space~\cite{OliveiraV2010} and in arbitrary compact, connected, rank-one symmetric spaces~\cite{OliveiraV2013};
they use linear and semidefinite programming methods to
prove that the independence density of $n$ distinct two-point configurations decays exponentially with $n$ if their sizes are sufficiently far
apart.\footnote{This holds only if the real dimension of the considered space is 2 or higher.
Oliveira and Vallentin \cite[Section~2]{OliveiraV2013} provide counterexamples to the corresponding result in spaces of real dimension 1.}

We believe that the study of the independence density for higher-order configurations in the optimization setting is also worthwhile, since they serve as model problems for symmetric optimization problems depending on higher-order relations and might prove very fruitful in new methods developed.

\section*{Acknowledgements}

The author would like to thank Fernando de Oliveira Filho, Lucas Slot and Frank Vallentin for many helpful discussions.
We also thank the anonymous reviewer, Fernando de Oliveira Filho, and Frank Vallentin for several suggestions which improved the presentation of this paper.

This work was carried out while the author was a PhD student at the University of Cologne.
It was supported by the European Union’s EU Framework Programme for Research and Innovation Horizon 2020 under the Marie Skłodowska-Curie Actions Grant Agreement No 764759 (MINOA), and by the Dutch Research Council (NWO) as part of the NETWORKS programme (grant no. 024.002.003).

\bibliography{config}
\bibliographystyle{siam}

\Addresses

\end{document}